\documentclass{article}

\usepackage{amsmath,amsthm}
\usepackage{amssymb}

\usepackage{enumerate}
\usepackage{url}

\usepackage{tikz}
\usetikzlibrary{arrows.meta,positioning}
\usepackage{float}

\numberwithin{equation}{section}
\theoremstyle{plain}
\newtheorem{thm}{Theorem}[section]
\newtheorem{prop}[thm]{Proposition}
\newtheorem{cor}[thm]{Corollary}
\newtheorem{lem}[thm]{Lemma}

\newtheorem{ex}[thm]{Example}
\newtheorem{rem}[thm]{Remark}

\newcommand{\ds}{\displaystyle}

\newcommand{\Hp}{\ensuremath{H^2_+}}

\newcommand{\Hm}{\ensuremath{H^2_-}}

\newcommand{\Linf}{\ensuremath{L^\infty}}

\newcommand{\Hinf}{\ensuremath{H^\infty}}
\newcommand{\TT}{\ensuremath{\mathbb{T}}}
\newcommand{\DD}{\ensuremath{\mathbb{D}}}
\newcommand{\CC}{\ensuremath{\mathbb{C}}}

\renewcommand{\AA}{\ensuremath{\mathcal{A}}}

\newcommand{\CCC}{\ensuremath{\mathcal{C}}}
\newcommand{\OO}{\ensuremath{\mathcal{O}}}
\newcommand{\GG}{\ensuremath{\mathcal{G}}}

\newcommand{\RR}{\ensuremath{\mathcal{R}}}

\newcommand{\KK}{\ensuremath{\mathcal{K}}}

\newcommand{\NN}{\ensuremath{\mathcal{N}^+}}
\newcommand{\Cg}{\ensuremath{C_{\ol g}}}

\newcommand{\ol}[1]{\ensuremath{\overline{#1}}}
\newcommand{\annot}[2]{\underbrace{#1}_{\smash[b]{\scriptstyle #2}}}
\newcommand{\bs}{\backslash}
\newcommand{\zero}{\ensuremath{\{0\}}}
\newcommand{\spn}{\ensuremath{\operatorname{span}}}

\newcommand{\kktil}{\ensuremath{\tilde k^\theta_\lambda}}
\newcommand{\kk}{\ensuremath{k^\theta_\lambda}}

\newcommand{\kertg}{\ensuremath{\ker T_g}}

\newcommand{\kerth}{\ensuremath{\ker T_h}}

\begin{document}

\title{Maximal functions and multipliers for subkernels of Toeplitz operators}

\author{M. Cristina Câmara$^{1,}$\thanks{Corresponding author.},
C. Carteiro$^{1}$,
C. Diogo$^{1,2}$
}

\date{}

\maketitle

\begin{center}
$^{1}$Center for Mathematical Analysis, Geometry and Dynamical Systems, Department of Mathematics, Instituto Superior T\'ecnico, Universidade de Lisboa, Av. Rovisco Pais, 1049-001, Lisboa, Portugal\\
$^{2}$ISCTE - Lisbon University Institute, Av. das Forças Armadas, 1649-026, Lisboa, Portugal
\end{center}

\begingroup
\renewcommand\thefootnote{}
\footnotetext{Emails:
\url{cristina.camara@tecnico.ulisboa.pt} (M. Cristina Câmara); 
\url{carlos.carteiro@tecnico.ulisboa.pt} (C. Carteiro);
\url{cristina.diogo@iscte-iul.pt} (C. Diogo).
}
\endgroup

\begin{abstract}
We study the relations between maximal functions in a Toeplitz kernel and those in a subkernel of the same Toeplitz operator, as well as the question of how multipliers between Toeplitz kernels act on subkernels.
We use those relations to obtain model space representations, in particular isometric model space representations and Hayashi's representation, for some important classes of Toeplitz kernels.

{\bf Keywords:} Toeplitz kernels, maximal functions, multipliers, Toeplitz subkernels, model spaces.

{\bf MSC:} 47B35, 47B38, 30H10.
\end{abstract}


\section{Introduction}
\label{sec:intro}

Let $g\in\Linf(\TT) =: \Linf$, where \TT\ is the unit circle. We denote by $T_g$ the Toeplitz operator with symbol $g$, defined on the Hardy space of the unit disk, $\Hp:=H^2(\DD)$, identified as usual with a subspace of $L^2(\TT) =: L^2$, by
\begin{equation}
\label{eq:def_Toeplitz_operator}
T_g f = P^+ gf \quad , \quad f \in \Hp \ ,
\end{equation}
where $P^+$ denotes the orthogonal projection from $L^2$ onto \Hp.
We assume throughout the paper that $g \neq 0$.
Kernels of Toeplitz operators, also called Toeplitz kernels, have attracted a great interest, as they have fascinating properties, appear in numerous applications, and include important classes of analytic functions in the disk, such as model spaces.

Multipliers between Toeplitz kernels play an important role in understanding the structure of these spaces and in describing them.
In \cite{Cro94} Crofoot characterised the multipliers from a model space \emph{onto} another, describing in particular the isometric surjective multipliers between model spaces (see also \cite{Sa07}).
The more general question of characterising the multipliers from one model space \emph{into} another was studied by Fricain, Hartmann and Ross in \cite{FriHarRoss18}, see also \cite{FriRu18}, and, motivated by their work, C\^amara and Partington characterised in \cite{CP18_multipliers} the multipliers between two Toeplitz kernels in general.

Multipliers from a model space onto a Toeplitz kernel, allowing to describe the latter in the form
\begin{equation}
\label{eq:model_space_representation}
\kertg = w K_\theta \quad , \quad \text{with } \theta \text{ inner,}
\end{equation}
are particularly important.
The equality \eqref{eq:model_space_representation} is called a {\em model space representation} of \kertg;
if multiplication by $w$ is an isometry from $K_\theta$ onto \kertg, \eqref{eq:model_space_representation} is called an {\em isometric model space representation} (\cite{CCD25}).
Such a representation always exists.
Indeed Hayashi showed, in \cite{Ha86}, that the kernel of a Toeplitz operator, if non-trivial, can be written as
\begin{equation}
\label{eq:Hayashi_representation}
\kertg = u K_{z\gamma}
\end{equation}
where $u$ is outer, $\gamma$ is inner and $u$ multiplies the model space isometrically onto \kertg.
Hayashi's representation \eqref{eq:Hayashi_representation} is unique up to a unimodular constant.

The study of multipliers between Toeplitz kernels brings to light a strong connection between several distinct topics, such as Carleson measures, Smirnov spaces and maximal functions.
The latter were introduced in \cite{CP14}. They are elements of a Toeplitz kernel which cannot belong to any ``smaller'' Toeplitz kernel, i.e., functions $f^M \in \kertg$ such that, for any $h\in\Linf$,
\begin{equation}
f^M \in \kerth \implies \kertg \subset \kerth\ .
\end{equation}
We say that \kertg\ is the minimal Toeplitz kernel for $f^M$ and we write
\begin{equation}
\kertg = \KK_{min}(f^M)\ .
\end{equation}
It was shown in \cite{CP14} that in every non-trivial Toeplitz kernel there is a maximal function, which uniquely defines that kernel, and, conversely, for every $f\in\Hp$ there exists a minimal Toeplitz kernel to which $f$ belongs.

If $\AA$ is a Toeplitz kernel and $\AA \subset \kertg$, we say that \AA\ is a {\em subkernel of $T_g$}.
In this paper we study the relations between maximal functions in a Toeplitz kernel and maximal functions in subkernels of the same Toeplitz operator, and how multipliers between two Toeplitz kernels act on subkernels.
A crucial role is played by inner-outer factorisation of maximal functions and related factorisations of inner functions.
We use those relations to obtain model space representations, in particular isometric model space representations and Hayashi's representation, for some important classes of Toeplitz kernels, with an emphasis on Toeplitz kernels contained in model spaces.


\section{Toeplitz subkernels}
\label{sec:Toeplitz_subkernels}

The question when is a Toeplitz kernel contained in another Toeplitz kernel can be reduced to the question of characterising the multipliers between two Toeplitz kernels.
Note that, while multipliers from one model space into another must be in \Hp\ (\cite{FriHarRoss18}), multipliers between general Toeplitz kernels do not necessarily satisfy that condition;
they must however belong to the Smirnov class \NN\ (\cite{CP18_multipliers}).
They are characterised in Theorem \ref{thm:multiplicadores_into_kernels} below.
Let $\kertg\neq\zero$.
We say that $w\in\CCC(\kertg)$ whenever $|w|^2dm$ is a Carleson measure for \kertg, that is, $w\kertg \subset L^2$.

\begin{thm}[{\cite[Theorem 2.5]{CP18_multipliers}}]
\label{thm:multiplicadores_into_kernels}
Let $w\in\NN$.
Then the following are equivalent:
\begin{enumerate}[(i)]
\item $w \ker T_g \subset \ker T_h$;

\item $w\in\CCC(\kertg)$ and $\ds w h/g \in \ol\NN$;

\item $w\in\CCC(\kertg)$ and for some (and hence every) maximal function $\ds f^M_g$ in $\ker T_g$ we have $\ds w f^M_g \in \ker T_h$.
\end{enumerate}
\end{thm}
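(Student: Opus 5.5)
We prove (i)$\Rightarrow$(iii)$\Rightarrow$(ii)$\Rightarrow$(i). Throughout we use the standard description of Toeplitz kernels: $f\in\kertg$ if and only if $f\in\Hp$ and $gf\in\ol{z\Hp}=:\Hm$, i.e. $gf=\ol{z}\,\ol{f_-}$ with $f_-\in\Hp$.

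\emph{(i)$\Rightarrow$(iii).} If $w\kertg\subset\kerth\subset\Hp\subset L^2$, then $w\in\CCC(\kertg)$ by definition. Moreover every maximal function $f^M_g\in\kertg$ satisfies $wf^M_g\in\kerth$. This gives (iii) for every maximal function.

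\emph{(iii)$\Rightarrow$(ii).} Let $f^M_g$ be a maximal function with $wf^M_g\in\kerth$. Write
\[
gf^M_g=\ol{z}\,\ol{\phi},\qquad hwf^M_g=\ol{z}\,\ol{\psi},\qquad \phi,\psi\in\Hp .
\]
Since $g\neq0$ and $f^M_g\neq0$ a.e. on $\TT$, dividing the two relations gives
\[
\frac{wh}{g}=\frac{\ol{\psi}}{\ol{\phi}} .
\]
To conclude $wh/g\in\ol\NN$, we must show that $\phi$ is outer; then $\psi/\phi\in\NN$. This is where maximality is used. Suppose $\phi=\theta\phi_0$ with $\theta$ a nonconstant inner function. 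Then
\[
g\theta f^M_g=\ol{z}\,\ol{\phi_0}\in\Hm ,
\]
so $f^M_g\in\ker T_{\theta g}$. Maximality then forces $\kertg\subset\ker T_{\theta g}$. The known characterisation of maximal functions (from \cite{CP14}) says that $f\in\kertg$ is maximal precisely when $gf=\ol{z}\,\ol{p}$ with $p$ outer. I would invoke this characterisation directly, rather than re-deriving it from the inclusion $\kertg\subset\ker T_{\theta g}$, since I expect that derivation to be the main obstacle. Granting it, $\phi$ is outer and (ii) follows.

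\emph{(ii)$\Rightarrow$(i).} Let $f\in\kertg$ and write $gf=\ol{z}\,\ol{f_-}$ with $f_-\in\Hp$. Then
\[
hwf=\frac{wh}{g}\,gf=\frac{wh}{g}\,\ol{z}\,\ol{f_-}\in\ol{z}\,\ol\NN,
\]
because $wh/g\in\ol\NN$, $f_-\in\Hp\subset\NN$, and $\NN$ is an algebra. Also $hwf\in L^2$, since $w\in\CCC(\kertg)$ gives $wf\in L^2$ and $h\in\Linf$. By Smirnov's theorem, $\NN\cap L^2=\Hp$, so $\ol{z}\,\ol\NN\cap L^2=\Hm$ and hence $hwf\in\Hm$.

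It remains to show $wf\in\Hp$. We have $wf\in\NN$, since $w\in\NN$ and $f\in\Hp$, and $wf\in L^2$ by the Carleson condition. Smirnov's theorem again gives $wf\in\Hp$. Combined with $hwf\in\Hm$, this says $wf\in\kerth$, which proves (i).

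Finally, the clause ``for some (and hence every)'' in (iii) follows from the cycle of implications. If (iii) holds for some maximal function, then (i) holds, and the argument for (i)$\Rightarrow$(iii) gives (iii) for every maximal function.
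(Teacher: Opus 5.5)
Your proof is correct and is essentially the argument of the cited result in \cite{CP18_multipliers}, which this paper quotes without proof: (i)$\Rightarrow$(iii) is immediate; (iii)$\Rightarrow$(ii) comes from the characterisation $gf^M_g=\ol z\,\ol{\OO}$ with $\OO$ outer (Proposition \ref{prop:max_func_characterization}); and (ii)$\Rightarrow$(i) comes from Smirnov's theorem $\NN\cap L^2=\Hp$, applied both to $wf$ and to $\ol z\,\ol{hwf}$. The only thing you leave implicit is the standing assumption $\kertg\neq\zero$, which is needed for maximal functions to exist and also guarantees $g\neq 0$ a.e., so your divisions by $g$ are legitimate.
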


For $w=1$, this provides necessary and sufficient conditions for a Toeplitz kernel to be included in another Toeplitz kernel.

\begin{cor}[{\cite[Proposition 2.16]{CP18_multipliers}}]
\label{cor:kernel.subset.of.another.kernel}
$\ker T_g \subset \ker T_G \iff {G/g \in \ol\NN} \iff$ there exists a maximal function $f^M_g$ in $\ker T_g$ such that $f^M_g \in \ker T_G$.
\end{cor}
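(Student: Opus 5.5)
Setting $w=1$ in Theorem \ref{thm:multiplicadores_into_kernels}, with $h=G$, gives the whole corollary almost immediately. The constant function $w=1$ belongs to $\NN$. The statement is trivial when $\kertg=\zero$, and the maximal function in the third condition presupposes that $\kertg$ is non-trivial, which is the standing assumption under which Theorem \ref{thm:multiplicadores_into_kernels} was stated. So we assume $\kertg\neq\zero$. The Carleson condition $1\in\CCC(\kertg)$ then holds automatically, because $1\cdot\kertg=\kertg\subset\Hp\subset L^2$.

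\textbf{Reading off the equivalences.}
\begin{enumerate}[(a)]
\item Condition (i) of Theorem \ref{thm:multiplicadores_into_kernels} becomes $\kertg\subset\ker T_G$.
\item Condition (ii) becomes $G/g\in\ol\NN$, since the Carleson part is automatic.
\item Condition (iii) becomes: for some (and hence every) maximal function $f^M_g$ of $\kertg$, $f^M_g\in\ker T_G$.
\end{enumerate}

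\textbf{The existential wording.} The corollary asks only for the existence of such a maximal function. This is the ``for some'' reading of (iii). The theorem asserts that the ``some'' and ``every'' readings coincide and are equivalent to (i), so no extra argument is needed. We only need to know that $\kertg$ has at least one maximal function, which holds by \cite{CP14} since $\kertg\neq\zero$.

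\textbf{Possible subtlety.} One point to check is how the quotient $G/g$ is interpreted when $g$ vanishes on a set of positive measure. Non-triviality of $\kertg$ forces $g\neq0$ a.e.: a non-zero $f^+\in\kertg$ satisfies $gf^+\in\ol{H^2_0}$, and $f^+\neq0$ a.e. Hence the quotient makes sense exactly as in the theorem. Beyond this, no real obstacle is expected; the proof is a direct specialisation.
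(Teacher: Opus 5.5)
Your proposal is correct and follows the paper's own route: the corollary is obtained by setting $w=1$, $h=G$ in Theorem \ref{thm:multiplicadores_into_kernels}, with $1\in\CCC(\kertg)$ automatic. One small correction is needed: the case $\kertg=\zero$ is not ``trivial'' but genuinely excluded, because then $\kertg\subset\ker T_G$ always holds, whereas $G/g\in\ol\NN$ can fail (e.g.\ $g=1$, $G=z$, where $z\notin\ol\NN$) and no maximal function exists; so the hypothesis $\kertg\neq\zero$ is required, not a harmless reduction.
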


\begin{cor}[{\cite{CP18_multipliers}}]
\label{cor:subkernel_galpha}
$\ker T_g \subset \ker T_G \iff \ker T_g = \ker T_{G\alpha}\ $ for some inner function $\alpha \iff g = G\alpha \ol{O_1} / \ol{O_2}$ with $O_1,O_2 \in\Hinf$ outer.
\end{cor}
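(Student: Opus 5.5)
We prove the three conditions equivalent via the cycle (a)$\Rightarrow$(c)$\Rightarrow$(b)$\Rightarrow$(a). Here (a) is $\ker T_g\subset\ker T_G$, (b) is $\ker T_g=\ker T_{G\alpha}$ for some inner $\alpha$, and (c) is $g=G\alpha\ol{O_1}/\ol{O_2}$ with $O_1,O_2\in\Hinf$ outer. Throughout we rely on Corollary \ref{cor:kernel.subset.of.another.kernel}, which translates inclusion of kernels into the condition that a quotient of symbols lies in $\ol\NN$. We also use the standard facts that $\ker T_{g}$ is unchanged when $g$ is multiplied by $\ol{O}_1/\ol{O}_2$ with suitable outer factors, and that every element of $\NN$ factors as (inner)$\cdot O_2/O_1$ with $O_1,O_2\in\Hinf$ outer.

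For (a)$\Rightarrow$(c), Corollary \ref{cor:kernel.subset.of.another.kernel} gives $G/g\in\ol\NN$, so $\ol{G/g}\in\NN$. Since $g\ne0$ and $G\neq 0$ (the latter because $\ker T_G\supset\ker T_g\ne\{0\}$ whenever the statement is nontrivial; if $\ker T_g=\zero$ one treats this case separately), the function $\ol{G/g}$ is a nonzero element of $\NN$. We write it as $\ol{G}/\ol{g}=\beta\, O_2/O_1$ with $\beta$ inner and $O_1,O_2\in\Hinf$ outer. Conjugating gives $G/g=\ol\beta\,\ol{O_2}/\ol{O_1}$, hence $g=G\beta\ol{O_1}/\ol{O_2}$, which is (c) with $\alpha=\beta$. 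The only care needed is that a ratio of two $\Linf$ functions which lies in $\NN$ admits this factorisation with bounded outer parts; this is the Smirnov class definition (quotient of $\Hinf$ functions with outer denominator) combined with inner–outer factorisation of the numerator.

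For (c)$\Rightarrow$(b), we show $\ker T_g=\ker T_{G\alpha}$ by proving both inclusions with Corollary \ref{cor:kernel.subset.of.another.kernel}. We need $G\alpha/g=\ol{O_2}/\ol{O_1}\in\ol\NN$ and $g/(G\alpha)=\ol{O_1}/\ol{O_2}\in\ol\NN$. Both hold because quotients of outer $\Hinf$ functions lie in $\NN$.

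For (b)$\Rightarrow$(a), take $f\in\ker T_{G\alpha}$, so $G\alpha f\in\ol{H^2_0}$. Then $Gf=\ol\alpha\,(G\alpha f)\in\ol{H^2_0}$, since multiplying by $\ol\alpha\in\ol{\Hinf}$ preserves $\ol{H^2_0}$. Hence $\ker T_{G\alpha}\subset\ker T_G$. Alternatively, Corollary \ref{cor:kernel.subset.of.another.kernel} applies directly, since $G/(G\alpha)=\ol\alpha\in\ol\NN$.

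The step I expect to need most care is the factorisation in (a)$\Rightarrow$(c), namely ensuring that $O_1,O_2$ can be chosen bounded rather than merely in $\NN$. I would handle it by writing $\ol{G/g}=F_1/F_2$ with $F_i\in\Hinf$ and $F_2$ outer, then splitting $F_1=\beta O_2$ into inner and outer parts, and setting $O_1=F_2$.
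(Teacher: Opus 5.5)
Your proof is correct. The paper gives no proof of its own: it cites \cite{CP18_multipliers} and presents the result as a consequence of Corollary \ref{cor:kernel.subset.of.another.kernel}, and your derivation is the one that placement indicates. You read off $G/g\in\ol\NN$ and factor $\ol G/\ol g$ as an inner function times a quotient of bounded outer functions. You get (c)$\Rightarrow$(b) from the reverse direction of that corollary, and (b)$\Rightarrow$(a) is trivial.

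There are two caveats.

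\begin{itemize}
\item The case $\ker T_g=\zero$ cannot be ``treated separately''. It has to be excluded by the standing assumption $\ker T_g\neq\zero$ of Section \ref{sec:Toeplitz_subkernels}. For instance, with $g=1$ and $G=z$ the first two conditions hold but the third fails: $|O_1|=|O_2|$ forces $O_1=cO_2$, and then $\alpha=c\,\ol z$, which is not inner.
\item $G\neq 0$ comes from the paper's blanket convention on symbols. It does not follow from $\ker T_G\supset\ker T_g\neq\zero$, since that inclusion does not rule out $G=0$.
\end{itemize}
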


In this paper we will be particularly interested in Toeplitz kernels contained in model spaces.
A natural question regarding subkernels thus arises: which Toeplitz kernels contain some model space and, vice-versa, which Toeplitz kernels are contained in some model space?
We have the following.

\begin{prop}[{\cite{CP18_multipliers}}]
\label{prop:subkernel_modelspace}
Let $\theta$ be a non-constant inner function.
\begin{enumerate}[(i)]
\item If $K_\theta \subset \kertg$, then $\kertg$ is a model space.

\item If $\kertg \subset K_\theta$, then $\kertg = \ker T_{\ol\theta \alpha}\ $ for some inner function $\alpha$;
if $|g|=1$ then $g=\ol\theta \alpha$ for some inner function $\alpha$.
\end{enumerate}
\end{prop}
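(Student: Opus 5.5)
Both parts follow from Corollary \ref{cor:kernel.subset.of.another.kernel} and Corollary \ref{cor:subkernel_galpha}, together with the standard identification $K_\theta = \ker T_{\ol\theta}$.

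For (i), suppose $K_\theta \subset \kertg$, i.e.\ $\ker T_{\ol\theta}\subset \kertg$. I would apply Corollary \ref{cor:subkernel_galpha} with the roles reversed: the smaller kernel is $\ker T_{\ol\theta}$ and the larger is $\ker T_g$. This gives $\ol\theta = g\alpha \ol{O_1}/\ol{O_2}$ with $\alpha$ inner and $O_1,O_2\in\Hinf$ outer, hence
\[
g = \ol{\alpha\theta}\, \frac{\ol{O_2}}{\ol{O_1}}.
\]
The key step is the observation that for outer $O_1,O_2\in\Hinf$ one has $\ker T_{\ol\beta\, \ol{O_2}/\ol{O_1}} = \ker T_{\ol\beta}$ whenever that quotient is bounded. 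Indeed, $f\in \ker T_{\ol\beta \ol{O_2}/\ol{O_1}}$ means $\ol\beta f \ol{O_2}/\ol{O_1}\in \ol{z\Hp}$-type space ($\ol{z\NN}$ after a Smirnov argument). Multiplying or dividing by the conjugate outer factors preserves membership in $\ol{z\NN}$, and $\ol\beta f\in L^2$. By Smirnov's theorem ($\NN\cap L^2=\Hp$), this is equivalent to $\ol\beta f\in \ol{z\Hp}$.

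To avoid redoing that computation, I could instead use Corollary \ref{cor:kernel.subset.of.another.kernel} twice. First, $(\ol{\alpha\theta})/g = \ol{O_1}/\ol{O_2}\in\ol\NN$ gives $\kertg\subset \ker T_{\ol{\alpha\theta}} = K_{\alpha\theta}$. Second, $g/\ol{\alpha\theta} = \ol{O_2}/\ol{O_1}\in \ol\NN$ gives the reverse inclusion. Hence $\kertg = K_{\alpha\theta}$ is a model space. This two-inclusion route is clean and is what I would write.

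For (ii), suppose $\kertg\subset K_\theta=\ker T_{\ol\theta}$. Corollary \ref{cor:subkernel_galpha} directly yields $\kertg=\ker T_{\ol\theta\alpha}$ for some inner $\alpha$, which is the first claim.

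For the unimodular case, Corollary \ref{cor:kernel.subset.of.another.kernel} gives $\ol\theta/g\in\ol\NN$, i.e.\ $\theta\ol g = \theta/g \in \NN$ since $|g|=1$. So $\alpha:=\theta g$ lies in $\NN$, is unimodular on $\TT$, and is therefore in $\NN\cap L^\infty = \Hinf$. A unimodular $\Hinf$ function is inner, so $g=\ol\theta\alpha$.

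The main subtle point is the Smirnov-class bookkeeping, namely that $\NN\cap L^\infty=\Hinf$ and the direction of the quotients in the corollaries. Everything else is direct invocation of the earlier results.
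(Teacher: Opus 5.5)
Your argument is correct. The paper gives no proof of this proposition; it is quoted from the cited work. Your derivation from Corollaries \ref{cor:kernel.subset.of.another.kernel} and \ref{cor:subkernel_galpha} together with Smirnov's theorem ($\NN\cap\Linf=\Hinf$) is the natural one. In (i), the two inclusions could be replaced by a single appeal to Corollary \ref{cor:kernels_iguais}, since $g/\ol{\alpha\theta}=\ol{O_2}/\ol{O_1}$.

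There are two small points.

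First, conjugating $\ol\theta/g\in\ol\NN$ gives $\theta/\ol g=\theta g\in\NN$, not $\theta\ol g$. Your next line, $\alpha:=\theta g$, is the correct one, so this is only a slip.

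Second, the step $\kertg\subset K_\theta\Rightarrow\ol\theta/g\in\ol\NN$ uses the direction of Corollary \ref{cor:kernel.subset.of.another.kernel} that needs $\kertg\neq\zero$. The unimodular conclusion is actually false without this hypothesis. Take $\theta=z$ and $g=\ol h/h$ with $h=1+z^2/2\in\GG\Hinf$. Then $T_g=T_{\ol h}T_{1/h}$ with $T_{\ol h}$ invertible, so $T_g$ is injective and $\kertg=\zero\subset K_z$. But $zg\notin\Hinf$, because $z\ol h=z+\ol z/2\notin\Hinf$.

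So state the hypothesis $\kertg\neq\zero$ explicitly; it is the standing assumption at the start of Section \ref{sec:Toeplitz_subkernels}. With it, every step of your proof goes through.
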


It follows from Proposition \ref{prop:subkernel_modelspace}(ii) that not all Toeplitz kernels can be contained in a model space.
Indeed, to every Toeplitz kernel one can associate a unimodular symbol (see \eqref{eq:unimod_symbol_from_max_func}).
If $\kertg \subset K_\theta$ with $|g|=1$, then we must have that $\theta g = \alpha$ for some inner function $\alpha$ and this may not be possible, as for instance if $g=\ol z^{3/2}$ ($\kertg = \spn \{ (1+z)^{1/2} \}$).
In particular, there are finite dimensional Toeplitz kernels that are not contained in any model space.

We can however, characterise some classes of Toeplitz operators whose kernels are contained in model spaces.
This is the case of Toeplitz operators with symbol in $\ol\Hinf$ (whose kernels are themselves model spaces) and all Toeplitz operators with rational symbols, as we show next.

\begin{prop}
\label{prop:rational_symbol_contained_model_space}
Let \RR\ denote the space of all rational functions without poles on \TT\ and let $g\in\RR$.
Then $\kertg\subset K_\theta$ for some inner function $\theta$.
\end{prop}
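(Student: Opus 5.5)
Let $g\in\RR$ with $g\neq 0$. The plan is to factor $g$ explicitly as a product of a Blaschke-type quotient and functions that do not affect the kernel, and then apply Corollary \ref{cor:kernel.subset.of.another.kernel}. We may assume \kertg\ is non-trivial; otherwise $\kertg=\zero\subset K_\theta$ for any $\theta$.

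First, factor $g$ as
\[
g = \frac{p_+\, p_0}{q_+\, q_-},
\]
where:
\begin{itemize}
\item $q_+$ has its zeros (the poles of $g$) in $\DD$;
\item $q_-$ has its zeros outside $\ol\DD$;
\item $p_0$ has all its zeros on \TT;
\item $p_+$ collects the zeros of $g$ in $\DD$ times a polynomial with zeros outside $\ol\DD$, up to a constant.
\end{itemize}
Write $q_+=c\,B\,r$, with $B$ the finite Blaschke product whose zeros are those of $q_+$ (with multiplicity) and $r$ a polynomial with no zeros in $\ol\DD$. Concretely, each factor $z-a$ with $|a|<1$ equals $(z-a)/(1-\ol a z)$ times $(1-\ol a z)$.

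Similarly, each zero factor $z-b$ with $|b|<1$ of the numerator can be written as an inner factor times an invertible function. Each factor $z-b$ with $|b|>1$ is already invertible in \Hinf. Thus
\[
g = \ol{B}\,\alpha\, \frac{p_0}{h}\, u,
\]
where $\alpha$ is a finite Blaschke product (from the zeros of $g$ in $\DD$), $u$ is invertible in $\Hinf$ (a rational function with no zeros or poles in $\ol\DD$), and $h=q_-$ up to constants, also invertible in \Hinf. Here we used $1/B=\ol B$ on \TT.

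Next, set $\theta=B$ (or $\theta=z^n$ type). We want $\kertg\subset K_{B}=\ker T_{\ol B}$. By Corollary \ref{cor:kernel.subset.of.another.kernel} it suffices to show $\ol B/g\in\ol\NN$. We have
\[
\frac{\ol B}{g}=\frac{1}{\alpha\, p_0\, u'}
\]
with $u'$ invertible in \Hinf. This is not obviously in $\ol\NN$, because $1/\alpha=\ol\alpha$ is fine but $1/p_0$ is problematic. So instead I would choose $\theta = B\alpha'$ for a suitable inner $\alpha'$. The key observation is that on \TT\ a factor $z-\zeta$ with $|\zeta|=1$ satisfies $z-\zeta = -\zeta z\,\ol{(z-\zeta)}$. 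Hence $p_0 = c\, z^{m}\,\ol{p_0}$, where $m=\deg p_0$.

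This gives
\[
\ol B/g = \ol\alpha\, \ol z^{m}\, \frac{1}{c\,\ol{p_0}\,u'}.
\]
Now $\ol\alpha\,\ol z^m\in\ol\Hinf$ and $1/u'\in\ol{\NN}$ after conjugation? Not quite: $u'$ is invertible in \Hinf, not $\ol\Hinf$. To fix this, I would absorb $u'$ by noting that $u'$ is rational with no zeros or poles on $\ol\DD$, so $u'=\ol{v}\,\beta$-type rewriting is needed. Using the same trick, a factor $z-b$ with $|b|>1$ equals $-b(1-z/b)$, and $\ol{(z-b)}$ is outer in $\ol{\Hinf}$ sense only up to a factor $\ol z$. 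The cleanest way is to say: any rational $u$ with no zeros or poles in $\ol\DD$ satisfies $u=z^k\,\ol{v}$ on \TT, with $v$ rational without zeros or poles in $\ol\DD$... This is the main obstacle: I need to track these powers of $z$, which may be positive or negative, and ensure the final quotient lies in $\ol\NN$.

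To handle this, instead of forcing $\theta=B$, I would take $\theta=z^N B$ with $N$ large. Then $\ol\theta/g = \ol z^{N}\cdot(\text{rational function with poles only on }\TT\text{ or outside }\ol\DD\text{, times }\ol z^{\text{bounded power}})$. Equivalently, I would conjugate and check that $\ol{\ol\theta/g}=\theta/\ol g$ is in \NN. Here $\ol g(z)=\ol{g(1/\ol z)}$ on \TT\ is rational, and $z^N B/\ol g$ is a rational function. A rational function belongs to \NN\ iff it has no poles in $\DD$, since poles on \TT\ are allowed: $1/(z-\zeta)$ is a quotient of \Hinf\ functions with outer denominator.

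The poles of $B/\ol g$ in $\DD$ are:
\begin{itemize}
\item the zeros of $\ol g$ in $\DD$, which are the reflections of the zeros of $g$ outside $\ol\DD$;
\item the point $0$.
\end{itemize}
The first kind must be cancelled, and they are not cancelled by $B$. This indicates the right choice is actually $\theta = z^N B^*$, where $B^*$ is the Blaschke product with zeros at the reflections $1/\ol b$ of the zeros $b$ of $g$ outside $\ol\DD$, and $N$ is large enough to kill the pole at $0$. With this choice $\theta/\ol g$ is rational with no poles in $\DD$, so it lies in \NN. Hence $\ol\theta/g\in\ol\NN$, and Corollary \ref{cor:kernel.subset.of.another.kernel} gives $\kertg\subset K_\theta$, with $\theta = z^N B^*$ a finite Blaschke product. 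The main work is this pole bookkeeping.
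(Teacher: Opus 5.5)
Your last paragraph is a correct proof, but it takes a different route from the paper.

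\textbf{Your route.} You check the sufficient condition of Corollary~\ref{cor:kernel.subset.of.another.kernel} directly. Take $\theta=z^NB^*$, where $B^*$ has zeros at the reflections $1/\ol b$ of the zeros $b$ of $g$ in $|z|>1$, and $N$ is at least the order of vanishing of $g$ at $\infty$. On \TT, the function $\theta/\ol g$ is a rational function with no poles in \DD, so it lies in \NN. Hence $\ol\theta/g\in\ol\NN$ and $\kertg\subset K_\theta$. You only use the sufficiency direction. It holds even when $\kertg=\zero$, since $\NN\cap L^2=\Hp$, so your preliminary reduction is harmless but unnecessary.

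\textbf{The paper's route.} The paper manipulates the symbol instead. It uses \cite[Corollary 6.3]{CP24} to replace the zeros on \TT\ by $z^{n_\TT}$, discards factors in $\GG\ol\Hinf$ and $\GG\Hinf$, and obtains $\kertg=\ker T_{\ol{B_1}B_2z^N}$. The inclusion in $K_{B_1}$ or $K_{B_1z^{|N|}}$ is then read off. If you take your exponent as small as possible, your $\theta$ is exactly the paper's: $B^*=B_1$, and your exponent is $\max(0,-N)$ with the paper's $N$.

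\textbf{What each buys.} Your argument is shorter and needs nothing special about zeros of $g$ on \TT, since they only produce poles of $\theta/\ol g$ on \TT, which \NN\ allows. However, it gives only the inclusion. The paper's computation also yields the model space representation $\kertg=\frac{Q_+}{P_+}K_{z^{n-n_\TT}}$ and a criterion for triviality. It also gives the factorisation $\ol{B_1}\,(B_2z^N)$ with coprime inner factors, which the next corollary uses, via Proposition~\ref{prop:minimal_model_space_condition}, to show these model spaces are minimal.

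\textbf{Write-up.} Delete the exploratory attempts with $\theta=B$ and $\theta=z^NB$. In particular, the claim that $u=z^k\ol v$ with $v$ free of zeros and poles in $\ol\DD$ is false, e.g.\ for $u=z-b$ with $|b|>1$. The last paragraph alone is the proof.
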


\begin{proof}
First note that, if $g$ has $n_\TT$ zeroes on \TT\ (counting multiplicities), we can write
\begin{equation}
\kertg = \ker T_{\frac{P}{Q}z^{n_\TT}}
\end{equation}
where $P$ and $Q$ are relatively prime polynomials without zeroes on \TT\ by \cite[Corollary 6.3]{CP24}.
Let us write
\begin{equation}
P = P_- P_+ \quad , \quad Q = Q_- Q_+ \ ,
\end{equation}
where $P_-,Q_-$ only have zeroes in \DD\ and $P_+,Q_+$ only have zeroes in the exterior of \DD, denoted $\DD_-$.
Let
\begin{equation}
\label{eq:n_def}
n = \deg Q_- - \deg P_- \ ;
\end{equation}
thus,
\begin{equation}
\frac{P}{Q}z^{n_\TT} = \annot{
\left( \frac{P_-}{Q_-}z^n \right)}{\in\GG\ol\Hinf}
z^{n_\TT - n} \annot{
\frac{P_+}{Q_+}}{\in\GG\Hinf},
\end{equation}
where we denote by $\GG\AA$ the group of invertible elements in a unital algebra \AA.

If $n_\TT - n \geq 0$, then $\kertg = \ker T_{\frac{P}{Q}z^{n_\TT}} = \zero$;
if $n_\TT - n < 0$, then
\begin{equation}
\kertg = \ker T_{\frac{P}{Q}z^{n_\TT}} = \ker T_{\frac{P_+}{Q_+}z^{n_\TT -n}} = \frac{Q_+}{P_+} K_{z^{n - n_\TT}} \neq\zero.
\end{equation}
Now, $P_+$ and $Q_+$ are products of factors of the form $z - 1/ \ol\lambda$, with $\lambda \in \DD$, and
\begin{equation}
z-\frac{1}{\ol\lambda} = - \frac{1}{\ol\lambda} \ 
\annot{\frac{1-\ol\lambda z}{z-\lambda}}{\ol{B_\lambda}} \ 
\annot{\frac{z-\lambda}{z}}{\in\GG\ol\Hinf} \ 
z \ ,
\end{equation}
where we denote by $B_\lambda$ the Blaschke factor
\begin{equation}
B_\lambda = \frac{z-\lambda}{1 - \ol\lambda z}\ .
\end{equation}

Thus, there are finite Blaschke products $B_1$ and $B_2$ such that 
\begin{align}
P_+ &= \ol{B_1} z^{n_1} \ol{h_1} \quad , \quad h_1\in\GG\Hinf\ , \label{eq:B1_def}\\
Q_+ &= \ol{B_2} z^{n_2} \ol{h_2} \quad , \quad h_2\in\GG\Hinf\ ,
\end{align}
where $n_1$ and $n_2$ are the number of zeroes of $P_+$ and $Q_+$ (in $\DD_-$), respectively.
Therefore
\begin{equation}
\kertg = \ker T_{\frac{P_+}{Q_+}z^{n_\TT -n}} = \ker T_{\ol{B_1}B_2 z^N}\ ,
\end{equation}
with
\begin{equation}
\label{eq:N_def}
N = n_\TT - n + n_1 - n_2\ .
\end{equation}
If $N\geq 0$, then $\kertg\subset \ker T_{\ol{B_1}} = K_{B_1}$;
if $N<0$, then $\kertg\subset \ker T_{\ol{B_1}\ol z^{|N|}} = K_{B_1 z^{|N|}}$.
\end{proof}

\begin{rem}
It is easy to see that a function $f\in\Hp$ belongs to some model space if and only if $\ol f$ is of bounded type, i.e., $\ol f = f_1 / f_2$ with $f_1,f_2 \in \Hinf$ (\cite{Ne70}).
Rational functions are of bounded type, hence each rational function in \Hp\ belongs to a model space.
Proposition \ref{prop:rational_symbol_contained_model_space} states that, when $g\in\RR$, there exists a model space to which all functions in \kertg, which are rational functions, belong.
\end{rem}

The model spaces in the last paragraph of the proof of Proposition \ref{prop:rational_symbol_contained_model_space} are in fact minimal, as a consequence of the following propositions.
We say that $K_\theta$ is the {\em minimal model space containing $\kertg$} if $\kertg \subset K_\theta$ and for any inner function $\beta$ such that $\kertg \subset K_\beta$, we have $K_\theta \subset K_\beta$.

\begin{prop}
\label{prop:minimal_model_space_condition}
Let $|g|=1$, $\kertg\neq\zero$.
Then the following are equivalent:
\begin{enumerate}[(i)]
\item There exists a minimal model space containing \kertg;

\item $g = \ol\theta \alpha$ with $\theta,\alpha$ inner and such that $\gcd (\theta, \alpha) = 1$.
\end{enumerate}
In that case, $K_\theta$ is the minimal model space containing \kertg.
\end{prop}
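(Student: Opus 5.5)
The plan is to prove (ii)$\Rightarrow$(i) together with the final claim by direct comparison of symbols, and (i)$\Rightarrow$(ii) by producing a factorisation $g=\ol\theta\alpha$ from the containment and then removing any common inner factor. Both directions rest on Proposition~\ref{prop:subkernel_modelspace}(ii), which says that for unimodular $g$ any containment $\kertg\subset K_\beta$ forces $g=\ol\beta\alpha'$ for some inner $\alpha'$.

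For (ii)$\Rightarrow$(i), assume $g=\ol\theta\alpha$ with $\gcd(\theta,\alpha)=1$.
\begin{enumerate}[(a)]
\item Containment: if $f\in\kertg$ then $\ol\theta\alpha f\in\Hm$, so $\ol\theta f\in\ol\alpha\,\Hm\subset\Hm$. Hence $f\in K_\theta$ and $\kertg\subset K_\theta$.
\item Minimality: let $\beta$ be inner with $\kertg\subset K_\beta$. By Proposition~\ref{prop:subkernel_modelspace}(ii), $g=\ol\beta\alpha'$ for some inner $\alpha'$. Then $\ol\theta\alpha=\ol\beta\alpha'$, so $\alpha\beta=\theta\alpha'$.
\item Divisibility: $\theta$ divides $\alpha\beta$ and $\gcd(\theta,\alpha)=1$, so $\theta$ divides $\beta$. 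I would justify this cancellation through the standard factorisation of inner functions into Blaschke and singular parts, handling the Blaschke zeros and the singular measures separately. This gives $K_\theta\subset K_\beta$.
\end{enumerate}
This proves (i) and also the final claim that $K_\theta$ is the minimal model space containing $\kertg$.

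For (i)$\Rightarrow$(ii), let $K_\beta$ be the minimal model space containing $\kertg$.
\begin{enumerate}[(a)]
\item Proposition~\ref{prop:subkernel_modelspace}(ii) gives $g=\ol\beta\alpha$ with $\alpha$ inner.
\item Let $d=\gcd(\beta,\alpha)$ and write $\beta=d\theta$, $\alpha=d\alpha_1$. Then $g=\ol\theta\alpha_1$ with $\gcd(\theta,\alpha_1)=1$.
\item Applying step (a) of the converse direction to this reduced factorisation gives $\kertg\subset K_\theta$.
\item Minimality of $K_\beta$ now gives $K_\beta\subset K_\theta$, so $\beta$ divides $\theta$. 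Since also $\theta$ divides $\beta$, $d$ is constant, and (ii) holds with $\theta=\beta$ up to a unimodular constant.
\end{enumerate}

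The main obstacle is the existence of the gcd in step (b) and the cancellation used in step (c) of the converse. Both are standard for inner functions: gcds exist by taking the common Blaschke zeros with minimum multiplicity and the infimum of the singular measures. I expect to cite this as known rather than prove it, since the paper itself uses $\gcd(\theta,\alpha)$ for inner functions. I also need that a model space determines its inner function up to a unimodular constant, and that $K_\theta\subset K_\beta$ holds if and only if $\theta$ divides $\beta$, which is standard.
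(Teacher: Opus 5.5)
Your proof is correct and essentially matches the paper's. For (ii)$\Rightarrow$(i) you use Proposition~\ref{prop:subkernel_modelspace}(ii) and cancel the coprime factor in $\theta\alpha'=\alpha\beta$; for (i)$\Rightarrow$(ii) you remove $\gcd(\beta,\alpha)$ and invoke minimality, which is the direct form of the paper's contradiction argument with $K_{\theta\ol\delta}\subsetneq K_\theta$. As a minor remark, your step (b) already yields (ii) from the mere existence of some model space containing $\kertg$ (this is the paper's next proposition), so your steps (c)--(d) only serve to identify the minimal space with $K_\theta$.
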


\begin{proof}
If (ii) holds then $\kertg \subset K_\theta$ and, if $\kertg \subset K_\beta$ for any other inner function $\beta$, then, by Proposition \ref{prop:subkernel_modelspace}(ii)
\begin{equation}
g = \ol\theta \alpha = \ol\beta \gamma
\end{equation}
for some inner function $\gamma$, so that $\theta \gamma = \alpha \beta$.
Since $\gcd (\theta, \alpha) = 1$, it follows that $\theta \preceq\beta$, i.e. $\theta\ol\beta\in\Hinf$, and $K_\theta \subset K_\beta$.

Conversely, if (i) holds, let $K_\theta$ be the minimal model space containing $\ker T_g$. Then, for any inner function $\beta$,
\begin{equation}
\label{eq:proof:minimal_model_space}
\kertg \subset K_\beta \implies K_\theta \subset K_\beta\ .
\end{equation}
Since $g= \ol\theta \alpha$ for some inner function $\alpha$, if $\gcd (\theta, \alpha) = \delta \notin\CC$, then $\kertg = \ker T_{(\ol\theta \delta) (\alpha \ol\delta)}$ and $\kertg \subset K_{\theta \ol\delta} \subsetneq K_\theta$, contradicting \eqref{eq:proof:minimal_model_space}, so we must have $\gcd (\theta, \alpha) = 1$.
\end{proof}

\begin{prop}
If $\zero \neq \kertg \subset K_\beta$ for some inner function $\beta$, then there exists a minimal model space $K_\theta$ containing \kertg.
If $g=\ol\beta \gamma$ with $\gamma$ inner, then $K_\theta = K_{\beta \ol\delta}$ where $\delta = \gcd (\beta, \gamma)$.
\end{prop}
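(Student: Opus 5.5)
The plan is to reduce the statement to Proposition \ref{prop:minimal_model_space_condition}. There are two obstacles: $g$ need not be unimodular, and $g$ need not already be of the form $\ol\beta\gamma$ with coprime factors.

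First, by Proposition \ref{prop:subkernel_modelspace}(ii), since $\zero\neq\kertg\subset K_\beta$, we have $\kertg=\ker T_{\ol\beta\gamma}$ for some inner $\gamma$. So we may replace $g$ by the unimodular symbol $\ol\beta\gamma$ without changing the kernel. The notion of a minimal model space containing \kertg\ depends only on the kernel, so it suffices to treat $g=\ol\beta\gamma$. This also covers the second sentence of the statement.

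Next, set $\delta=\gcd(\beta,\gamma)$ and write $\beta=\delta\theta$ and $\gamma=\delta\alpha$ with $\theta,\alpha$ inner. Then $g=\ol\beta\gamma=\ol\theta\ol\delta\delta\alpha=\ol\theta\alpha$, since $|\delta|=1$ on \TT.

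The key step is to check that $\gcd(\theta,\alpha)=1$. If some non-constant inner $\eta$ divided both $\theta$ and $\alpha$, then $\delta\eta$ would divide both $\beta$ and $\gamma$. This contradicts the maximality of $\delta$ as a greatest common divisor. The existence of the gcd of two inner functions is standard, via the greatest common inner divisor from the factorisation into Blaschke and singular parts, so this is routine.

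Finally, $|g|=1$ and $g=\ol\theta\alpha$ with $\gcd(\theta,\alpha)=1$, so condition (ii) of Proposition \ref{prop:minimal_model_space_condition} holds. That proposition then gives that $K_\theta=K_{\beta\ol\delta}$ is the minimal model space containing \kertg.

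The main point needing care is the first reduction. Minimality is a property of the kernel alone, and Proposition \ref{prop:minimal_model_space_condition} is stated for unimodular symbols, so replacing $g$ by $\ol\beta\gamma$ is legitimate. Note that the $\gamma$ in the statement is exactly the one produced by Proposition \ref{prop:subkernel_modelspace}(ii) when $g$ is unimodular.

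One might worry that $\theta$ could be constant. In that case $K_\theta=\zero$, which contradicts $\kertg\neq\zero$, so this case does not arise.
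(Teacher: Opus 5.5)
Your proposal is correct and follows the paper's argument: factor out $\delta=\gcd(\beta,\gamma)$ to write $g=(\ol\beta\delta)(\gamma\ol\delta)$ with coprime inner factors, then apply Proposition \ref{prop:minimal_model_space_condition}. Your explicit reduction to the unimodular symbol $\ol\beta\gamma$ via Proposition \ref{prop:subkernel_modelspace}(ii) spells out a step that the paper's one-line proof leaves implicit, since the paper writes $g=\ol\beta\gamma$ directly even when $g$ is not unimodular.
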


\begin{proof}
If $\kertg \subset K_\beta$ then $g=\ol\beta \gamma = (\ol\beta \delta) (\gamma \ol\delta)$ where $\gcd (\beta \ol\delta, \gamma \ol\delta) = 1$.
\end{proof}

\begin{cor}
Let $g\in\RR$, with $n_\TT$ zeroes on \TT\ (counting multiplicities) and let $n>n_\TT$, where $n$ is defined as in \eqref{eq:n_def}.
Let moreover $B_1$ and $N$ be defined as in \eqref{eq:B1_def} and \eqref{eq:N_def}, respectively.
Then,
\begin{enumerate}[(i)]
\item if $N\geq 0$, $K_{B_1}$ is the minimal model space containing \kertg;

\item if $N<0$, $K_{B_1 z^{|N|}}$ is the minimal model space containing \kertg.
\end{enumerate}
\end{cor}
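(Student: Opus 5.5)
From the proof of Proposition~\ref{prop:rational_symbol_contained_model_space}, since $n>n_\TT$, we have
\begin{equation*}
\kertg = \ker T_{\ol{B_1}B_2 z^N}.
\end{equation*}
The plan is to apply Proposition~\ref{prop:minimal_model_space_condition} to the unimodular symbol $\ol{B_1}B_2z^N$. The kernel is nontrivial, since it equals $\frac{Q_+}{P_+}K_{z^{n-n_\TT}}\neq\zero$ because $n>n_\TT$. It therefore suffices to write this symbol as $\ol\theta\alpha$ with $\theta,\alpha$ inner and $\gcd(\theta,\alpha)=1$, where $\theta=B_1$ when $N\ge 0$ and $\theta=B_1z^{|N|}$ when $N<0$.

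\textbf{Case $N\ge 0$.} Take $\theta=B_1$ and $\alpha=B_2z^N$. For $N<0$, take $\theta=B_1 z^{|N|}$ and $\alpha=B_2$. In both cases $\theta$ and $\alpha$ are finite Blaschke products, so $\gcd(\theta,\alpha)=1$ reduces to showing they have no common zeros in $\DD$.

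\textbf{Zeros of $B_1$ and $B_2$.} By construction, $B_1$ vanishes exactly at the points $\lambda=1/\ol\mu$, where $\mu$ runs over the zeros of $P_+$ in $\DD_-$, with multiplicity. Likewise, $B_2$ vanishes at the points $1/\ol\mu$ for the zeros $\mu$ of $Q_+$. Since $P$ and $Q$ are coprime, $P_+$ and $Q_+$ have no common zeros, so $B_1$ and $B_2$ have no common zeros.

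\textbf{Zeros at the origin.} None of the zeros $1/\ol\mu$ is $0$, because every $\mu$ is finite. Hence neither $B_1$ nor $B_2$ vanishes at $0$, and the factor $z^{|N|}$ (or $z^N$) shares no zero with the other side. This gives $\gcd(\theta,\alpha)=1$ in both cases.

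The conclusion then follows from the final statement of Proposition~\ref{prop:minimal_model_space_condition}. The main point needing care is the zero bookkeeping: checking that the factorisation in \eqref{eq:B1_def} really places the zeros of $B_1$ at the reflections of the zeros of $P_+$, and that no zero lies at $0$. This is immediate from the displayed identity $z-\frac{1}{\ol\lambda}=-\frac{1}{\ol\lambda}\,\ol{B_\lambda}\,\frac{z-\lambda}{z}\,z$, where $\lambda\in\DD$ is the reflection of the zero $1/\ol\lambda$.
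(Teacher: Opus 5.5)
Your proposal is correct and follows essentially the same route as the paper. Both arguments rewrite $\kertg=\ker T_{\ol{B_1}B_2z^N}$, note that $B_1$ and $B_2$ have no common zeros (by coprimality of $P$ and $Q$) and that neither vanishes at $0$, conclude that the relevant pair of finite Blaschke products is coprime, and finish with Proposition~\ref{prop:minimal_model_space_condition}. Your explicit checks of the kernel's nontriviality and of the zero bookkeeping spell out what the paper's short proof leaves implicit.
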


\begin{proof}
By construction, $B_1$ and $B_2$ do not have common zeroes and neither $B_1$ or $B_2$ vanish at $0$, so, if $N\geq 0$, we have
\begin{equation}
\kertg = \ker T_{\ol{B_1} ( B_2 z^N )} \text{ with } \gcd (B_1, B_2 z^N ) = 1
\end{equation}
and, if $N<0$,
\begin{equation}
\kertg = \ker T_{ \ol{ B_1 z^{|N|} } B_2 } \text{ with } \gcd (B_1 z^{|N|} , B_2 ) = 1.
\end{equation}
\end{proof}

We illustrate this result in the following example.

\begin{ex}
\label{ex:rational_symbol}
We have $K = \ker T_\frac{z-2}{z^2 (z-3) (z-4)} \subset K_{z^3 B_{\frac{1}{2}}}$.
To verify this inclusion, let $\phi_+ \in K$, i.e., $\phi_+\in\Hp$ and
\begin{equation}
\frac{z-2}{z^2 (z-3) (z-4)} \phi_+ = \phi_- \in \Hm \quad (\Hm := L^2 \ominus \Hp)\ .
\end{equation}
Then
\begin{equation}
\frac{1 - \frac{1}{2} z}{z- \frac{1}{2}} \ \frac{1}{z^3} \ \phi_+ =
\annot{- \frac{1}{2} \frac{(z-3) (z-4)}{z (z-\frac{1}{2})}}{\in\ol\Hinf}
\annot{\left( \frac{z-2}{z^2 (z-3) (z-4)} \phi_+ \right)}{\phi_- \in \Hm}
\in\Hm\ .
\end{equation}
So indeed $\phi_+ \in \ker T_{\ol z^3 \ol{B_{\frac{1}{2}}}} = K_{z^3 B_{\frac{1}{2}}}$.
We have that $K_{z^3 B_{\frac{1}{2}}}$ is the minimal model space containing $K$.
\end{ex}


\section{Maximal functions in subkernels}
\label{sec:maximal_functions_subkernels}

We now consider the relations between maximal functions in a Toeplitz kernel and in a subkernel of the same Toeplitz operator.

In every nontrivial Toeplitz kernel $\kertg$ one can find a maximal function $f^M$ (which is not unique) such that $\kertg = \KK_{min}(f^M)$.
If $f^M = IO$ is an inner-outer factorisation of that maximal function, with $I$ inner and $O\in\Hp$ outer (a notation that we will keep throughout the paper), then we can associate a unimodular symbol to \kertg\ (\cite{CP14}):
\begin{equation}
\label{eq:unimod_symbol_from_max_func}
\kertg = \ker T_{\ol{zI}\frac{\ol O}{O}}\ .
\end{equation}
Thus every maximal function in a Toeplitz kernel uniquely determines that kernel.
Maximal functions are characterised by the following necessary and sufficient condition.

\begin{prop}[{\cite[Theorem 2.2]{CP18_multipliers}}]
\label{prop:max_func_characterization}
$f^M \in \Hp$ is a maximal  function in $\ker T_g$ if and only if, for some $\OO\in\Hp$ outer, we have
\begin{equation}
\label{eq:max_func_characterization}
g f^M = \ol z \ol \OO\ .
\end{equation}
\end{prop}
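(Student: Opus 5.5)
We prove the two implications separately, using the definition of maximal function together with the unimodular-symbol representation \eqref{eq:unimod_symbol_from_max_func} and Corollary \ref{cor:kernel.subset.of.another.kernel}.

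\emph{Sufficiency.} Suppose $f^M\in\Hp$ and $g f^M = \ol z\,\ol\OO$ with $\OO\in\Hp$ outer.
\begin{enumerate}[(a)]
\item First, $f^M\in\kertg$, since $g f^M = \ol z\,\ol\OO\in\Hm$.
\item Let $h\in\Linf$ with $f^M\in\ker T_h$, so $h f^M = \ol z\,\ol{\phi}$ for some $\phi\in\Hp$.
\item Note that $g\neq 0$ a.e., because $g\neq 0$ and, by the identity, $g=0$ would force $\OO=0$. Also $f^M\neq 0$ a.e., since it is a nonzero $\Hp$ function. Hence we may divide:
\begin{equation*}
\frac{h}{g} = \frac{h f^M}{g f^M} = \frac{\ol\phi}{\ol\OO}.
\end{equation*}
\item Since $\OO$ is outer and $\phi\in\Hp$, the quotient $\phi/\OO$ lies in \NN. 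So $h/g\in\ol\NN$.
\item Corollary \ref{cor:kernel.subset.of.another.kernel} then gives $\kertg\subset\ker T_h$.
\end{enumerate}
Therefore $f^M$ is maximal in \kertg.

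\emph{Necessity.} Suppose $f^M$ is maximal in \kertg, and write $f^M=IO$ as an inner–outer factorisation.
\begin{enumerate}[(a)]
\item Set $\tilde g=\ol{zI}\,\ol O/O$. Then $\tilde g f^M=\ol z\,\ol O\in\Hm$, so $f^M\in\ker T_{\tilde g}$.
\item By the sufficiency direction just proved, with $\OO=O$, $f^M$ is maximal in $\ker T_{\tilde g}$.
\item Both \kertg\ and $\ker T_{\tilde g}$ contain $f^M$ and each is the minimal kernel for it. Minimality applied in both directions gives $\kertg=\ker T_{\tilde g}$; this is \eqref{eq:unimod_symbol_from_max_func}.
\item Since $f^M\in\kertg$, write $g f^M=\ol z\,\ol\psi$ with $\psi\in\Hp$.
\item Because $\kertg=\ker T_{\tilde g}$, Corollary \ref{cor:kernel.subset.of.another.kernel} applied in both directions gives $g/\tilde g\in\ol\NN$ and $\tilde g/g\in\ol\NN$.
\item Compute the quotient:
\begin{equation*}
\frac{g}{\tilde g}=\frac{g f^M}{\tilde g f^M}=\frac{\ol\psi}{\ol O}.
\end{equation*}
Hence $\psi/O$ and $O/\psi$ both lie in \NN.
\end{enumerate}

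The main obstacle is to conclude from this that $\psi$ is outer. Write $\psi=J\psi_o$ with $J$ inner and $\psi_o$ outer. Then
\begin{equation*}
\frac{O}{\psi}=\ol J\,\frac{O}{\psi_o}\in\NN,
\end{equation*}
and $O/\psi_o$ is outer. So $\ol J$ multiplied by an outer function lies in \NN. Comparing inner factors in the Smirnov class, this forces $J$ to be constant. Thus $\psi$ is outer, and $\OO=\psi$ satisfies \eqref{eq:max_func_characterization}.
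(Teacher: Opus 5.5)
The paper gives no proof of this proposition; it is quoted from \cite[Theorem 2.2]{CP18_multipliers}, so there is no in-paper argument to compare with. Judged against the results the paper states before it, your proof is correct.

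The sufficiency half only uses the easy implication $G/g\in\ol\NN\Rightarrow\kertg\subset\ker T_G$ of Corollary \ref{cor:kernel.subset.of.another.kernel}, which is just Smirnov's theorem $\NN\cap L^2=\Hp$. One small correction there: $g\neq 0$ a.e.\ follows from $|g f^M|=|\OO|>0$ a.e., because $\OO$ is outer, and not from $g\neq 0$.

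In the necessity half, steps (a)--(c) correctly reprove \eqref{eq:unimod_symbol_from_max_func}, and your final step ($\ol J$ times an outer function lies in $\NN$ only if $J$ is constant) is right. The caveat is step (e). It uses the \emph{forward} implication $\kertg\subset\ker T_{\tilde g}\Rightarrow\tilde g/g\in\ol\NN$, and that is the substantive half of the corollary. Getting $\ol\NN$ rather than just the Nevanlinna class amounts to producing some $f\in\kertg$ with $gf=\ol z\,\ol\OO$, $\OO$ outer, which is essentially what you are proving. In the paper this corollary is presented as the case $w=1$ of Theorem \ref{thm:multiplicadores_into_kernels}, whose condition (iii) is phrased through maximal functions. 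In \cite{CP18_multipliers} it also comes after the present characterisation. So as a self-contained proof, your necessity direction risks being circular unless that implication is proved independently.

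It can be proved independently, and the same idea gives a much shorter direct proof of necessity that bypasses $\tilde g$, the corollary and the Smirnov class.
Write $gf^M=\ol z\,\ol\psi$ with $\psi=J\psi_o$.
Then $gJf^M=\ol z\,\ol{\psi_o}\in\Hm$, so $f^M\in\ker T_{gJ}$, and maximality gives $\kertg\subset\ker T_{gJ}$.
The same identity says $Jf^M\in\kertg$, hence $Jf^M\in\ker T_{gJ}$.
That is, $gJ\,(Jf^M)=J\,\ol z\,\ol{\psi_o}=\ol z\,\ol{(\ol J\psi_o)}\in\Hm$, which means $\ol J\psi_o\in\Hp$.
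Since $\psi_o$ is outer, $J$ is constant, so $\OO=\psi$ is outer.

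This uses only the definition of a maximal function. Applying the same trick to an arbitrary nonzero $f\in\kertg$, with $Jf$ in place of $Jf^M$, also proves the forward implication of Corollary \ref{cor:kernel.subset.of.another.kernel} directly, which closes the loop in your version.
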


We can express this result in terms of a conjugation (\cite{GarMashRoss16}), if $|g|=1$.
Then one can define a natural conjugation on \kertg\ (\cite{DyPlaPtak22})
\begin{equation}
\label{eq:def_conj_Cg}
\Cg f = \ol g \ol z \ol f \quad , \quad \text{for } f \in \kertg\ ,
\end{equation}
and the necessary and sufficient condition \eqref{eq:max_func_characterization} can be reformulated as follows.

\begin{prop}[{\cite[Theorem 4.1]{CCD25}}]
\label{prop:max_func_charact_with_conj}
If $|g|=1$, then $f^M\in\Hp$ is a maximal function in $\kertg$ if and only if $\Cg f^M$ is outer in \Hp.
\end{prop}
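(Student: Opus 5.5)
The statement to prove is Proposition \ref{prop:max_func_charact_with_conj}: for $|g|=1$, a function $f^M\in\Hp$ is maximal in \kertg\ if and only if $\Cg f^M$ is outer in \Hp. The plan is to derive it directly from Proposition \ref{prop:max_func_characterization}, translating the identity $g f^M = \ol z \ol\OO$ into a statement about $\Cg f^M = \ol g \ol z \ol{f^M}$. The key point is that $|g|=1$ gives $\ol g = 1/g$, so conjugating and multiplying through by $\ol g$ is a reversible operation.

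\emph{Forward direction.} Suppose $f^M$ is maximal in \kertg. By Proposition \ref{prop:max_func_characterization} there is an outer $\OO\in\Hp$ with $g f^M = \ol z\ol\OO$ a.e. on \TT. Taking complex conjugates gives $\ol g\,\ol{f^M} = z\OO$, and hence
\[
\Cg f^M = \ol z\,\ol g\,\ol{f^M} = \ol z\, z\, \OO = \OO .
\]
So $\Cg f^M$ is outer in \Hp. Maximal functions lie in \kertg\ by definition, so $\Cg f^M$ is well defined.

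\emph{Converse.} Suppose $f^M\in\Hp$ and $\Cg f^M = \ol g\ol z\ol{f^M} =: \OO$ is outer in \Hp. Conjugating gives $g z f^M = \ol\OO$, that is, $g f^M = \ol z\ol\OO$ with $\OO$ outer in \Hp. Proposition \ref{prop:max_func_characterization} then says $f^M$ is a maximal function in \kertg. Membership in the kernel is automatic here: $g f^M = \ol z\ol\OO\in\Hm$, so $P^+(g f^M)=0$.

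The only real subtlety is whether $\Cg$ is defined on all of \Hp\ or only on \kertg. I would read the condition ``$\Cg f^M$ is outer'' as shorthand for the formula $\ol g\ol z\ol{f^M}$ being an outer function in \Hp. Under that reading, the converse computation itself establishes $f^M\in\kertg$, so there is no circularity. Apart from this, the argument is just the pointwise identity $|g|^2=1$ on \TT.
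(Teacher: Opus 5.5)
Your proof is correct and follows the paper's route. The paper introduces this result (cited from CCD25) as a direct reformulation of the characterisation $g f^M = \ol z\,\ol\OO$ in Proposition \ref{prop:max_func_characterization}, and your argument is exactly that reformulation: conjugate the identity and use $|g|=1$ to obtain $\Cg f^M = \OO$, with every step reversible.
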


Moreover, we have the following.

\begin{prop}[{\cite[Proposition 4.3]{CCD25}}]
\label{prop:conj_max_func_is_Ofactor}
Let $|g|=1$.
If $f^M$ is a maximal function in $\ker T_g$, then $\Cg f^M$ is the outer factor in an inner-outer factorisation of $f^M$.
\end{prop}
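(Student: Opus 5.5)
The plan is to use the characterisation of maximal functions in Proposition \ref{prop:max_func_characterization} to compute $\Cg f^M$ explicitly, and then to compare moduli on \TT\ with the outer factor of $f^M$.

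First, since $f^M$ is a maximal function in \kertg, Proposition \ref{prop:max_func_characterization} gives an outer function $\OO\in\Hp$ with $g f^M = \ol z \ol\OO$. Note that $f^M\neq 0$, since a maximal function lies in a non-trivial kernel which it determines; hence $\OO\neq 0$. Taking complex conjugates gives $\ol g\, \ol{f^M} = z\OO$. Substituting this into the definition \eqref{eq:def_conj_Cg} gives
\begin{equation*}
\Cg f^M = \ol z\, \ol g\, \ol{f^M} = \ol z\, z\, \OO = \OO .
\end{equation*}
So $\Cg f^M$ is outer in \Hp. This recovers Proposition \ref{prop:max_func_charact_with_conj} in one direction and identifies $\Cg f^M$ with $\OO$.

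Next, write an inner-outer factorisation $f^M = IO$, with $I$ inner and $O\in\Hp$ outer. Since $|g|=1$ a.e.\ on \TT, the identity $g f^M = \ol z\ol\OO$ gives $|O| = |f^M| = |g f^M| = |\OO|$ a.e.\ on \TT. An outer function is determined by its boundary modulus up to a unimodular constant, because it is the exponential of the Herglotz integral of $\log|\cdot|$. Hence $O = c\,\OO$ for some $c\in\CC$ with $|c|=1$.

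Therefore $f^M = (cI)\,\OO = (cI)\,\Cg f^M$, where $cI$ is inner. This is an inner-outer factorisation of $f^M$ whose outer factor is exactly $\Cg f^M$, as claimed. There is no real obstacle here. The only point needing care is the uniqueness of outer functions with a given modulus, and keeping track of the unimodular constant, which is absorbed into the inner factor. This is also why the statement speaks of ``an'' inner-outer factorisation.
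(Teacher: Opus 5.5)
Your proof is correct: substituting $g f^M = \ol z\,\ol\OO$ from Proposition~\ref{prop:max_func_characterization} into the definition gives $\Cg f^M = \OO$. Since $|g|=1$, the outer factor of $f^M$ has the same modulus on $\TT$ as $\OO$, so it equals $c\OO$ with $|c|=1$, and the constant is absorbed into the inner factor. The paper does not prove this statement itself but quotes it from \cite{CCD25}; your argument is the natural one and gives the form the paper later uses, namely $C_{\ol G}(IO)=cO$ in the proof of Proposition~\ref{prop:max_func_vanishing}.
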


In particular, if $g=\ol\theta$, where $\theta$ is an inner function, and for any $\lambda\in\DD$, denoting 
\begin{equation}
\kktil = \frac{\theta - \theta(\lambda)}{z - \lambda}
\quad \text{and} \quad
\kk = \frac{1 - \ol{\theta(\lambda)}\theta}{1 - \ol\lambda z} = C_\theta \kktil \ ,
\end{equation}
we have that \kktil\ is a maximal function in $K_\theta$ and
\begin{equation}
\label{eq:ktil_IO_fact}
\kktil = \frac{\kktil}{\kk} \ \kk\
\end{equation}
is an inner-outer factorisation of \kktil, with $\kktil / \kk$ inner and $\kk\in\GG\Hinf$ outer.

We now study the question of how to obtain maximal functions in a subkernel of $T_g$, given a maximal function in \kertg.

The following is a generalisation of Theorem 6.9 in \cite{CMP16}.

\begin{prop}
\label{prop:max_func_subkernel_times_u+_gives_max_func_big_kernel}
Let $u\in\Hinf\bs\zero$ and let $f^M$ be a maximal function in $\ker T_{G u}$.
Then $u f^M$ is a maximal function in $\ker T_G$.
\end{prop}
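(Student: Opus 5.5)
Since $f^M$ is a maximal function in $\ker T_{Gu}$, Proposition \ref{prop:max_func_characterization} gives an outer function $\OO\in\Hp$ with
\begin{equation*}
G u f^M = \ol z\, \ol\OO .
\end{equation*}
We want to conclude, again by Proposition \ref{prop:max_func_characterization}, that $u f^M$ is a maximal function in $\ker T_G$. So it suffices to check that $uf^M\in\Hp$ and that $G(uf^M)=\ol z\,\ol{\OO'}$ for some outer $\OO'\in\Hp$.

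Both conditions are essentially immediate. Since $u\in\Hinf$ and $f^M\in\Hp$, we have $uf^M\in\Hp$. Moreover $u\neq 0$ and $f^M\neq 0$, because a maximal function lies in a nontrivial kernel. The identity $G(uf^M) = Guf^M = \ol z\,\ol\OO$ holds with the same outer function $\OO$. The converse direction of Proposition \ref{prop:max_func_characterization} then shows that $uf^M$ is a maximal function in $\ker T_G$.

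The only point to watch is the exact form of Proposition \ref{prop:max_func_characterization}: it must be read as ``$f\in\Hp$ is maximal in $\ker T_g$ iff $gf=\ol z\,\ol\OO$ for some outer $\OO\in\Hp$'', with no additional hypothesis on $g$. In particular, $G$ is only assumed to be in $\Linf$ and need not be unimodular. If one prefers a check independent of that proposition, one can argue directly from the definition.

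First, $uf^M\in\ker T_G$, since $Guf^M=\ol z\,\ol\OO\in\Hm$.

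Next, suppose $uf^M\in\ker T_h$, so that $huf^M=\ol z\,\ol\phi$ with $\phi\in\Hp$. Dividing this by $Guf^M=\ol z\,\ol\OO$ gives $h/G=\ol{\phi/\OO}$. Since $\OO$ is outer, $\phi/\OO\in\NN$, so $h/G\in\ol\NN$. Corollary \ref{cor:kernel.subset.of.another.kernel} then yields $\ker T_G\subset\ker T_h$, which is exactly the maximality of $uf^M$ in $\ker T_G$.

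Either route is short, and I expect no real obstacle beyond verifying that the division step is legitimate. That step is fine: $uf^M$ is nonzero a.e. on $\TT$, and the identity $Guf^M=\ol z\,\ol\OO$ then forces $G\neq0$ a.e.
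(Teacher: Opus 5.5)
Your first argument is exactly the paper's proof. You rewrite $G u f^M = \ol z\,\ol\OO$ as $G(uf^M)=\ol z\,\ol\OO$, note that $uf^M\in\Hp$, and apply Proposition \ref{prop:max_func_characterization} in the converse direction; your alternative check via $h/G=\ol{\phi/\OO}\in\ol\NN$ and Corollary \ref{cor:kernel.subset.of.another.kernel} is also correct, though not needed.
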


\begin{proof}
By Proposition \ref{prop:max_func_characterization}, $G u f^M = \ol z \ol\OO$ with $\OO\in\Hp$ outer, so $G (u f^M) = \ol z \ol\OO$ and $u f^M$ is a maximal function in $\ker T_G$.
\end{proof}

\begin{cor}[{\cite{CMP16}}]
\label{cor:max_func_subkernel}
Let $\zero \neq \kertg \subset \ker T_G$, with $\kertg = \ker T_{G \alpha}$ where $\alpha$ is inner.
If $f^M$ is a maximal function in \kertg\ then $F^M = \alpha f^M$ is a maximal function in $\ker T_G$.
Conversely, if $F^M \in \alpha \Hp$ is a maximal function in $\ker T_G$, then $f^M = \ol\alpha F^M$ is a maximal function in $\kertg = \ker T_{G \alpha}$.
\end{cor}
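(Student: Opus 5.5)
The statement has two directions, and I will derive both from the characterisation of maximal functions in Proposition \ref{prop:max_func_characterization}: $f^M\in\Hp$ is maximal in $\ker T_g$ if and only if $g f^M = \ol z\,\ol\OO$ for some outer $\OO\in\Hp$. Throughout I use that $\kertg = \ker T_{G\alpha}$. Corollary \ref{cor:kernel.subset.of.another.kernel} shows that the kernel determines $g$ only up to a factor in $\ol\NN$, so I need to make sure the relevant symbol is really $G\alpha$. I will do this by noting that maximality of $f^M$ in $\kertg$ is a property of the kernel itself, namely $\KK_{min}(f^M)=\kertg$. Hence $f^M$ is maximal in $\ker T_{G\alpha}$, and I may use the symbol $G\alpha$ in the characterisation.

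\emph{First direction.} Let $f^M$ be maximal in $\kertg=\ker T_{G\alpha}$. Apply Proposition \ref{prop:max_func_subkernel_times_u+_gives_max_func_big_kernel} with $u=\alpha\in\Hinf\bs\zero$. It gives directly that $\alpha f^M$ is maximal in $\ker T_G$. Written out, this is just $G(\alpha f^M) = (G\alpha)f^M = \ol z\,\ol\OO$ with $\OO$ outer.

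\emph{Converse.} Let $F^M=\alpha h$ with $h\in\Hp$ be maximal in $\ker T_G$, so $f^M:=\ol\alpha F^M = h\in\Hp$. By Proposition \ref{prop:max_func_characterization}, $G F^M = \ol z\,\ol\OO$ with $\OO$ outer. Then
\begin{equation*}
(G\alpha) f^M = G\alpha\ol\alpha F^M = G F^M = \ol z\,\ol\OO ,
\end{equation*}
using $|\alpha|=1$ a.e.\ on \TT. Applying Proposition \ref{prop:max_func_characterization} again, $f^M$ is a maximal function in $\ker T_{G\alpha}=\kertg$.

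The only delicate point is the symbol-dependence issue noted at the start. It is resolved by reading maximality as a property of the kernel, via the definition $\KK_{min}(f^M)$, rather than of a particular symbol. With that settled, both directions reduce to the algebraic identity $G\alpha\ol\alpha=G$.
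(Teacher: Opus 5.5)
Your proof is correct and follows the paper's intended route. The first direction is exactly Proposition \ref{prop:max_func_subkernel_times_u+_gives_max_func_big_kernel} with $u=\alpha$, and the converse is the same one-line use of Proposition \ref{prop:max_func_characterization} via $G\alpha\ol\alpha F^M = GF^M = \ol z\,\ol\OO$. One minor point: the symbol ambiguity you flag is governed by Corollary \ref{cor:kernels_iguais} rather than Corollary \ref{cor:kernel.subset.of.another.kernel}, but since maximality is defined through the kernel, your resolution is sound.
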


It follows from Corollary \ref{cor:max_func_subkernel} that the question of finding  maximal functions in a subkernel of $T_G$, $\ker T_{G \alpha}$, is equivalent to finding maximal functions in $\ker T_G$ whose inner factor can be divided by $\alpha$.

Note that, if $\ker T_{G\alpha} \neq \zero$, then there exists a maximal function $F^M$ in $\ker T_G$, with inner-outer factorisation $F^M = IO$, where the inner factor $I$ is such that $\ker T_{\ol z \alpha \ol I} \neq\zero$ (cf. Corollary \ref{cor:max_func_subkernel}).
We will focus here on the case where $\alpha = B$ is a finite Blaschke product, with $\dim K_B =N \geq 1$.
Then
\begin{equation}
\ker T_{GB} \neq\zero \iff \dim \ker T_G >N
\end{equation}
(\cite{CMP16}) and there exists a maximal function in $\ker T_G$, as above, with inner-outer factorisation
\begin{equation}
F^M = IO \quad , \quad \text{where } \dim K_I \geq N.
\end{equation}

This is an important case which is related, for instance, with the interpolation problem of describing the subspace of all functions in $\ker T_G$ that have zeroes of a prescribed order at certain points (which is of the form $B \ker T_{GB}$ where $B$ is a finite Blaschke product),
with invariance properties regarding multiplication operators and orthogonal decompositions of Toeplitz kernels (\cite{CaKlisPtak24_XYinv}),
and with kernels of asymmetric truncated Toeplitz operators (\cite{CP24}).

We start by considering the case where $N=1$.

\begin{prop}
\label{prop:max_func_vanishing}
Let $\dim \ker T_G >1$.
If $F^M$ is a maximal function in $\ker T_G$, with inner-outer factorisation $F^M = IO$, where $I$ is a non-constant inner function and $O$ is outer, then
\begin{equation}
\label{eq:max_func_with_zero}
F^M_\lambda = (z-\lambda) \tilde k^I_\lambda O = B_\lambda \tilde k^I_\lambda (1-\ol\lambda z) O
\end{equation}
is a maximal function in $\ker T_G$, for any $\lambda\in\DD$ .
\end{prop}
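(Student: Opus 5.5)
The plan is to use the characterisation of Proposition \ref{prop:max_func_characterization}. Since $F^M = IO$ is maximal in $\ker T_G$, we have $G I O = \ol z \ol\OO$ with $\OO\in\Hp$ outer. We must show that $G F^M_\lambda = \ol z\, \ol{\OO_\lambda}$ for some outer $\OO_\lambda\in\Hp$. First we check that $F^M_\lambda\in\Hp$. The factor $\tilde k^I_\lambda = (I - I(\lambda))/(z-\lambda)$ is in $\Hinf$, so $F^M_\lambda = (I-I(\lambda))\,O \in \Hp$. The second expression in \eqref{eq:max_func_with_zero} is immediate from $z-\lambda = B_\lambda(1-\ol\lambda z)$.

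Next we compute, on $\TT$,
\begin{equation*}
G F^M_\lambda = G\,(I - I(\lambda))\,O = G I O\,(1 - I(\lambda)\ol I) = \ol z\,\ol\OO\,\big(1-I(\lambda)\ol I\big) = \ol z\,\ol{\OO\,(1-\ol{I(\lambda)}\, I)} .
\end{equation*}
So we set $\OO_\lambda := \OO\,(1-\ol{I(\lambda)}\,I)$. This lies in $\Hp$, since $\OO\in\Hp$ and $1-\ol{I(\lambda)}I\in\Hinf$. It remains to show that $\OO_\lambda$ is outer. Because $I$ is non-constant inner, $|I(\lambda)|<1$ for $\lambda\in\DD$. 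Hence $1-\ol{I(\lambda)}I$ has real part bounded below by $1-|I(\lambda)|>0$ in $\DD$, so it belongs to $\GG\Hinf$ and is in particular outer. (Equivalently, it is $(1-\ol\lambda z)k^I_\lambda$, a product of invertible elements of $\Hinf$.) A product of outer functions is outer, so $\OO_\lambda$ is outer. Proposition \ref{prop:max_func_characterization} then gives that $F^M_\lambda$ is a maximal function in $\ker T_G$.

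The only delicate point is the use of non-constancy of $I$, which guarantees $|I(\lambda)|<1$ and therefore invertibility of $1-\ol{I(\lambda)}I$. We should also make sure $F^M_\lambda\neq 0$. This holds because $I-I(\lambda)\not\equiv 0$ when $I$ is non-constant, and $O\neq 0$. The hypothesis $\dim\ker T_G>1$ is consistent with this: a maximal function with non-trivial inner factor exists only in kernels of dimension greater than one. It is not otherwise needed in the computation. Finally, the factorisation $F^M_\lambda = B_\lambda\,\tilde k^I_\lambda\,(1-\ol\lambda z)O$ exhibits $F^M_\lambda\in B_\lambda\Hp$. This is the form needed afterwards to apply Corollary \ref{cor:max_func_subkernel} with $\alpha=B_\lambda$.
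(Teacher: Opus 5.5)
Your proof is correct and is essentially the paper's argument: both rewrite $F^M_\lambda=(I-I(\lambda))O$ and observe that the outer function attached to $F^M$ simply gets multiplied by $1-\ol{I(\lambda)}I\in\GG\Hinf$. The only difference is in presentation: you apply the characterisation $GF^M=\ol z\,\ol\OO$ of Proposition~\ref{prop:max_func_characterization} directly for general $G$, whereas the paper first normalises $|G|=1$ and runs the same computation through the conjugation $C_{\ol G}$ (Propositions~\ref{prop:max_func_charact_with_conj} and~\ref{prop:conj_max_func_is_Ofactor}), so your version avoids that normalisation and does not need the identification $C_{\ol G}F^M=cO$.
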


\begin{proof}
Assume that $|G|=1$.
Since $F^M = IO$ is a maximal function in $\ker T_G$, we have that $C_{\ol G} (IO) = cO$ for some constant $c$ with $|c|=1$, so
\begin{equation}
C_{\ol G} [ (z-\lambda) \tilde k^I_\lambda O ] = C_{\ol G} [ (I - I(\lambda)) O ] = cO - \ol{I(\lambda)} c I O = (1 - \ol{I(\lambda)} I) cO
\end{equation}
where the right-hand side represents an outer function.
Thus $F^M_\lambda$ is maximal in $\ker T_G$ by Proposition \ref{prop:max_func_charact_with_conj}.
\end{proof}

\begin{cor}
With the same assumptions as in Proposition \ref{prop:max_func_vanishing},  
\begin{equation}
f^M_\lambda = \tilde k^I_\lambda (1-\ol\lambda z) O
\end{equation}
is a maximal function in $\ker T_{G B_\lambda}$.
For $\lambda = 0$, we have that
\begin{equation}
F^M_0 = (I - I(0)) O = z \tilde k_0^I O
\end{equation}
is a maximal function in $\ker T_G$ vanishing at 0 and
\begin{equation}
f^M_0 = \tilde k_0^I O
\end{equation}
is a maximal function in $\ker T_{Gz}$.
\end{cor}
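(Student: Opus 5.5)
The corollary follows by combining Proposition \ref{prop:max_func_vanishing} with the converse part of Corollary \ref{cor:max_func_subkernel}, taking $\alpha = B_\lambda$.

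Proposition \ref{prop:max_func_vanishing} gives that
\begin{equation*}
F^M_\lambda = B_\lambda \, \tilde k^I_\lambda (1-\ol\lambda z) O
\end{equation*}
is a maximal function in $\ker T_G$. This function lies in $B_\lambda \Hp$. Indeed, $\tilde k^I_\lambda \in K_I \subset \Hp$, and $(1-\ol\lambda z)\in\Hinf$, so $\tilde k^I_\lambda(1-\ol\lambda z)O$ is in \Hp. Hence $F^M_\lambda \in B_\lambda \Hp$.

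Corollary \ref{cor:max_func_subkernel} is stated for a nontrivial subkernel $\kertg = \ker T_{G\alpha}$. So we first check that $\ker T_{GB_\lambda}\neq\zero$. By the result of \cite{CMP16} quoted above with $N=1$, this holds because $\dim\ker T_G>1$. Alternatively, it is immediate: $\ol{B_\lambda}F^M_\lambda$ is a nonzero element of $\ker T_{GB_\lambda}$, since
\begin{equation*}
GB_\lambda\,\ol{B_\lambda}F^M_\lambda = GF^M_\lambda \in \ol z\,\ol{\Hp}.
\end{equation*}
The converse part of Corollary \ref{cor:max_func_subkernel} then yields that
\begin{equation*}
f^M_\lambda=\ol{B_\lambda}F^M_\lambda=\tilde k^I_\lambda(1-\ol\lambda z)O
\end{equation*}
is maximal in $\ker T_{GB_\lambda}$.

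If one prefers to avoid relying on that corollary, Proposition \ref{prop:max_func_characterization} gives a direct check. Maximality of $F^M_\lambda$ means $GF^M_\lambda=\ol z\,\ol\OO$ with $\OO$ outer. Then
\begin{equation*}
(GB_\lambda)f^M_\lambda = GF^M_\lambda = \ol z\,\ol\OO,
\end{equation*}
so $f^M_\lambda$ is maximal in $\ker T_{GB_\lambda}$ by the same proposition.

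The case $\lambda=0$ is pure specialisation. Since $B_0=z$ and $\tilde k^I_0=(I-I(0))/z$, we get
\begin{equation*}
F^M_0=z\tilde k^I_0 O=(I-I(0))O,
\end{equation*}
which vanishes at $0$, and $f^M_0=\tilde k^I_0 O$ is maximal in $\ker T_{Gz}$.

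The only point needing any care is the non-triviality of the subkernel, and it is handled by exhibiting $\ol{B_\lambda}F^M_\lambda$ directly as above.
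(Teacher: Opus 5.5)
Your proof is correct and is essentially the argument the paper intends: the corollary is stated without proof, and it follows from Proposition \ref{prop:max_func_vanishing} by dividing out $B_\lambda$ via Corollary \ref{cor:max_func_subkernel} with $\alpha=B_\lambda$, or equivalently via Proposition \ref{prop:max_func_characterization}, as you do. One small fix: $\tilde k^I_\lambda\in K_I\subset\Hp$ alone does not put $\tilde k^I_\lambda(1-\ol\lambda z)O$ in $\Hp$, since a product of two $\Hp$ functions need not lie in $\Hp$; justify it instead by $\tilde k^I_\lambda\in\Hinf$ (it is bounded on $\TT$ because $|z-\lambda|\geq 1-|\lambda|$ there), or by noting that $(I-I(\lambda))O\in\Hp$ vanishes at $\lambda$ and is therefore divisible by $B_\lambda$ in $\Hp$.
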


The family of maximal functions \eqref{eq:max_func_with_zero} can be extended by taking an alternative approach, based on a factorisation of the inner factor of $F^M$, which will allow us to define maximal functions whose inner factor is divisible by a finite Blaschke product.

Any non-constant inner function $\alpha$ can be factorised as
\begin{equation}
\label{eq:mod_max_f_fact_alpha_lambda_mu}
\alpha = B_\lambda\ \frac{\tilde k^\alpha_\mu}{k^\alpha_\mu}\ \frac{k^\alpha_\mu (1-\ol\lambda z)}{\ol{k^\alpha_\mu} (1-\lambda\ol z)} \quad, \quad \lambda,\mu\in\DD\ ,
\end{equation}
where
\begin{equation}
\alpha_1 = \frac{\tilde k^\alpha_\mu}{k^\alpha_\mu}
\end{equation}
is an inner function (more precisely, $\alpha_1$ is the inner factor of $\tilde k^\alpha_\mu$) and $k^\alpha_\mu (1-\ol\lambda z) \in \GG\Hinf$;
\eqref{eq:mod_max_f_fact_alpha_lambda_mu} is called a {\em modified maximal function factorisation of $\alpha$} (\cite{CCD25}).

If $F^M = IO$ is the inner-outer factorisation of a maximal function $F^M$ in $\ker T_G$ and $\alpha$ is a non-constant inner function such that $\alpha \preceq I$, then we can write, for any $\lambda_1,\mu_1\in\DD$,
\begin{align}
\ker T_G & = \ker T_{\ol z \ol I \frac{\ol O}{O}} = \ker T_{\ol z \ol{(\frac{I}{\alpha})} \ol\alpha \frac{\ol O}{O}} = 
\ker T_{
\ol z \ol{(\frac{I}{\alpha})} \ol{B_{\lambda_1}} \ol{\left(
\frac{\tilde k^\alpha_{\mu_1}}{k^\alpha_{\mu_1}}
\right)} \frac{\ol{k^\alpha_{\mu_1}} (1-{\lambda_1} \ol z)}{k^\alpha_{\mu_1} (1-\ol{\lambda_1} z)} \frac{\ol O}{O}
} \notag \\
 & = k^\alpha_{\mu_1} (1-\ol{\lambda_1} z) \ker T_{ \ol z \ol{(\frac{I}{\alpha})} \ol{B_{\lambda_1}} \ol{\alpha_1} \frac{\ol O}{O} }\ .
\label{eq:big_equation}
\end{align}

Since, by \eqref{eq:unimod_symbol_from_max_func}, $\frac{I}{\alpha} B_{\lambda_1} \alpha_1 O$ is a maximal function in the kernel on the right-hand side of \eqref{eq:big_equation}, it follows from \cite[Theorem 3.2]{CP18_multipliers} that
\begin{equation}
\label{eq:big_family_max_f}
F^M_1 = k^\alpha_{\mu_1} (1-\ol{\lambda_1} z) \frac{I}{\alpha} B_{\lambda_1} \alpha_1 O = B_{\lambda_1} \frac{I}{\alpha} \tilde k^\alpha_{\mu_1} (1-\ol{\lambda_1} z) O
\end{equation}
is a maximal function in $\ker T_G$ vanishing at $\lambda_1$.
Note that, for $\alpha = I$ and $\mu_1 = \lambda_1$, we recover \eqref{eq:max_func_with_zero}.

We have thus shown the following.

\begin{prop}
\label{prop:max_func_subkernel_GB_1}
If $F^M = IO$ is a maximal function in $\ker T_G$ and $\alpha \preceq I$, where $\alpha$ is a non-constant inner function, then \eqref{eq:big_family_max_f} is a maximal function in $\ker T_G$, for any $\mu_1,\lambda_1\in\DD$, and
\begin{equation}
\frac{I}{\alpha} \tilde k^\alpha_{\mu_1} (1-\ol{\lambda_1} z) O
\end{equation}
is a maximal function in $\ker T_{G B_{\lambda_1}}$.
\end{prop}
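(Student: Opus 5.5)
The plan is to verify the two claims directly with the characterisation of maximal functions in Proposition \ref{prop:max_func_characterization}, rather than through the chain \eqref{eq:big_equation}. The second claim should then follow from the first by Corollary \ref{cor:max_func_subkernel}. Since $\kertg$ depends only on the unimodular symbol associated through \eqref{eq:unimod_symbol_from_max_func}, I may take $G=\ol z\ol I\,\ol O/O$, so that $\ker T_G$ is unchanged.

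First I establish the factorisation of $F^M_1$ in \eqref{eq:big_family_max_f}, namely
\begin{equation*}
F^M_1 = B_{\lambda_1}\tfrac{I}{\alpha}\,\tilde k^\alpha_{\mu_1}(1-\ol{\lambda_1}z)\,O .
\end{equation*}
This comes from \eqref{eq:mod_max_f_fact_alpha_lambda_mu}: it gives $\alpha_1 k^\alpha_{\mu_1}=\tilde k^\alpha_{\mu_1}$, and $k^\alpha_{\mu_1}(1-\ol{\lambda_1}z)\in\GG\Hinf$. Since $\tilde k^\alpha_{\mu_1}\in\Hinf$, the function $F^M_1$ lies in $\Hp$.

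Next I compute $G F^M_1$. Using $I\ol I=1$ and $B_{\lambda_1}(1-\ol{\lambda_1}z)=z-\lambda_1$, I get
\begin{equation*}
G F^M_1=\ol z\,\ol\alpha\,(z-\lambda_1)\,\tilde k^\alpha_{\mu_1}\,\ol O
=\ol z\,\ol\alpha\,(\alpha-\alpha(\mu_1))\,\tfrac{z-\lambda_1}{z-\mu_1}\,\ol O .
\end{equation*}
Using $\ol\alpha(\alpha-\alpha(\mu_1))=1-\alpha(\mu_1)\ol\alpha=\ol{(1-\ol{\alpha(\mu_1)}\alpha)}$, this becomes
\begin{equation*}
G F^M_1=\ol z\,\ol{\OO}\,\frac{z-\lambda_1}{z-\mu_1},\qquad \OO=(1-\ol{\alpha(\mu_1)}\alpha)\,O .
\end{equation*}
Here $\OO$ is outer, because $1-\ol{\alpha(\mu_1)}\alpha$ has positive real part (it is invertible in $\Hinf$ when $|\alpha(\mu_1)|<1$).

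The main obstacle is the factor $\frac{z-\lambda_1}{z-\mu_1}$. I must show that it can be absorbed as $\ol{h}$ with $h$ outer, and this rewriting needs care on $\TT$. There I rewrite $\frac{z-\lambda_1}{z-\mu_1}=\frac{1-\lambda_1\ol z}{1-\mu_1\ol z}$, which is the conjugate of $\frac{1-\ol{\lambda_1}z}{1-\ol{\mu_1}z}\in\GG\Hinf$, an outer function. It follows that $G F^M_1=\ol z\,\ol{\OO'}$ with $\OO'=\OO\,\frac{1-\ol{\lambda_1}z}{1-\ol{\mu_1}z}$, which is outer in $\Hp$. Proposition \ref{prop:max_func_characterization} then gives that $F^M_1$ is maximal in $\ker T_G$. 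Because the only delicate point is this $\GG\Hinf$ bookkeeping, I would check in particular that no factor of $z$ is lost, since the whole argument hinges on that.

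Finally, $F^M_1\in B_{\lambda_1}\Hp$, so the converse part of Corollary \ref{cor:max_func_subkernel} with $\alpha=B_{\lambda_1}$ shows that $\ol{B_{\lambda_1}}F^M_1=\frac{I}{\alpha}\tilde k^\alpha_{\mu_1}(1-\ol{\lambda_1}z)O$ is maximal in $\ker T_{GB_{\lambda_1}}$. Here $\ker T_{GB_{\lambda_1}}\neq\zero$ automatically, since it contains this function.
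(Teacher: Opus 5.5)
Your proof is correct, but it reaches the first claim by a different route from the paper. The paper never computes $GF^M_1$. Instead it inserts the modified maximal function factorisation \eqref{eq:mod_max_f_fact_alpha_lambda_mu} of $\alpha$ into the unimodular symbol $\ol z\,\ol I\,\ol O/O$, which gives \eqref{eq:big_equation}, namely $\ker T_G=k^\alpha_{\mu_1}(1-\ol{\lambda_1}z)\,\ker T_{\ol z\,\ol{(I/\alpha)}\,\ol{B_{\lambda_1}}\,\ol{\alpha_1}\,\ol O/O}$. It then reads off from \eqref{eq:unimod_symbol_from_max_func} that $\frac{I}{\alpha}B_{\lambda_1}\alpha_1 O$ is maximal in the smaller kernel. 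Finally it transfers maximality through multiplication by the $\GG\Hinf$ function $k^\alpha_{\mu_1}(1-\ol{\lambda_1}z)$, using \cite[Theorem 3.2]{CP18_multipliers}.

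You instead check the criterion of Proposition \ref{prop:max_func_characterization} directly, much as the paper does for Proposition \ref{prop:max_func_vanishing}. This is more elementary, since no multiplier theorem is needed. It also exhibits the outer function explicitly, $\OO'=(1-\ol{\alpha(\mu_1)}\alpha)\frac{1-\ol{\lambda_1}z}{1-\ol{\mu_1}z}\,O=k^\alpha_{\mu_1}(1-\ol{\lambda_1}z)\,O$, which is precisely the outer factor of $F^M_1$ in the paper's factorisation. In exchange, the paper's route explains where the family \eqref{eq:big_family_max_f} comes from. It also produces the kernel identity \eqref{eq:big_equation}, which is what gets iterated in Theorem \ref{thm:max_func_subkernel_gB} and reused in Section \ref{sec:Toeplitz_kernels_contained_in_model_spaces}. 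Your deduction of the second claim via Corollary \ref{cor:max_func_subkernel} matches the paper's implicit one.

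One minor point concerns your normalisation of $G$ to $\ol z\,\ol I\,\ol O/O$. It tacitly requires that $\ker T_{GB_{\lambda_1}}$ is also unchanged. That is true by Corollary \ref{cor:kernels_iguais}, since the ratio of the two symbols is unaffected by the factor $B_{\lambda_1}$. You can, however, drop the normalisation altogether. For the original $G$, Proposition \ref{prop:max_func_characterization} already gives $GIO=\ol z\,\ol{\OO_0}$ with $\OO_0$ outer, and your computation goes through verbatim with $O$ replaced by $\OO_0$ in $\OO'$.
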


Let now $I=\alpha$ in \eqref{eq:big_family_max_f} and define
\begin{equation}
I_1 = \frac{\tilde k^I_{\mu_1}}{k^I_{\mu_1}} \quad , \quad O_1 = k^I_{\mu_1} (1-\ol{\lambda_1} z) O
\end{equation}
so that we have the inner-outer factorisation
\begin{equation}
\label{eq:M}
F_1^M = B_{\lambda_1} I_1 O_1\ .
\end{equation}

If $\dim K_I \geq 2$, by Lemma \ref{lem:dim_K_In} below, we have that $\dim K_{I_1} \geq 1$ and so, applying Proposition \ref{prop:max_func_subkernel_GB_1} with $\alpha = I_1, \frac{I}{\alpha} = B_{\lambda_1}$ and $O$ replaced by $O_1$, we get from \eqref{eq:M} that, for any $\mu_2,\lambda_2\in\DD$,
\begin{equation}
F^M_2 = B_{\lambda_1} B_{\lambda_2} \ 
\annot{ \frac{\tilde k_{\mu_2}^{I_1}}{k_{\mu_2}^{I_1}} }{I_2} \ 
\annot{ k_{\mu_2}^{I_1} (1-\ol{\lambda_2} z) O_1 }{O_2}
= B_{\lambda_1} B_{\lambda_2} I_2 O_2
\end{equation}
is a maximal function in $\ker T_G$ whose inner factor is divisible by $B_{\lambda_1} B_{\lambda_2}$.

By repeating this reasoning, and taking Lemma \ref{lem:dim_K_In} into account, we get the following.

\begin{thm}
\label{thm:max_func_subkernel_gB}
Let $\dim \ker T_G > N$ and $B = B_{\lambda_1} B_{\lambda_2} \cdots B_{\lambda_N}$, with $\lambda_j\in\DD$ for $j=1,2,\dots, N$.

If $F^M = IO$ is the inner-outer factorisation of a maximal function $F^M$  in $\ker T_G$, with $\dim K_I \geq N$, then, for $I_N$ and $O_N$ defined by 
\begin{align}
I_0 	&= I \quad \qquad , \quad O_0 = O\ , \label{eq:def_In}\\
I_n &= \frac{\tilde k^{I_{n-1}}_{\lambda_n}}{k^{I_{n-1}}_{\lambda_n}}
\quad , \quad 
O_n = k^{I_{n-1}}_{\lambda_n} (1 - \ol{\lambda_n} z) O_{n-1}\ , \label{eq:def_On}
\end{align}
for $1\leq n \leq N$, we have that
\begin{equation}
\label{eq:max_func_vanishing_B}
F^M_B = B I_N O_N
\end{equation}
is also a maximal function in $\ker T_G$ and
\begin{equation}
f^M_B = I_N O_N
\end{equation}
is a maximal function in $\ker T_{G B}$.
\end{thm}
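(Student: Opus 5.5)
We argue by induction on $n$, proving for $0\leq n\leq N$ that
\begin{equation*}
F^M_n := B_{\lambda_1}\cdots B_{\lambda_n}\, I_n O_n
\end{equation*}
is a maximal function in $\ker T_G$ with inner-outer factorisation as displayed. Here $I_n$ is inner, $O_n$ is outer, and $\dim K_{I_n} \geq N-n$. The case $n=0$ is the hypothesis. The dimension claim is exactly what the forward-referenced Lemma \ref{lem:dim_K_In} should supply, namely $\dim K_{I_{n}} = \dim K_{I_{n-1}} - 1$ when that is finite, or $K_{I_n}$ infinite dimensional otherwise. In particular $I_{n-1}$ is non-constant whenever $n\leq N$, so the next step is defined.

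For the inductive step, assume the claim holds for $n-1<N$. Apply Proposition \ref{prop:max_func_subkernel_GB_1} to the maximal function $F^M_{n-1}$, whose inner factor is $B_{\lambda_1}\cdots B_{\lambda_{n-1}} I_{n-1}$, with outer factor $O_{n-1}$, with $\alpha = I_{n-1}$ (non-constant and dividing the inner factor), and with $\mu_n = \lambda_n$. That proposition, via \eqref{eq:big_family_max_f}, gives that
\begin{equation*}
B_{\lambda_n}\, B_{\lambda_1}\cdots B_{\lambda_{n-1}}\, \tilde k^{I_{n-1}}_{\lambda_n}(1-\ol{\lambda_n}z) O_{n-1}
\end{equation*}
is maximal in $\ker T_G$. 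Using \eqref{eq:ktil_IO_fact}, $\tilde k^{I_{n-1}}_{\lambda_n} = I_n\, k^{I_{n-1}}_{\lambda_n}$ with $I_n$ inner and $k^{I_{n-1}}_{\lambda_n}\in\GG\Hinf$. Since $(1-\ol{\lambda_n}z)\in\GG\Hinf$, the product $k^{I_{n-1}}_{\lambda_n}(1-\ol{\lambda_n}z)O_{n-1}=O_n$ is outer. Hence the function above equals $F^M_n$, and this completes the induction. Taking $n=N$ gives that $F^M_B = B I_N O_N$ is maximal in $\ker T_G$.

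For the second claim, $F^M_B\in B\Hp$, so the converse part of Corollary \ref{cor:max_func_subkernel} with $\alpha=B$ shows that $\ol B F^M_B = I_N O_N$ is a maximal function in $\ker T_{GB}$. This kernel is nontrivial since $\dim\ker T_G>N$.

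The main obstacle is the dimension bookkeeping (Lemma \ref{lem:dim_K_In}). We must show that the inner factor of $\tilde k^\theta_\lambda$ has model space of dimension one less than $K_\theta$, or still infinite. For finite Blaschke $\theta$ of degree $d$, $\tilde k^\theta_\lambda$ is rational of degree $\leq d-1$ with all its $d-1$ zeros in $\DD$ (a standard fact, equivalent to $\theta-\theta(\lambda)$ having $d$ zeros in $\DD$). Its inner factor is then a Blaschke product of degree $d-1$. In the infinite case, $\theta - \theta(\lambda)$ has inner factor equal to a Frostman shift of $\theta$, which is not a finite Blaschke product. Dividing out a single factor $(z-\lambda)$ keeps it infinite, and this gives the needed non-constancy at every step.
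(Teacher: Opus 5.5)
Your proof is correct and follows the paper's argument: you iterate Proposition~\ref{prop:max_func_subkernel_GB_1} with $\alpha=I_{n-1}$ and $\mu_n=\lambda_n$, regroup $\tilde k^{I_{n-1}}_{\lambda_n}=I_n k^{I_{n-1}}_{\lambda_n}$, and then pass to $\ker T_{GB}$ via Corollary~\ref{cor:max_func_subkernel}. The one difference is in the dimension bookkeeping. The paper's Lemma~\ref{lem:dim_K_In} obtains $\dim K_{I_1}=\dim K_I-1$ uniformly from $K_I=k^I_\mu\,\ker T_{\ol z\,\ol{I_1}}=k^I_\mu K_{zI_1}$. Your zero count for finite Blaschke products, together with the Frostman-shift argument in the infinite case, is equally valid, just split into cases.
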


\begin{lem}
\label{lem:dim_K_In}
Let $I$ be an inner function with $\dim K_I \geq N$ and let $I_n$ be defined, for $1 \leq n \leq N$, by \eqref{eq:def_In}-\eqref{eq:def_On}.
Then
\begin{equation}
\dim K_{I_{N-1}} \geq 1.
\end{equation}
\end{lem}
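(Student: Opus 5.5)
The proof will go by induction, and its engine is a dimension-drop claim: if $J$ is an inner function and $\mu\in\DD$, then $J_1 := \tilde k^J_\mu / k^J_\mu$ satisfies $\dim K_{J_1} \geq \dim K_J - 1$. If $\dim K_J = \infty$, the claim should read $\dim K_{J_1} = \infty$. Granting the claim, we apply it successively with $J = I_{n-1}$ and $\mu = \lambda_n$ for $n = 1,\dots,N-1$. This gives $\dim K_{I_n} \geq \dim K_I - n$, and in particular $\dim K_{I_{N-1}} \geq N-(N-1) = 1$.

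To prove the claim, start from the identity $(z-\mu)\tilde k^J_\mu = J - J(\mu)$. Multiplying by $\ol{k^J_\mu}$ and using the inner-outer factorisation \eqref{eq:ktil_IO_fact}, we get
\begin{equation*}
J = \frac{1-\ol\mu z}{1-\mu \ol z}\, \frac{\ol{k^J_\mu}}{k^J_\mu}\, B_\mu J_1 ,
\end{equation*}
which is the factorisation \eqref{eq:mod_max_f_fact_alpha_lambda_mu} with $\lambda=\mu$. Hence
\begin{equation*}
\ol J = \ol{B_\mu}\,\ol{J_1}\, \frac{\ol h}{h}, \qquad h := k^J_\mu(1-\ol\mu z)\in\GG\Hinf .
\end{equation*}
As in \eqref{eq:big_equation}, this gives $K_J = \ker T_{\ol J} = h \ker T_{\ol{B_\mu J_1}} = h K_{B_\mu J_1}$. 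Multiplication by $h\in\GG\Hinf$ is injective, so $\dim K_J = \dim K_{B_\mu J_1}$. Finally we use the orthogonal decomposition $K_{B_\mu J_1} = K_{B_\mu} \oplus B_\mu K_{J_1}$ and $\dim K_{B_\mu}=1$. These give $\dim K_{J_1} = \dim K_J - 1$ when $\dim K_J<\infty$, and $\dim K_{J_1}=\infty$ otherwise.

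The only step needing care is the identity $\ker T_{\ol J} = h\ker T_{\ol J \ol h^{-1} h}$ for $h\in\GG\Hinf$. This is the standard fact $\ker T_{g\ol{h_-}/h_+} = h_+\ker T_g$ for $h_\pm\in\GG\Hinf$, already used in \eqref{eq:big_equation}. Once the claim is established, the induction is immediate. When $\dim K_I$ is infinite, every $I_n$ has infinite-dimensional model space, and the conclusion still holds. Note that the claim actually gives the stronger statement $\dim K_{I_n}\ge 1$ for all $n\le N-1$, which is what the iteration preceding Theorem \ref{thm:max_func_subkernel_gB} requires at each step.
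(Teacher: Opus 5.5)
Your proof is correct and follows essentially the paper's route. The paper gets $K_I = k^I_\mu K_{zI_1}$ from the unimodular symbol \eqref{eq:unimod_symbol_from_max_func} of the maximal function $\tilde k^I_\mu$. You derive the equivalent identity $K_J = k^J_\mu(1-\ol\mu z)K_{B_\mu J_1}$ by direct computation, and both arguments then count dimensions.

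There is one harmless slip. In your first display the unimodular factor should be $k^J_\mu/\ol{k^J_\mu}$, not $\ol{k^J_\mu}/k^J_\mu$. Your next line, $\ol J = \ol{B_\mu}\,\ol{J_1}\,\ol h/h$, is nevertheless correct.
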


\begin{proof}
For any inner function $I$ with $\dim K_I \geq N$ and any $\mu\in\DD$, $\tilde k^I_\mu$ is a maximal function in $K_I$ with inner-outer factorisation $\tilde k^I_\mu = \frac{\tilde k^I_\mu}{k^I_\mu} k^I_\mu = I_1 k^I_\mu$.
So, by \eqref{eq:unimod_symbol_from_max_func}, $K_I = \ker T_{\ol z \ol{I_1} \frac{\ol{k^I_\mu}}{k^I_\mu}} = k^I_\mu \ker T_{\ol z \ol{I_1}}$.
It follows that $\dim K_{I_1} \geq N-1$.
By repeating this argument we obtain the result.
\end{proof}

\begin{rem}
Theorem \ref{thm:max_func_subkernel_gB} can also be used to obtain a maximal function in $\ker T_{G \alpha}$ where $\alpha$ is not a finite Blaschke product, but is related to $I$ by
\begin{equation}
I = \alpha \gamma \ol B
\end{equation}
where $\gamma$ is an inner function dividing $I$ and $B$ is a finite Blaschke product.
Then $\ker T_{G \alpha} = \ker T_{g B}$ with $g = G I \ol\gamma$ and, if $F^M_G = IO$ is the inner-outer factorisation of a maximal function in $\ker T_G$, then, by Proposition \ref{prop:max_func_charact_with_conj}, $F^M_g = \gamma O$ is a maximal function in $\ker T_g$, to which we can apply Proposition \ref{prop:max_func_subkernel_times_u+_gives_max_func_big_kernel} in order to obtain a maximal function in $\ker T_{G \alpha}$.
\end{rem}

\begin{ex}
We illustrate the results of Theorem  \ref{thm:max_func_subkernel_gB} in the case where $\ker T_G = K_\theta$, with $\theta$ inner (non-constant).
We have that, for any $\lambda\in\DD$, \kktil\ is a maximal function in $K_\theta$, with inner-outer factorisation given by \eqref{eq:ktil_IO_fact}.
So, since $\dim K_\theta >1$, by Proposition \ref{prop:max_func_vanishing}, a maximal function in $K_\theta$ vanishing at a given point $\lambda_1 \in \DD$ is 
\begin{align}
F^M_{\lambda_1} & = B_{\lambda_1} \frac{\tilde k^I_{\lambda_1}}{k^I_{\lambda_1}} (1- \ol{ I(\lambda_1) } I ) \kk = ( I - I(\lambda_1) ) \kk \nonumber \\
& = \frac{1}{\kk(\lambda_1)} [ \kk(\lambda_1) \kktil - \kktil(\lambda_1) \kk ]\ ,
\end{align}
where
\begin{equation}
I= \frac{\kktil}{\kk}\ .
\end{equation}

A maximal function in $\ker T_{\ol\theta B_{\lambda_1}}$ is thus
\begin{equation}
f^M_{\lambda_1} = [ \kk(\lambda_1) \kktil - \kktil(\lambda_1) \kk ] \ \ol{ B_{\lambda_1} }\ .
\end{equation}
In particular, a maximal function in $\ker T_{\ol\theta z}$ is
\begin{equation}
f^M_0 =  [ k^\theta_0(0) \tilde k^\theta_0 - \tilde k^\theta_0(0) k^\theta_0 ] \ \ol z\ .
\end{equation}

Assuming that $\dim K_\theta >N$, a maximal function in $K_\theta$ with a zero of order at least $N$ at the point $0$ is, by \eqref{eq:max_func_vanishing_B},
\begin{equation}
F^M_{z^N} = z^N I_N k_0^\theta k_0^{I_0} \cdots k_0^{I_{N-1}}
\end{equation}
where $I_n$ , for $n=1,\dots,N$, is defined by
\begin{equation}
I_0 = \frac{\tilde k_0^\theta}{k_0^\theta} \quad , \quad I_n = \frac{\tilde k_0^{I_{n-1}}}{k_0^{I_{n-1}}}\ ;
\end{equation}
therefore, $\ol z^N F^M_{z^N}$ is a maximal function in $\ker T_{\ol\theta z^N}$.
\end{ex}


\section{Multipliers on subkernels}
\label{sec:multipliers_subkernels}

We now study the question of how multipliers between two Toeplitz kernels act on subkernels and, in particular, whether they map subkernels onto subkernels.

If $\kertg \subset \ker T_G$ and $w$ is a multiplier from $\ker T_G$ \emph{into} $\ker T_H$, i.e., $w \ker T_G \subset \ker T_H$, then we have that $w \kertg \subset \ker T_H $, but $w \kertg$ may not be a subkernel of $T_H$.
As a concrete example, if $\alpha$ and $\theta$ are inner functions, $\alpha$ multiplies $K_\theta$ into $K_{\theta\alpha}$, but $\alpha K_\theta$ is not a Toeplitz kernel (\cite{CP14,CP24}).
However, as we show next, the situation is different if $w$ multiplies $\ker T_G$ \emph{onto} $\ker T_H$.

We have the following characterisation of multipliers from one Toeplitz kernel onto another.
Note that such multipliers must belong to the Smirnov class \NN, along with their inverses (\cite{CP18_multipliers}).

\begin{thm}[{\cite{CP18_multipliers}}]
\label{thm:multiplicadores_onto_kernels}
Let $w^{\pm 1} \in \NN$.
Then the following are equivalent:
\begin{enumerate}[(i)]
\item $w \ker T_g = \ker T_h$;

\item $w \in \CCC (\ker T_g)$, $w^{-1} \in \CCC (\ker T_h)$ and $\frac{h}{g} = \frac{\ol w}{w} \frac{\ol O_1}{\ol O_2}$ with $O_1,O_2 \in \Hp$, outer;

\item $w \in \CCC (\ker T_g)$, $w^{-1} \in \CCC (\ker T_h)$ and, for some (and hence every) maximal function $f^M_g$ in $\ker T_g$, we have that $w f^M_g$ is a maximal function in $\ker T_h$.
\end{enumerate}

Moreover,
\begin{equation}
\label{eq:simbolo_vindo_de_multiplicador}
\ker T_h = w \ker T_g \implies \ker T_h = \ker T_{g\frac{\ol w}{w}}\ .
\end{equation}

\end{thm}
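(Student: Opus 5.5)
We prove the equivalences through the chain (i)$\Rightarrow$(iii)$\Rightarrow$(ii)$\Rightarrow$(i), and then derive \eqref{eq:simbolo_vindo_de_multiplicador} from (ii). Throughout we fix a maximal function $f^M_g$ in $\ker T_g$ and, by Proposition~\ref{prop:max_func_characterization}, write
\begin{equation*}
g f^M_g = \ol z \ol{\OO_g}, \qquad \OO_g \in \Hp \text{ outer.}
\end{equation*}

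(i)$\Rightarrow$(iii). Since $w\ker T_g = \ker T_h \subset L^2$ and $w^{-1}\ker T_h = \ker T_g \subset L^2$, the two Carleson conditions hold. The function $wf^M_g$ lies in $\ker T_h$. Suppose $wf^M_g \in \ker T_{h'}$ for some $h'\in\Linf$. Then $w^{-1}\in\NN$ and $w^{-1}\in\CCC(\ker T_h)$, so Theorem~\ref{thm:multiplicadores_into_kernels} (iii)$\Rightarrow$(i) would give $w^{-1}\ker T_h\subset\ker T_{h'}$, provided we knew that $wf^M_g$ is maximal in $\ker T_h$. That is circular, so we argue directly instead. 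We have $\ker T_g = \KK_{min}(f^M_g)$. Apply Theorem~\ref{thm:multiplicadores_into_kernels} with the multiplier $w^{-1}$ and the space $\ker T_h$: we want to show $\ker T_h \subset \ker T_{h'}$, i.e.\ $\ker T_g\subset w^{-1}\ker T_{h'}$. Note that $w^{-1}\ker T_{h'}\cap\Hp$ contains $f^M_g$. The cleanest route is symbolic: $wf^M_g\in\ker T_{h'}$ means $h' w f^M_g\in\ol{z}\,\ol{\Hp}$, hence $h'w/g\cdot \ol z\,\ol{\OO_g}\in \ol z\,\ol{\Hp}$, which gives $h'w/g\in\ol\NN$. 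For $f\in\ker T_g$ we also have $gf\in \ol z\,\ol{\NN}$, and then $h'wf = (h'w/g)\,gf\in\ol z\,\ol\NN\cap L^2$ (using $wf\in\ker T_h\subset\Hp$). Smirnov's theorem then yields $h'wf\in\Hm$. Hence $w\ker T_g\subset\ker T_{h'}$, i.e.\ $\ker T_h\subset \ker T_{h'}$, so $wf^M_g$ is maximal.

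(iii)$\Rightarrow$(ii). By Proposition~\ref{prop:max_func_characterization}, $h w f^M_g = \ol z\,\ol{\OO_h}$ with $\OO_h$ outer. Dividing by $g f^M_g=\ol z\,\ol{\OO_g}$ gives $hw/g = \ol{\OO_h}/\ol{\OO_g}$. Therefore
\begin{equation*}
\frac{h}{g}=\frac{\ol w}{w}\,\frac{\ol{\OO_h}}{\ol w\,\ol{\OO_g}}.
\end{equation*}
Write $w = w_i w_o$ with $w_o$ outer in $\NN$ and $w_i$ a quotient of inner functions; this is possible since $w^{\pm1}\in\NN$ forces the inner parts to cancel, so $w$ is in fact outer. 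Absorbing $w_o$ into the outer factors, we obtain outer $O_1,O_2$ with $h/g=(\ol w/w)\,\ol{O_1}/\ol{O_2}$. The main obstacle I expect is the integrability in this step: placing $O_1$ and $O_2$ in $\Hp$ requires the Carleson hypotheses, since $wf^M_g\in L^2$ and $w^{-1}(wf^M_g)\in L^2$. I would use these to write $O_1 = \OO_h$ and $O_2 = w\OO_g$ as the outer part of $w\OO_g\in\NN$, which is in $\Hp$ by the Carleson hypothesis applied through the conjugation identity of Proposition~\ref{prop:max_func_characterization}.

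(ii)$\Rightarrow$(i). From (ii) we get $wh/g = \ol w\,\ol{O_1}/\ol{O_2}\in\ol\NN$, so Theorem~\ref{thm:multiplicadores_into_kernels} gives $w\ker T_g\subset\ker T_h$. Symmetrically, $w^{-1}g/h = \ol{w^{-1}}\,\ol{O_2}/\ol{O_1}\in\ol\NN$, which gives $w^{-1}\ker T_h\subset\ker T_g$. Together these yield equality.

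Finally, for \eqref{eq:simbolo_vindo_de_multiplicador}, (ii) gives $h = g\frac{\ol w}{w}\frac{\ol{O_1}}{\ol{O_2}}$. Corollary~\ref{cor:kernel.subset.of.another.kernel}, applied in both directions, shows that multiplying a symbol by $\ol{O_1}/\ol{O_2}$ (with $O_1/O_2$ and $O_2/O_1$ in $\NN$) does not change the kernel. Hence $\ker T_h=\ker T_{g\ol w/w}$.
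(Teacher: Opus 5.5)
The paper gives no proof of this theorem; it is quoted from \cite{CP18_multipliers}, so there is no in-text argument to compare with. Your argument is correct.

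It differs from the shortest route available with the tools of Section~\ref{sec:Toeplitz_subkernels}. That route splits (i) into the two inclusions $w\kertg\subset\ker T_h$ and $w^{-1}\ker T_h\subset\kertg$, and applies Theorem~\ref{thm:multiplicadores_into_kernels} to each. Then (i)$\Rightarrow$(ii) reduces to the observation that $wh/g=\ol\phi$ with $\phi^{\pm1}\in\NN$. Hence $\phi$ is outer, and $h/g=\frac{\ol w}{w}\,\ol{(\phi/w)}$ with $\phi/w$ outer. For (iii)$\Rightarrow$(i), apply Theorem~\ref{thm:multiplicadores_into_kernels}(iii)$\Rightarrow$(i) twice: to $w$ with the maximal function $f^M_g$, and to $w^{-1}$ with the maximal function $wf^M_g$ of $\ker T_h$. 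That route uses Theorem~\ref{thm:multiplicadores_into_kernels} as a black box and makes the symmetry $w\leftrightarrow w^{-1}$ visible. Your cycle through Proposition~\ref{prop:max_func_characterization} has a different advantage: it produces the outer functions in (ii) explicitly from the maximal functions, $O_1=\OO_h$ and $O_2=w\OO_g$. The cycle also gives the ``hence every'' clause for free.

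Some clean-up:
\begin{itemize}
\item In (i)$\Rightarrow$(iii) there is no circularity. Theorem~\ref{thm:multiplicadores_into_kernels}(iii)$\Rightarrow$(i), with multiplier $w$, maximal function $f^M_g$ and target symbol $h'$, gives $w\kertg\subset\ker T_{h'}$ directly from $w\in\CCC(\kertg)$ and $wf^M_g\in\ker T_{h'}$. Your Smirnov computation re-proves exactly this implication, so delete the abandoned attempts.
\item In (iii)$\Rightarrow$(ii), write out the estimate $|w\OO_g|=|g|\,|wf^M_g|\le\|g\|_\infty|wf^M_g|$. Together with Smirnov's theorem it gives $w\OO_g\in\Hp$. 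Integrability is not really an obstacle, though: every outer function is a quotient of two bounded outer functions.
\item Fixing $f^M_g$ tacitly uses $\kertg\neq\zero$. The statement needs this anyway: with $g=1$, $h=z$, $w=1$, (i) holds but (ii) fails.
\end{itemize}
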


For $w=1$, we obtain from Theorem \ref{thm:multiplicadores_onto_kernels} necessary and  sufficient conditions for two Toeplitz operators to have the same kernel.

\begin{cor}[{\cite[Corollary 2.19]{CP18_multipliers}}]
\label{cor:kernels_iguais}
$\ker T_g = \ker T_h \iff \frac{h}{g} = \frac{\ol O_1}{\ol O_2}$ for some $O_1,O_2\in\Hp$ outer.

If $g\in\GG\Linf$ then $\ker T_g = \ker T_h \iff h = g \ol{O}$ with $O\in\Hinf$, outer.
In particular, if $|g| = |h| = 1$, then 
\begin{equation}
\kertg = \kerth \iff h = \lambda g \text{ with } \lambda\in\CC, |\lambda| =1.
\end{equation}
\end{cor}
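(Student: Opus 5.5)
The first equivalence comes directly from Theorem \ref{thm:multiplicadores_onto_kernels} with $w=1$. The function $w=1$ lies in $\NN$ together with its inverse, and it belongs to $\CCC(\kertg)$ and to $\CCC(\kerth)$ trivially, because $\kertg,\kerth\subset\Hp\subset L^2$. Condition (i) of that theorem, $\kertg=\kerth$, is then equivalent to condition (ii) with $\ol w/w=1$, which reads $h/g=\ol{O_1}/\ol{O_2}$ with $O_1,O_2\in\Hp$ outer. This settles the first statement. If one prefers not to depend on that theorem, the direction ($\Leftarrow$) can be checked by hand: from $gf_+=f_-\in\Hm$ one gets $\ol z\,\ol{O_2}\,hf_+=\ol z\,\ol{O_1}\,f_-$. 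This lies in $\ol z\ol{\NN}$ and in $L^1$, so by Smirnov's theorem $hf_+\in\Hm$. The reverse inclusion follows by symmetry.

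For the case $g\in\GG\Linf$, start from the first part: $h/g=\ol{O_1}/\ol{O_2}$ with $O_1,O_2\in\Hp$ outer. Set $O=O_1/O_2$, which is outer and lies in $\NN$. Since $h\in\Linf$ and $g^{-1}\in\Linf$, we have $|O|=|h/g|\in\Linf$. Smirnov's theorem (a function in $\NN$ with $L^\infty$ boundary values is in $\Hinf$) then gives $O\in\Hinf$, and $h=g\,\ol O$. For the converse, if $h=g\ol O$ with $O\in\Hinf$ outer, write $O=O/1$ with both $O$ and $1$ in $\Hp$ outer and apply the first part. The one point to be careful about is exactly this use of Smirnov's theorem, since it is what upgrades a quotient of $\Hp$ outer functions to an element of $\Hinf$.

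For the unimodular case, suppose $|g|=|h|=1$ and $\kertg=\kerth$. Then $g\in\GG\Linf$, so $h=g\ol O$ with $O\in\Hinf$ outer and $|O|=|h|/|g|=1$ almost everywhere on $\TT$. An outer function of constant modulus $1$ is a unimodular constant: its modulus is $\exp$ of the Poisson integral of $\log|O|=0$. Hence $O=\ol\lambda$ and $h=\lambda g$ with $|\lambda|=1$. The converse is immediate, because $\ker T_{\lambda g}=\kertg$ for any $\lambda\neq0$.
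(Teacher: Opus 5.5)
Your argument is correct and follows the paper's route. The first equivalence is exactly Theorem~\ref{thm:multiplicadores_onto_kernels} with $w=1$, and your Smirnov-class step ($O=O_1/O_2\in\NN$ with $|O|=|h/g|\in\Linf$, hence $O\in\Hinf$) together with the fact that a unimodular outer function is constant supplies the details the paper leaves to the citation.

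One caveat should be stated explicitly: the forward implications tacitly require $\kertg\neq\zero$, as in the standing hypothesis behind Theorem~\ref{thm:multiplicadores_onto_kernels}. For example, with $g=z$ and $h=z^2$ both kernels are trivial, yet $h/g=z$ is not of the form $\ol{O_1}/\ol{O_2}$ and $h\neq\lambda g$.
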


It follows from Theorem \ref{thm:multiplicadores_onto_kernels}, Corollary \ref{cor:kernels_iguais} and \eqref{eq:simbolo_vindo_de_multiplicador} that the question when is $w\kertg$ a Toeplitz kernel, assuming $w^{\pm 1} \in\NN$, is reduced to the question when is $\ker T_{g\frac{\ol w}{w}} = w \kertg$.
For subkernels of a Toeplitz operator, we have the following.

\begin{prop}
\label{prop:multiplier_between_subkernels}
Let $\zero\neq \kertg \subset \ker T_G$ and suppose that $w \ker T_G = \ker T_H$.
Then $w \kertg$ is a subkernel of $T_H$ and we have
\begin{equation}
\label{eq:multiplier_between_subkernels}
w \kertg = \ker T_{g \frac{\ol w}{w}}\ .
\end{equation}
\end{prop}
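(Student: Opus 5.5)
We will apply Theorem \ref{thm:multiplicadores_onto_kernels} to the pair $(\kertg, w\kertg)$ in the direction (iii)$\Rightarrow$(i). By Corollary \ref{cor:subkernel_galpha} we first write $\kertg = \ker T_{G\alpha}$ for some inner function $\alpha$. The candidate kernel is then $\ker T_{g\frac{\ol w}{w}}$. Since $\ker T_G=\ker T_H$ forces $\ker T_H = \ker T_{G\frac{\ol w}{w}}$ by \eqref{eq:simbolo_vindo_de_multiplicador}, we may replace $H$ by $G\frac{\ol w}{w}$. Corollary \ref{cor:subkernel_galpha} also gives $g = G\alpha \ol{O_1}/\ol{O_2}$, so $g\frac{\ol w}{w} = H\alpha\,\ol{O_1}/\ol{O_2}$, and by Corollary \ref{cor:kernels_iguais} $\ker T_{g\frac{\ol w}{w}} = \ker T_{H\alpha}$. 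This is a subkernel of $T_H$ by Corollary \ref{cor:kernel.subset.of.another.kernel}, because $H/(H\alpha)=\ol\alpha\in\ol\NN$.

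Next we check the hypotheses of Theorem \ref{thm:multiplicadores_onto_kernels}(iii) for $w$ acting from $\kertg$ to $\ker T_{H\alpha}$. We have $w^{\pm1}\in\NN$ because $w$ is an onto multiplier between $\ker T_G$ and $\ker T_H$. Since $\kertg\subset\ker T_G$ and $w\ker T_G=\ker T_H\subset L^2$, we get $w\in\CCC(\kertg)$. For the inverse, $\ker T_{H\alpha}\subset \ker T_H$ and $w^{-1}\ker T_H = \ker T_G\subset L^2$, so $w^{-1}\in\CCC(\ker T_{H\alpha})$.

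It remains to show that $w f^M_g$ is maximal in $\ker T_{H\alpha}$ for one maximal function $f^M_g$ of $\kertg$. By Proposition \ref{prop:max_func_characterization} we can write $g f^M_g = \ol z\,\ol\OO$ with $\OO$ outer, and then
\[
g\frac{\ol w}{w}\,(w f^M_g) = \ol w\, g f^M_g = \ol z\,\ol{w\OO}.
\]
This is the step I expect to be the main obstacle: we need $w\OO$ to be an outer function in $\Hp$. It is outer in the Smirnov sense because $w^{\pm1}\in\NN$, but membership in $\Hp$ has to be justified. I would get it by passing through the big kernel. By Corollary \ref{cor:max_func_subkernel}, $\alpha f^M_g$ is maximal in $\ker T_G = \ker T_{G\alpha\ol\alpha}$. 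By Theorem \ref{thm:multiplicadores_onto_kernels}(iii) for $w\ker T_G=\ker T_H$, the function $w\alpha f^M_g$ is maximal in $\ker T_H$. Proposition \ref{prop:max_func_characterization} then gives $H w\alpha f^M_g = \ol z\,\ol{\OO'}$ with $\OO'\in\Hp$ outer, and $H w\alpha f^M_g = H\alpha\,(w f^M_g)$.

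Finally, $w f^M_g = \ol\alpha\,(w\alpha f^M_g)\in \alpha\Hp\cdot\ol\alpha\subset \Hp$, since $w\kertg\subset\ker T_H\subset \Hp$. Hence Proposition \ref{prop:max_func_characterization} applied to the symbol $H\alpha$ shows that $w f^M_g$ is maximal in $\ker T_{H\alpha}$. Theorem \ref{thm:multiplicadores_onto_kernels} (iii)$\Rightarrow$(i) now yields $w\kertg=\ker T_{H\alpha}=\ker T_{g\frac{\ol w}{w}}$, which is \eqref{eq:multiplier_between_subkernels}.
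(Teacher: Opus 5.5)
Your proof is correct and follows essentially the same route as the paper. You write $\kertg=\ker T_{G\alpha}$, identify $\ker T_{g\frac{\ol w}{w}}$ with $\ker T_{H\alpha}$, and check the two Carleson conditions via $\kertg\subset\ker T_G$ and $\ker T_{H\alpha}\subset\ker T_H$. You then transport a maximal function along $f^M_g\mapsto\alpha f^M_g\mapsto w\alpha f^M_g\mapsto wf^M_g$ through the big kernels before invoking Theorem \ref{thm:multiplicadores_onto_kernels}(iii)$\Rightarrow$(i).

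The differences are cosmetic. In the last step you apply Proposition \ref{prop:max_func_characterization} directly where the paper cites the converse half of Corollary \ref{cor:max_func_subkernel}; this is the same computation, and your check $wf^M_g\in\Hp$ is exactly that corollary's hypothesis $w\alpha f^M_g\in\alpha\Hp$. Also, the phrase ``$\ker T_G=\ker T_H$'' in your first paragraph should read $w\ker T_G=\ker T_H$.
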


\begin{proof}
From the assumptions and Theorem \ref{thm:multiplicadores_onto_kernels} we have that $w^{\pm 1} \in\NN$ and $w \kertg \subset w \ker T_G \subset L^2$.
On the other hand, $\ker T_H = \ker T_{G \frac{\ol w}{w}}$, by \eqref{eq:simbolo_vindo_de_multiplicador}, and $\kertg = \ker T_{G\alpha}$ for some inner function $\alpha$ by Corollary \ref{cor:subkernel_galpha}, so
\begin{equation}
w^{-1} \ker T_{g \frac{\ol w}{w}} = w^{-1} \ker T_{G \alpha \frac{\ol w}{w}} \subset w^{-1} \ker T_{G \frac{\ol w}{w}} = w^{-1} \ker T_H \subset L^2.
\end{equation}
Moreover, if $f^M_g$ is a maximal function in \kertg, then $\alpha f^M_g$ is a maximal function in $\ker T_G$, by Corollary \ref{cor:max_func_subkernel}, so $w\alpha f^M_g$ is a maximal function in $\ker T_H$, by Theorem \ref{thm:multiplicadores_onto_kernels}(iii).
Applying Corollary \ref{cor:max_func_subkernel} again, $w f^M_g$ is a maximal function in $\ker T_{H \alpha} = \ker T_{G \alpha \frac{\ol w}{w}} = \ker T_{g \frac{\ol w}{w}}$.
It follows from Theorem \ref{thm:multiplicadores_onto_kernels}(iii) that $w \kertg = \ker T_{g \frac{\ol w}{w}}$.
\end{proof}

Thus we see that a multiplier from a Toeplitz kernel onto another Toeplitz kernel maps, by multiplication, a subkernel of the first Toeplitz operator onto a subkernel of the second Toeplitz operator.

A simple but important consequence is the following.
Recall that
\begin{equation}
\kertg \subset \ker T_G \iff \kertg = \ker T_{G \alpha} \text{ for some inner function } \alpha.
\end{equation}

\begin{cor}
\label{cor:multiplier_between_subkernels_v2}
If $w \ker T_G = \ker T_H$ then $w \ker T_{G \alpha} = \ker T_{H \alpha}$, for any inner function $\alpha$.
\end{cor}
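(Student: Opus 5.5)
The plan is to apply Proposition \ref{prop:multiplier_between_subkernels} with $g = G\alpha$, and then rewrite the resulting symbol using \eqref{eq:simbolo_vindo_de_multiplicador}.

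First, dispose of the trivial case. Suppose $\ker T_{G\alpha} = \zero$. We want $\ker T_{H\alpha} = \zero$ as well. By Theorem \ref{thm:multiplicadores_onto_kernels}, $w^{\pm1}\in\NN$ and $\ker T_H = \ker T_{G\frac{\ol w}{w}}$. If $G\frac{\ol w}{w}$ is not bounded, I will not rely on the symbol form and will argue symmetrically instead. Since $w^{-1}\ker T_H = \ker T_G$, the roles of $(G,w)$ and $(H,w^{-1})$ can be swapped. So if $\ker T_{H\alpha}\neq\zero$, the nontrivial case below, applied with $w^{-1}$, gives $w^{-1}\ker T_{H\alpha} = \ker T_{G\alpha}\neq\zero$, a contradiction. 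Hence it suffices to treat $\ker T_{G\alpha}\neq\zero$.

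Now assume $\ker T_{G\alpha}\neq\zero$. Since $\alpha$ is inner, $\ker T_{G\alpha}\subset\ker T_G$ by Corollary \ref{cor:subkernel_galpha}; directly, $\ol\alpha\in\ol\NN$, so Corollary \ref{cor:kernel.subset.of.another.kernel} applies. Proposition \ref{prop:multiplier_between_subkernels} with $g=G\alpha$ then gives
\begin{equation*}
w\ker T_{G\alpha} = \ker T_{G\alpha\frac{\ol w}{w}}.
\end{equation*}
It remains to show $\ker T_{G\alpha\frac{\ol w}{w}} = \ker T_{H\alpha}$. The proof of Proposition \ref{prop:multiplier_between_subkernels} already did this: there, $\ker T_{H\alpha} = \ker T_{G\alpha\frac{\ol w}{w}}$ was used.

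To justify that identity cleanly, I will use Theorem \ref{thm:multiplicadores_onto_kernels}(ii). It gives $\frac{H}{G} = \frac{\ol w}{w}\frac{\ol{O_1}}{\ol{O_2}}$ with $O_1,O_2$ outer, so
\begin{equation*}
\frac{H\alpha}{G\alpha\frac{\ol w}{w}} = \frac{\ol{O_1}}{\ol{O_2}}.
\end{equation*}
Corollary \ref{cor:kernels_iguais} then yields equality of the two kernels.

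The main obstacle is bookkeeping: the symbol $G\alpha\frac{\ol w}{w}$ may not lie in $\Linf$, so Corollary \ref{cor:kernels_iguais} has to be read in the generalised sense used in Proposition \ref{prop:multiplier_between_subkernels} and \eqref{eq:simbolo_vindo_de_multiplicador}. Alternatively, one can bypass symbols altogether. Take a maximal function $f^M$ in $\ker T_{G\alpha}$. By Corollary \ref{cor:max_func_subkernel}, $\alpha f^M$ is maximal in $\ker T_G$. By Theorem \ref{thm:multiplicadores_onto_kernels}(iii), $w\alpha f^M$ is maximal in $\ker T_H$. Corollary \ref{cor:max_func_subkernel} then shows that $wf^M$ is maximal in $\ker T_{H\alpha}$. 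The Carleson conditions hold because $w\ker T_{G\alpha}\subset\ker T_H\subset L^2$ and $w^{-1}\ker T_{H\alpha}\subset w^{-1}\ker T_H = \ker T_G\subset L^2$. Theorem \ref{thm:multiplicadores_onto_kernels}(iii) therefore gives $w\ker T_{G\alpha}=\ker T_{H\alpha}$ directly, and I would present this route as the main proof.
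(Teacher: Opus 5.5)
Your proposal is correct, and your first (symbol) route is precisely the paper's proof. The paper applies Proposition~\ref{prop:multiplier_between_subkernels} to get $w\ker T_{G\alpha}=\ker T_{G\alpha\frac{\ol w}{w}}$. It then uses $\ker T_H=\ker T_{G\frac{\ol w}{w}}$ and Corollary~\ref{cor:kernels_iguais} to write $H=G\frac{\ol w}{w}\frac{\ol{O_1}}{\ol{O_2}}$, so that $\ker T_{H\alpha}=\ker T_{G\alpha\frac{\ol w}{w}}$.

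The route you promote to the main proof instead inlines the maximal-function argument from the proof of Proposition~\ref{prop:multiplier_between_subkernels} and aims it directly at $\ker T_{H\alpha}$. This removes the need for any symbol comparison. Combined with your symmetry trick (swapping $(G,w)$ and $(H,w^{-1})$), it also handles $\ker T_{G\alpha}=\zero$. The paper's proof passes over that case, since Proposition~\ref{prop:multiplier_between_subkernels} assumes a nontrivial subkernel. The paper's version, in exchange, is shorter and records the explicit symbol $G\alpha\frac{\ol w}{w}$ for $w\ker T_{G\alpha}$.

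Two small points. First, your worry that $G\alpha\frac{\ol w}{w}$ may be unbounded is unfounded: $|\ol w/w|=1$ a.e.\ on \TT, so this symbol lies in \Linf\ with $|G\alpha\frac{\ol w}{w}|=|G|$, and Corollary~\ref{cor:kernels_iguais} applies as stated. Second, the converse part of Corollary~\ref{cor:max_func_subkernel} requires $w\alpha f^M\in\alpha\Hp$, which you should state explicitly. It holds because $wf^M\in w\ker T_{G\alpha}\subset w\ker T_G=\ker T_H\subset\Hp$.
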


\begin{proof}
Since $\ker T_{G \alpha} \subset \ker T_G$, by \eqref{eq:multiplier_between_subkernels} and Proposition \ref{prop:multiplier_between_subkernels} we have that $w \ker T_{G \alpha} = \ker T_{G \alpha \frac{\ol w}{w}}$.
Since $\ker T_H = \ker T_{G \frac{\ol w}{w}}$ it follows from Corollary \ref{cor:kernels_iguais} that $H = G \frac{\ol w}{w} 	\frac{\ol{O_1}}{\ol{O_2}}$ where $O_1,O_2\in\Hp$ are outer, and therefore $\ker T_{H \alpha} = \ker T_{G \alpha
\frac{\ol w}{w} \frac{\ol{O_1}}{\ol{O_2}}} = \ker T_{G \alpha
\frac{\ol w}{w}} = w \ker T_{G \alpha}$.
\end{proof}

Corollary \ref{cor:multiplier_between_subkernels_v2} generalises a result by Crofoot on model spaces (\cite[Corollary 19]{Cro94}).

We also have:

\begin{thm}[{\cite[Theorem 10]{Cro94}}]
\label{thm:Crofoot_isom_multiplier}
Given any model space $K_{\theta_1}$ there exists an isometric multiplier $m$ from $K_{\theta_1}$ onto $K_{\theta_2}$, where $\theta_2$ is an inner function, if and only if
\begin{equation}
\theta_2 = \frac{\theta_1 - \ol p}{1 - p \theta_1} \quad \text{ with } p\in\CC, |p|=1,
\end{equation}
(up to a unimodular constant) and in that case
\begin{equation}
m = m_p^{\theta_1} = \frac{\sqrt{1 - |p|^2}}{1 - p \theta_1}
\end{equation}
(up to a unimodular constant).
\end{thm}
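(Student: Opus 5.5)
The approach is to work with reproducing kernels. Write $M_m$ for multiplication by $m$. The key fact is that $M_m$ is a unitary map from $K_{\theta_1}$ onto $K_{\theta_2}$ if and only if the kernels satisfy
\begin{equation*}
k^{\theta_2}_\lambda(z) = \ol{m(\lambda)}\, m(z)\, k^{\theta_1}_\lambda(z) \quad , \quad z,\lambda\in\DD .
\end{equation*}
I read the condition on $p$ as $|p|<1$; otherwise $\sqrt{1-|p|^2}=0$ and $m\equiv 0$, so the statement as printed looks like a typo for $|p|<1$. Sufficiency and necessity both go through this kernel identity, and the unitary constants are absorbed throughout.

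\emph{Sufficiency.} Let $|p|<1$, and set $m=\sqrt{1-|p|^2}/(1-p\theta_1)$ and $\theta_2=(\theta_1-\ol p)/(1-p\theta_1)$.
\begin{enumerate}[(i)]
\item Since $|p\theta_1|\le|p|<1$, both $m$ and $m^{-1}$ lie in $\Hinf$. Hence $m\in\CCC(K_{\theta_1})$ and $m^{-1}\in\CCC(K_{\theta_2})$ trivially.
\item On $\TT$ one computes $\ol{\theta_2}\,\theta_1=(1-p\theta_1)/(1-\ol p\,\ol{\theta_1})=\ol m/m$.
\item With $g=\ol{\theta_1}$ and $h=\ol{\theta_2}$, step (ii) says $h/g=\ol m/m$. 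Theorem \ref{thm:multiplicadores_onto_kernels}(ii) with $O_1=O_2=1$ then gives $mK_{\theta_1}=K_{\theta_2}$.
\item For isometry, I will verify the classical Frostman identity
\begin{equation*}
1-\ol{\theta_2(\lambda)}\theta_2(z)=\frac{(1-|p|^2)\bigl(1-\ol{\theta_1(\lambda)}\theta_1(z)\bigr)}{\bigl(1-\ol p\,\ol{\theta_1(\lambda)}\bigr)\bigl(1-p\theta_1(z)\bigr)} .
\end{equation*}
Dividing by $1-\ol\lambda z$ gives exactly the kernel identity above.
\item From the kernel identity, $\langle mk^{\theta_1}_\lambda,mk^{\theta_1}_\mu\rangle=\langle k^{\theta_1}_\lambda,k^{\theta_1}_\mu\rangle$: both sides reduce to values of the two kernels, which the identity relates. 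So $M_m$ is isometric on the span of the kernels, which is dense in $K_{\theta_1}$. Since $M_m$ is bounded (as $m\in\Hinf$), it is isometric on all of $K_{\theta_1}$.
\end{enumerate}

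\emph{Necessity.} Suppose $M_m:K_{\theta_1}\to K_{\theta_2}$ is isometric and onto, hence unitary.
\begin{enumerate}[(i)]
\item For $f\in K_{\theta_1}$ we have $\langle f,M_m^*k^{\theta_2}_\lambda\rangle=(mf)(\lambda)=\langle f,\ol{m(\lambda)}k^{\theta_1}_\lambda\rangle$. So $M_m^*k^{\theta_2}_\lambda=\ol{m(\lambda)}k^{\theta_1}_\lambda$.
\item Since $M_mM_m^*=I$, applying $M_m$ gives the kernel identity. Multiplying by $1-\ol\lambda z$ yields the functional equation
\begin{equation*}
1-\ol{\theta_2(\lambda)}\theta_2(z)=\ol{m(\lambda)}\,m(z)\bigl(1-\ol{\theta_1(\lambda)}\theta_1(z)\bigr).
\end{equation*}
\item Put $\lambda=z=\lambda_0$: the left side is $1-|\theta_2(\lambda_0)|^2>0$, so $m(\lambda_0)\neq 0$ for every $\lambda_0$. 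Fix $\lambda_0$; step (ii) then expresses $m(z)$ as an explicit linear expression in $\theta_1(z)$ and $\theta_2(z)$.
\item Substitute this $m(z)$ back into the functional equation for a second point, or compare conjugate-symmetric versions. This should force a relation $a+b\theta_2=(c+d\theta_1)\,\cdot(\text{same for the other variable})$, i.e. a bilinear relation between $\theta_1$ and $\theta_2$. Solving it gives $\theta_2=\tau\circ\theta_1$ for some Möbius map $\tau$.
\end{enumerate}

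\emph{Pinning down $\tau$ and $m$.} Since $\theta_1,\theta_2$ are both inner, $\tau$ must preserve $\TT$. If $\theta_1$ is non-constant, $\tau$ must in fact be a disc automorphism, so up to a unimodular constant $\tau(w)=(w-\ol p)/(1-pw)$ with $|p|<1$. Feeding this back into step (ii) and comparing with the Frostman identity from the sufficiency part gives $\ol{m(\lambda)}m(z)=\ol{m_p(\lambda)}m_p(z)$, where $m_p$ is the multiplier in the statement. Hence $m=c\,m_p$ with $|c|=1$.

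The main obstacle is the algebra in steps (iii)–(iv) of necessity: extracting the Möbius relation cleanly from the rank-one structure of the kernel identity. To handle it I would normalise at a point where $\theta_1(\lambda_0)=0$, or else first compose $\theta_1$ with a Frostman shift so that it vanishes at $0$, and then take $\lambda=0$ in the functional equation.
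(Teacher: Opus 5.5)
\paragraph{Verdict.} The paper gives no proof of this statement; it quotes it from Crofoot's paper. You are right that the condition must read $|p|<1$, which is how the paper uses it later. So your reproducing-kernel argument has to stand on its own.

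\paragraph{What already works.} The sufficiency half is complete and correct: $\ol{\theta_2}\theta_1=\ol m/m$ on $\TT$, the Frostman identity, the kernel identity and the density argument all hold. Necessity steps (i)--(iii) are also right, with two points to make explicit:
\begin{enumerate}[(a)]
\item You evaluate $m$ at points of $\DD$. This is legitimate because $k^{\theta_1}_\lambda\in\GG\Hinf$, so $m=(mk^{\theta_1}_\lambda)/k^{\theta_1}_\lambda\in\Hp$.
\item You need $\theta_1$ non-constant; otherwise $K_{\theta_1}=\zero$ and $m$ is not unique. This in turn forces $\theta_2$ to be non-constant, which you use in (iii).
\end{enumerate}

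\paragraph{The gap.} Necessity step (iv) is not yet an argument. The sentence ``this should force a relation $a+b\theta_2=(c+d\theta_1)\cdot(\dots)$'' does not derive anything. The claims in the next paragraph would each need proof: that $\tau$ is a non-degenerate M\"obius map, that it preserves $\TT$, and that it is therefore a disc automorphism. The Frostman-shift normalisation you mention at the end is the right idea; note that a zero of $\theta_1$ need not exist. It can be carried out at an arbitrary point using the identity you already proved.

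\paragraph{How to close it.} Fix $\lambda_0\in\DD$ and put $a_i=\theta_i(\lambda_0)$, $\phi_i=(\theta_i-a_i)/(1-\ol{a_i}\theta_i)$. Take $\lambda=\lambda_0$ in your functional equation, and then $z=\lambda=\lambda_0$. This gives
\begin{equation*}
m(z)=\frac{1-\ol{a_2}\,\theta_2(z)}{\ol{m(\lambda_0)}\,\bigl(1-\ol{a_1}\,\theta_1(z)\bigr)} , \qquad |m(\lambda_0)|^2=\frac{1-|a_2|^2}{1-|a_1|^2} .
\end{equation*}
Substitute this, together with its conjugate at $\lambda$, back into the functional equation. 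Then multiply by $(1-|a_2|^2)/\bigl[(1-a_2\ol{\theta_2(\lambda)})(1-\ol{a_2}\theta_2(z))\bigr]$. The Frostman identity applied to $\theta_2$ on the left and to $\theta_1$ on the right turns the equation into
\begin{equation*}
\ol{\phi_2(\lambda)}\,\phi_2(z)=\ol{\phi_1(\lambda)}\,\phi_1(z) , \qquad z,\lambda\in\DD .
\end{equation*}
Taking $z=\lambda$ gives $|\phi_2|=|\phi_1|$. Fixing $\lambda$ with $\phi_1(\lambda)\neq 0$ then gives $\phi_2=\kappa\phi_1$ with $|\kappa|=1$.

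Hence $\theta_2=\tau\circ\theta_1$, where $\tau$ is a composition of disc automorphisms. So $\tau$ has the form $\tau(w)=\kappa'(w-\ol p)/(1-pw)$ with $|p|<1$, and no separate argument about $\TT$ is needed. Your last step then works as written. Compare the functional equation with the Frostman identity for $\theta_2=\kappa'(\theta_1)_p$, and divide by $1-\ol{\theta_1(\lambda)}\theta_1(z)$, which is nonzero on $\DD\times\DD$. This gives $\ol{m(\lambda)}m(z)=\ol{m_p(\lambda)}m_p(z)$, hence $m=c\,m^{\theta_1}_p$ with $|c|=1$.
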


In particular, we have that Hayashi's representation of a model space $K_\theta$ is
\begin{equation}
\label{eq:Hayashi_repres_modelspace}
K_\theta = \frac{1 - \ol{ \theta (0) } \theta}{\sqrt{1 - |\theta(0)|^2}} K_{\theta_H} = \frac{k_0^\theta}{\sqrt{1 - |\theta(0)|^2}} = K_{z \frac{\tilde k^\theta_0}{k^\theta_0}}
\end{equation}
with 
\begin{equation}
\theta_H = \frac{\theta - \theta(0)}{1 - \ol{ \theta (0) } \theta} = z \frac{\tilde k^\theta_0}{k^\theta_0}\ .
\end{equation}

The question of obtaining a representation of the form \eqref{eq:model_space_representation} for a Toeplitz kernel is closely related with that of determining maximal functions in $\ker T_G$. Indeed, by Theorem \ref{thm:multiplicadores_onto_kernels}, there is a strong connection between multipliers from one Toeplitz kernel onto another and maximal functions in those kernels.

Multipliers from a model space onto a Toeplitz kernel are often determined by, or equal to, the outer factor of a maximal function in that kernel.
For instance, if $w$ is a multiplier from a model space $K_\theta$ onto \kertg, i.e., $\kertg = w K_\theta$, and $\theta(0) =0$ (as in \eqref{eq:Hayashi_representation}), then, since $\theta \ol z$ is a maximal function in $K_\theta$, by Theorem \ref{thm:multiplicadores_onto_kernels} we have that $f^M = w \cdot (\theta \ol z)$ is a maximal function in \kertg, i.e., the multiplier $w$ is the outer factor of $f^M$.
The following result gives a necessary and sufficient condition for the outer factor of a maximal function in a Toeplitz kernel to be a multiplier from a model space onto that kernel.

\begin{prop}[{\cite{CCD25}}]
\label{prop:multiplier_from_maximal_function}
Let $f^M$ be a maximal function in \kertg\ and let $f^M = IO$ be an inner-outer factorisation.
Then $\kertg = O K_\theta$, with $\theta$ inner, if and only if $K_\theta = K_{zI}$ and
\begin{equation}
\label{eq:Carleson_cond_max_f}
O \in \CCC (K_\theta) \quad , \quad O^{-1} \in \CCC (\kertg).
\end{equation}
\end{prop}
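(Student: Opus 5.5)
The plan is to read both directions off Theorem \ref{thm:multiplicadores_onto_kernels}, using that $f^M=IO$ determines $\kertg$ through \eqref{eq:unimod_symbol_from_max_func}, together with the simple maximal function $I$ of $K_{zI}$.

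\emph{Sufficiency.} Assume $K_\theta=K_{zI}$ and the Carleson conditions \eqref{eq:Carleson_cond_max_f}. First, $O$ is outer in $\Hp$, so $O^{\pm1}\in\NN$. Next, $I\in K_{zI}$ is a maximal function of $K_{zI}=\ker T_{\ol z\ol I}$. Indeed, $\ol z\ol I\cdot I=\ol z\cdot\ol 1$ with $1$ outer, which is \eqref{eq:max_func_characterization} in Proposition \ref{prop:max_func_characterization}. The product $O\cdot I=f^M$ is a maximal function of $\kertg$ by hypothesis. We are given $O\in\CCC(K_{zI})$ and $O^{-1}\in\CCC(\kertg)$. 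Condition (iii) of Theorem \ref{thm:multiplicadores_onto_kernels}, applied with $g$ replaced by $\ol z\ol I$ and $h$ by $g$, therefore gives $O K_{zI}=\kertg$, that is, $\kertg=OK_\theta$.

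\emph{Necessity.} Assume $\kertg=OK_\theta$. Theorem \ref{thm:multiplicadores_onto_kernels} then gives $O\in\CCC(K_\theta)$ and $O^{-1}\in\CCC(\kertg)$ directly. It remains to show $K_\theta=K_{zI}$. I would apply the onto-multiplier theorem to the reverse multiplier, $O^{-1}\kertg=K_\theta$. Condition (iii) is independent of which maximal function is chosen ("some, hence every"), so $O^{-1}f^M=I$ is a maximal function in $K_\theta$. As noted above, $I$ is also a maximal function of $K_{zI}$. A maximal function uniquely determines its minimal kernel, so $K_\theta=\KK_{min}(I)=K_{zI}$. Alternatively, \eqref{eq:unimod_symbol_from_max_func} applied to $I$ (inner factor $I$, outer factor $1$) gives $K_\theta=\ker T_{\ol z\ol I}=K_{zI}$.

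The main obstacle is purely bookkeeping: applying Theorem \ref{thm:multiplicadores_onto_kernels} with the roles of $w$ and $w^{-1}$ reversed, and checking that its hypotheses $w^{\pm1}\in\NN$ hold. The latter is immediate because $O$ is outer. One also has to record that $K_\theta=K_{zI}$ determines $\theta$ up to a constant, namely $\theta=czI$ with $|c|=1$, by Corollary \ref{cor:kernels_iguais}; this justifies the statement's phrasing "$K_\theta=K_{zI}$".
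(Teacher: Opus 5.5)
Your proof is correct and takes the same approach as the paper's. For sufficiency you apply Theorem \ref{thm:multiplicadores_onto_kernels}(iii), using that $I$ is maximal in $K_{zI}$ and $OI=f^M$ is maximal in $\kertg$; for necessity you observe that $O^{-1}f^M=I$ must be maximal in $K_\theta$, which forces $K_\theta=K_{zI}$. Your extra checks ($O^{\pm1}\in\NN$, and the verification via Proposition \ref{prop:max_func_characterization} that $I$ is maximal in $K_{zI}$) only spell out details the paper leaves implicit.
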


\begin{proof}
If $\theta = zI$ and \eqref{eq:Carleson_cond_max_f} holds, then, since $I$ is a maximal function in $K_{zI}$ and $OI$ is a maximal function in \kertg, by Theorem \ref{thm:multiplicadores_onto_kernels} we have that $\kertg = O K_{zI}$.
Conversely, if $\kertg = O K_\theta$, then \eqref{eq:Carleson_cond_max_f} holds and $I$ must be a maximal function in $K_\theta$, so $K_\theta = K_{zI}$.
\end{proof}

From Proposition \ref{prop:multiplier_from_maximal_function} and Corollary \ref{cor:multiplier_between_subkernels_v2} we have the following.

\begin{cor}[{\cite[Corollary 3.17]{CCD25}}]
Let $f^M = IO$ be an inner-outer factorisation of a maximal function in \kertg, with $O\in\GG\Hinf$.
Then $\kertg = OK_{zI}$.
\end{cor}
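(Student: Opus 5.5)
The plan is to apply Proposition \ref{prop:multiplier_from_maximal_function} with $\theta = zI$. Then $\kertg = OK_{zI}$ follows once the two Carleson conditions \eqref{eq:Carleson_cond_max_f} hold, namely $O\in\CCC(K_{zI})$ and $O^{-1}\in\CCC(\kertg)$. Both should come almost for free from $O\in\GG\Hinf$.

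First, $O\in\Hinf$ gives $O K_{zI}\subset \Hinf\cdot\Hp\subset\Hp\subset L^2$, so $O\in\CCC(K_{zI})$. Second, $O\in\GG\Hinf$ means $O^{-1}\in\Hinf$, so $O^{-1}\kertg\subset\Hp\subset L^2$, and hence $O^{-1}\in\CCC(\kertg)$. Since $O$ and $O^{-1}$ lie in $\Hinf\subset\NN$, the standing hypothesis $w^{\pm1}\in\NN$ of Theorem \ref{thm:multiplicadores_onto_kernels}, which underlies Proposition \ref{prop:multiplier_from_maximal_function}, is also satisfied.

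With both conditions checked, the ``if'' direction of Proposition \ref{prop:multiplier_from_maximal_function} gives $\kertg = OK_{zI}$ directly. For that direction to apply, $I$ must be a maximal function in $K_{zI}$. This follows from \eqref{eq:unimod_symbol_from_max_func} together with Corollary \ref{cor:kernels_iguais}. Alternatively, it follows from Proposition \ref{prop:max_func_characterization}, since $\ol{zI}\,I=\ol z\,\ol 1$ and $1$ is outer. The proof of the proposition already uses this fact.

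As a cross-check, the result should also be reachable through Corollary \ref{cor:multiplier_between_subkernels_v2}, as the text suggests. By \eqref{eq:unimod_symbol_from_max_func}, $\kertg=\ker T_{\ol{zI}\,\ol O/O}$. Multiplication by $O\in\GG\Hinf$ maps $\ker T_{\ol z}=\zero$ trivially, so I would not use that route except as motivation. I do not expect any real obstacle here. The only point needing care is to confirm that boundedness of $O^{\pm1}$ is exactly what the Carleson conditions require.
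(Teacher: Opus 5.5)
Your proof is correct and matches the paper's approach. The paper reads the corollary off the ``if'' direction of Proposition~\ref{prop:multiplier_from_maximal_function} with $\theta = zI$, and the Carleson conditions \eqref{eq:Carleson_cond_max_f} hold immediately because $O^{\pm 1}\in\Hinf$.

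There is one slip in your discarded aside: $\ker T_{\ol z} = K_z$ is the space of constants, not $\zero$. In fact $O K_z = \ker T_{\ol z \ol O/O}$. The actual reason Corollary~\ref{cor:multiplier_between_subkernels_v2} does not give the result directly is that $K_{zI}$ contains $K_z$: it is a larger kernel, not a subkernel of $T_{\ol z}$.
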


\begin{cor}
\label{cor:multiplier_from_maximal_function_hz}
Let $f^M = IO$ be an inner-outer factorisation of a maximal function in $\ker T_G$, where $O$ satisfies \eqref{eq:Carleson_cond_max_f}.
Then 
\begin{equation}
\label{eq:multiplier_from_max_funct_hz}
\ker T_{Gz} = O K_I .
\end{equation}
\end{cor}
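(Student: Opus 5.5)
The plan is to derive the corollary from Proposition \ref{prop:multiplier_from_maximal_function} and Corollary \ref{cor:multiplier_between_subkernels_v2}, applied with $\alpha = z$.

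First, apply Proposition \ref{prop:multiplier_from_maximal_function} to the maximal function $f^M = IO$ in $\ker T_G$. Since $O$ satisfies \eqref{eq:Carleson_cond_max_f} (with $\theta = zI$ and $\kertg$ replaced by $\ker T_G$), we obtain
\begin{equation*}
\ker T_G = O K_{zI}.
\end{equation*}
We recall that $K_{zI} = \ker T_{\ol z \ol I}$.

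Next, apply Corollary \ref{cor:multiplier_between_subkernels_v2} with $w = O$, the kernel $\ker T_{\ol z\ol I}$ in place of $\ker T_G$, the kernel $\ker T_G$ in place of $\ker T_H$, and the inner function $\alpha = z$. This gives
\begin{equation*}
O \ker T_{\ol z \ol I z} = \ker T_{Gz}.
\end{equation*}
Since $\ker T_{\ol z \ol I z} = \ker T_{\ol I} = K_I$, this is exactly \eqref{eq:multiplier_from_max_funct_hz}.

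The main point to check is the case where $I$ is constant. Then $K_I = \zero$, and we need $\ker T_{Gz} = \zero$. This follows from the same identity, because Corollary \ref{cor:multiplier_between_subkernels_v2} holds for any inner $\alpha$: its proof uses only the symbol identity $H = G\frac{\ol w}{w}\frac{\ol{O_1}}{\ol{O_2}}$ and Corollary \ref{cor:kernels_iguais}. That argument does not require the subkernel to be nontrivial, although Proposition \ref{prop:multiplier_between_subkernels} assumes $\kertg \neq \zero$. If that assumption were really needed, the trivial case can be handled directly instead. We have $\ker T_{Gz} = \ker T_{\ol z\ol I z\,\ol O/O} = \ker T_{\ol I\,\ol O/O} = O\,\ker T_{\ol I}$ (via Theorem \ref{thm:multiplicadores_onto_kernels}), and this is $\zero$ when $I$ is constant.
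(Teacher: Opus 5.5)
Your argument is the paper's own: Proposition \ref{prop:multiplier_from_maximal_function} gives $\ker T_G = OK_{zI} = O\ker T_{\ol z\ol I}$, and Corollary \ref{cor:multiplier_between_subkernels_v2} with $w=O$ and $\alpha=z$ then yields $\ker T_{Gz} = OK_I$. The paper does not treat constant $I$ separately, and your claim that the proof of Corollary \ref{cor:multiplier_between_subkernels_v2} avoids Proposition \ref{prop:multiplier_between_subkernels} is inaccurate, since that proof invokes it; the cleanest way to settle this case is to note that $\ker T_G = OK_z = \CC O$ with $O(0)\neq 0$, so $\ker T_{Gz} = \ol z(\ker T_G\cap z\Hp) = \zero$.
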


Using the relation
\begin{equation}
z= B_\lambda \frac{1-\ol\lambda z}{1-\lambda\ol z} \quad , \quad \lambda \in\DD,
\end{equation}
we also have:

\begin{cor}
\label{cor:multiplier_from_maximal_function_modified}
With the same assumptions as in Corollary \ref{cor:multiplier_from_maximal_function_hz}, we have that, for any $\lambda\in\DD$,
\begin{equation}
\label{eq:multiplier_from_maximal_function_modified}
\ker T_G = (1-\ol\lambda z) O K_{B_\lambda I}\ ,
\end{equation}
and
\begin{equation}
\label{eq:multiplier_from_maximal_function_modified_hB}
\ker T_{GB_\lambda} = O (1-\ol\lambda z) K_I\ .
\end{equation}
\end{cor}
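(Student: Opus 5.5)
From the relation $z = B_\lambda \frac{1-\ol\lambda z}{1-\lambda \ol z}$, I will obtain both equalities in Corollary \ref{cor:multiplier_from_maximal_function_modified} by combining Corollary \ref{cor:multiplier_from_maximal_function_hz} with the results on onto multipliers and subkernels.

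I start with \eqref{eq:multiplier_from_maximal_function_modified_hB}. The function $w_\lambda := \frac{1-\ol\lambda z}{1-\lambda\ol z} = \frac{\ol{(1-\lambda \ol z)}}{1-\lambda\ol z}\cdot\frac{1-\ol\lambda z}{\ol{(1-\ol\lambda z)}}$ is unimodular. Writing $u = 1-\ol\lambda z\in\GG\Hinf$ (outer), we have $w_\lambda = u/\ol u$. Hence
\[
Gz = G B_\lambda \frac{u}{\ol u}, \qquad G B_\lambda = G z\,\frac{\ol u}{u}.
\]
By Theorem \ref{thm:multiplicadores_onto_kernels}, or more directly \eqref{eq:simbolo_vindo_de_multiplicador} together with the fact that multiplication by $u\in\GG\Hinf$ is an isomorphism of Toeplitz kernels, I get $\ker T_{Gz\frac{\ol u}{u}} = u \ker T_{Gz}$. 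Concretely, $f\in\ker T_{Gz}$ iff $Gz f\in\ol{z\Hp}$, iff $Gz\frac{\ol u}{u}(uf) = \ol u\, Gzf \in \ol{z\Hp}$, using $\ol u^{\pm1}\in\ol\Hinf$. So $\ker T_{GB_\lambda} = u\ker T_{Gz}$. Corollary \ref{cor:multiplier_from_maximal_function_hz} gives $\ker T_{Gz} = OK_I$, and therefore $\ker T_{GB_\lambda} = (1-\ol\lambda z) O K_I$, which is \eqref{eq:multiplier_from_maximal_function_modified_hB}.

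For \eqref{eq:multiplier_from_maximal_function_modified}, I use Corollary \ref{cor:multiplier_between_subkernels_v2} in the form of an \emph{onto} multiplier. By Proposition \ref{prop:multiplier_from_maximal_function}, $\ker T_G = OK_{zI}$, so $O$ multiplies $K_{zI} = \ker T_{\ol z\ol I}$ onto $\ker T_G$. The model space $K_{B_\lambda I}$ is another kernel $\ker T_{\ol{B_\lambda}\ol I} = \ker T_{\ol z \ol I \frac{u}{\ol u}}$. By the same invertible-outer argument, $\ker T_{\ol z\ol I\frac{u}{\ol u}} = u^{-1}\ker T_{\ol z\ol I}$, that is, $K_{zI} = uK_{B_\lambda I}$. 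This is just the Crofoot-type identity for the change of variable by $B_\lambda$, and it can be checked directly since $\ol u^{\pm1}\in\ol\Hinf$. Then
\[
\ker T_G = OK_{zI} = O(1-\ol\lambda z)K_{B_\lambda I}.
\]

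The only delicate point is the justification that multiplying by $u^{\pm1}$ with $u\in\GG\Hinf$ transports kernels exactly, i.e.\ $\ker T_{g\ol u/u} = u\ker T_g$. This is immediate from the definitions: $\Hp$ and $\ol{z\Hp}=\Hm$ are both invariant under multiplication by $\GG\Hinf$ and $\GG\ol\Hinf$ respectively. It is also a special case of Theorem \ref{thm:multiplicadores_onto_kernels}(ii), since the Carleson conditions are trivial for bounded $u^{\pm1}$. With that in hand, the rest is bookkeeping of the symbols, and I expect no real obstacle. As a consistency check, \eqref{eq:multiplier_from_maximal_function_modified_hB} also follows from \eqref{eq:multiplier_from_maximal_function_modified} by Corollary \ref{cor:multiplier_between_subkernels_v2} with $\alpha=B_\lambda$, since $K_{B_\lambda I}$ is $\ker T_{\ol{B_\lambda}\ol I}$ and $\ker T_{\ol{B_\lambda}\ol I B_\lambda}=K_I$.
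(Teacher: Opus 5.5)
Your proof is correct and is essentially the paper's own (implicit) argument: rewrite the symbols using $z = B_\lambda \frac{1-\ol\lambda z}{1-\lambda \ol z}$, transport kernels by the invertible outer factor $1-\ol\lambda z$, and start from $\ker T_G = OK_{zI}$ and $\ker T_{Gz} = OK_I$. The only blemish is cosmetic: your first displayed factorisation of $w_\lambda$ actually equals $w_\lambda^2$, but you then use the correct identity $w_\lambda = u/\ol u$, so nothing downstream is affected.
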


Comparing \eqref{eq:multiplier_from_maximal_function_modified} and \eqref{eq:multiplier_from_maximal_function_modified_hB}, one may ask when does a multiplier which maps a Toeplitz kernel onto a model space, also map a subkernel of the same Toeplitz operator onto a model space.
By Corollary \ref{cor:multiplier_between_subkernels_v2}, if $\ker T_G = w K_\theta$, with $\theta$ inner, then
\begin{equation}
\label{eq:multiplier_Galpha_to_theta_alpha}
\ker T_{G \alpha} = w \ker T_{\ol\theta\alpha}
\end{equation}
for $\alpha$ inner;
$\ker T_{\ol\theta\alpha}$ is a model space if and only if $\alpha \preceq \theta$.

From \eqref{eq:multiplier_Galpha_to_theta_alpha} we see that, if $\ker T_G = w K_\theta$ and $\alpha$ is an inner function that does not divide $\theta$, then the question of obtaining a model space representation for a subkernel $\ker T_{G\alpha}$ reduces to obtaining a model space representation for $\ker T_{\ol\theta\alpha}$.
This is closely related with determining maximal functions in $K_\theta$ with an inner factor that can be divided by $\alpha$, discussed in Section \ref{sec:maximal_functions_subkernels}.


\section{Toeplitz kernels contained in model spaces}
\label{sec:Toeplitz_kernels_contained_in_model_spaces}

By Proposition \ref{prop:subkernel_modelspace}, Toeplitz kernels contained in some model space $K_\theta$ take the form $\ker T_{\ol\theta \alpha}$, for some inner function $\alpha$.
We focus here, as in Section \ref{sec:maximal_functions_subkernels}, on the case where $\alpha$ is a finite Blaschke product, i.e., on model space representations of Toeplitz kernels of the form $\ker T_{\ol\theta B}$ where $B$ is a finite Blashcke product.

\begin{thm}
\label{thm:model_space_repres_ker_theta_Blambda}
Let $\lambda\in\DD$ and let $\theta$ be an inner function with $\dim K_\theta >1$.
Then, for any $\mu\in\DD$,
\begin{enumerate}[(i)]
\item $\ker T_{\ol\theta B_\lambda} = (1-\ol\lambda z) k_\mu^\theta \, K_{\theta_\mu}$, with $\theta_\mu = \frac{\tilde k_\mu^\theta}{k_\mu^\theta}$;

\item $\ker T_{\ol\theta B_\lambda} = m_{\lambda,\theta}\, K_{\theta_\lambda}$, with $\theta_\lambda = \frac{\tilde k_\lambda^\theta}{k_\lambda^\theta}$, where
\begin{equation}
m_{\lambda,\theta} = \frac{(1-\ol\lambda z) k_\lambda^\theta}{\sqrt{1-|\theta(\lambda)|^2}}
\end{equation}
is an isometric multiplier from $K_{\theta_\lambda}$ onto $\ker T_{\ol\theta B_\lambda}$;

\item Hayashi's representation of $\ker T_{\ol\theta B_\lambda}$ is given by
\begin{equation}
\ker T_{\ol\theta B_\lambda} = m_{\lambda,\theta} \frac{k_0^{\theta_\lambda}}{\sqrt{1 - |\theta_\lambda(0)|^2}}\ K_{z (\theta_\lambda)_{{}_0}}\ .
\end{equation}
\end{enumerate}
\end{thm}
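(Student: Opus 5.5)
The plan is to read all three statements off the results of Section \ref{sec:multipliers_subkernels}, starting from the maximal function $\tilde k^\theta_\mu$ in $K_\theta = \ker T_{\ol\theta}$. By \eqref{eq:ktil_IO_fact} its inner--outer factorisation is $\tilde k^\theta_\mu = \theta_\mu k^\theta_\mu$, with $\theta_\mu = \tilde k^\theta_\mu / k^\theta_\mu$ inner and $k^\theta_\mu \in \GG\Hinf$.

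\emph{Step (i).} Since $k^\theta_\mu \in \GG\Hinf$, both $k^\theta_\mu$ and $(k^\theta_\mu)^{-1}$ are bounded. Hence the Carleson conditions \eqref{eq:Carleson_cond_max_f} hold trivially with $O = k^\theta_\mu$. Apply Corollary \ref{cor:multiplier_from_maximal_function_modified}, formula \eqref{eq:multiplier_from_maximal_function_modified_hB}, with $G = \ol\theta$, $I = \theta_\mu$ and $O = k^\theta_\mu$. This gives directly
\[
\ker T_{\ol\theta B_\lambda} = (1-\ol\lambda z)\, k^\theta_\mu\, K_{\theta_\mu}.
\]
The hypothesis $\dim K_\theta > 1$ only ensures that the kernel is nontrivial, i.e.\ that $\theta_\mu$ is non-constant.

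\emph{Step (ii).} Take $\mu = \lambda$ in (i); it remains to prove that the multiplier is isometric up to the constant. The key identity is
\[
\theta - \theta(\lambda) = (z-\lambda)\tilde k^\theta_\lambda = B_\lambda \theta_\lambda (1-\ol\lambda z) k^\theta_\lambda ,
\]
together with $(1-\ol\lambda z) k^\theta_\lambda = 1 - \ol{\theta(\lambda)}\theta$. From these,
\[
B_\lambda \theta_\lambda = \frac{\theta - \theta(\lambda)}{1-\ol{\theta(\lambda)}\theta},
\]
which is a Frostman shift of $\theta$. It follows that
\[
m_{\lambda,\theta} = \frac{1-\ol{\theta(\lambda)}\theta}{\sqrt{1-|\theta(\lambda)|^2}}.
\]
This is, up to a unimodular constant, the inverse of Crofoot's multiplier $m^{\theta}_p$ with $p = \ol{\theta(\lambda)}$, $|p|<1$. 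I expect the intended condition in Theorem \ref{thm:Crofoot_isom_multiplier} is $|p|<1$; if only the stated form is available, I would instead check the isometry directly via the standard reproducing-kernel computation for the Crofoot transform.

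So $m^\theta_p$ maps $K_\theta$ isometrically onto $K_{B_\lambda\theta_\lambda}$. Its inverse $m_{\lambda,\theta}$ therefore maps $K_{B_\lambda\theta_\lambda}$ isometrically onto $K_\theta$. Restricting to the subspace $K_{\theta_\lambda} \subset K_{B_\lambda\theta_\lambda}$ preserves the isometry, and by (i) the image is $\ker T_{\ol\theta B_\lambda}$.

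\emph{Step (iii).} Compose (ii) with the Hayashi representation \eqref{eq:Hayashi_repres_modelspace} of $K_{\theta_\lambda}$:
\[
K_{\theta_\lambda} = \frac{k_0^{\theta_\lambda}}{\sqrt{1-|\theta_\lambda(0)|^2}}\, K_{z(\theta_\lambda)_0}, \qquad (\theta_\lambda)_0 = \tilde k_0^{\theta_\lambda}/k_0^{\theta_\lambda}.
\]
The composite multiplier is a product of two isometric multipliers, hence isometric. Both factors lie in $\GG\Hinf$, so their product is outer, and the model space has the form $K_{z\gamma}$. By uniqueness of Hayashi's representation up to a unimodular constant, this is Hayashi's representation of $\ker T_{\ol\theta B_\lambda}$.

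The main obstacle is the isometry in (ii): it requires matching the Frostman-shift identity to Crofoot's theorem and handling the direction of the multiplier correctly.
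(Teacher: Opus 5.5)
Your proposal is correct and follows the paper's proof. In (ii) you restrict the Crofoot isometry $m_{\lambda,\theta}\colon K_{B_\lambda\theta_\lambda}\to K_\theta$ to $K_{\theta_\lambda}$, and in (iii) you compose with the Hayashi representation of $K_{\theta_\lambda}$, as the paper does; you also make explicit the Frostman-shift identity $B_\lambda\theta_\lambda=(\theta-\theta(\lambda))/(1-\ol{\theta(\lambda)}\theta)$ and rightly read Crofoot's condition as $|p|<1$. The only variation is in (i): the paper factorises the symbol directly as $\ol\theta B_\lambda=\ol{\theta_\mu}\,\ol u/u$ with $u=(1-\ol\lambda z)k^\theta_\mu\in\GG\Hinf$ via \eqref{eq:mod_max_f_fact_alpha_lambda_mu}, whereas you reach the same identity through Corollary \ref{cor:multiplier_from_maximal_function_modified}, which is valid but routes a one-line computation through the heavier multiplier machinery.
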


\begin{proof}
(i) From \eqref{eq:mod_max_f_fact_alpha_lambda_mu}, for any $\lambda,\mu\in\DD$ we have that
\begin{equation}
\ol\theta B_\lambda = \ol{\left( \frac{\tilde k_\mu^\theta}{k_\mu^\theta}    \right)} \frac{\ol{k_\mu^\theta} (1-\lambda\ol z)}{k_\mu^\theta (1 -\ol\lambda z)}
\end{equation}
where $k_\mu^\theta (1 -\ol\lambda z) \in \GG\Hinf$, so
\begin{equation}
\ker T_{\ol\theta B_\lambda} = k_\mu^\theta (1 -\ol\lambda z) K_{\theta_\mu}\ .
\end{equation}

(ii) For $\mu = \lambda$ we have that $\ker T_{\ol\theta B_\lambda} = k_\lambda^\theta (1 -\ol\lambda z) K_{\theta_\lambda}$.
Since, by Theorem \ref{thm:Crofoot_isom_multiplier}, $m_{\lambda,\theta}$ is an isometric multiplier from $K_{B_\lambda \theta_\lambda}$ onto $K_\theta$ and $K_{B_\lambda \theta_\lambda} \supset K_{\theta_\lambda}$, $K_\theta \supset \ker T_{\ol\theta B_\lambda}$, we have that $m_{\lambda,\theta}$ is an isometric multiplier from $K_{\theta_\lambda}$ onto $\ker T_{\ol\theta B_\lambda}$.

(iii) follows from \eqref{eq:Hayashi_repres_modelspace}.
\end{proof}

If $\dim K_\theta >2$, then $\dim K_{\theta_\mu}>1$ and we can apply Theorem \ref{thm:model_space_repres_ker_theta_Blambda} to $\ker T_{\ol{\theta_\mu} B_{\lambda_2}}$ for any $\lambda_2\in\DD$.
We thus get that, for any $\lambda_1,\lambda_2,\mu_1,\mu_2\in\DD$,
\begin{align}
\ker T_{\ol\theta B_{\lambda_1} B_{\lambda_2}} & = 
(1-\ol{\lambda_1} z) k_{\mu_1}^\theta \cdot \ker T_{\ol\theta_{\mu_1} B_{\lambda_2}} \nonumber\\
	& = (1-\ol{\lambda_1} z) (1-\ol{\lambda_2} z) k_{\mu_1}^\theta k_{\mu_2}^{\theta_{\mu_1}} \cdot K_{(\theta_{\mu_1})_{\mu_2}}
\end{align}
where
\begin{equation}
(\theta_{\mu_1})_{\mu_2} = \frac{\tilde k_{\mu_2}^{\theta_{\mu_1}}}{k_{\mu_2}^{\theta_{\mu_1}}}\ .
\end{equation}

By repeating this reasoning and choosing $\mu_j = \lambda_j$ , $j=1,2,\dots,N$, we have the following.

\begin{thm}
\label{thm:model_space_repres_ker_theta_B}
Let $\lambda_1, \dots, \lambda_N \in\DD$, $N\geq 1$ and let $\theta$ be an inner function with $\dim K_\theta > N$.
Let moreover $B = B_{\lambda_1} B_{\lambda_2} \cdots B_{\lambda_N}$ be a finite Blaschke product and
\begin{align}
I_0 &= \theta \qquad \ , \qquad I_n = \frac{\tilde k_{\lambda_n}^{I_{n-1}}}{k_{\lambda_n}^{I_{n-1}}}\ , \\
O_n &= (1-\ol\lambda_n z) k_{\lambda_n}^{I_{n-1}} = 1-\ol{I_{n-1}(\lambda_n)} I_{n-1}\ ,
\end{align}
for $1\leq n \leq N$.

Then:
\begin{enumerate}[(i)]
\item $\ker T_{\ol\theta B} = O_1 \cdots O_N K_{I_N}$;

\item $\ker T_{\ol\theta B} = m K_{I_N}$
where
\begin{equation}
m = \prod_{i=1}^N \frac{O_i}{\sqrt{1 - |I_{i-1}(\lambda_i)|^2}}
\end{equation}
is an isometric multiplier from $K_{I_N}$ onto $\ker T_{\ol\theta B}$;

\item
\begin{equation}
\ker T_{\ol\theta B} = m \frac{k_0^{I_N}}{\sqrt{1-|I_N(0)|^2}} K_{z {(I_N)}_0}
\end{equation}
is Hayashi's representation of $\ker T_{\ol\theta B}$.
\end{enumerate}
\end{thm}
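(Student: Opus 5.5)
The plan is to argue by induction on $N$, using Theorem \ref{thm:model_space_repres_ker_theta_Blambda} as the base case and as the inductive step. First I will check that the two expressions given for $O_n$ agree. Since $k^{I}_{\lambda}=\frac{1-\ol{I(\lambda)}I}{1-\ol\lambda z}$, we get $(1-\ol\lambda_n z)k^{I_{n-1}}_{\lambda_n}=1-\ol{I_{n-1}(\lambda_n)}I_{n-1}$. This function lies in $\GG\Hinf$, because $|I_{n-1}(\lambda_n)|<1$ whenever $I_{n-1}$ is non-constant.

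The dimension bookkeeping comes from Lemma \ref{lem:dim_K_In}, or more precisely from its proof: if $\dim K_{I_{n-1}}\ge M$, then $\dim K_{I_n}\ge M-1$. Starting from $\dim K_\theta>N$, this gives $\dim K_{I_{n-1}}>N-n+1\ge 1$ for $n\le N$. In particular $\dim K_{I_{n-1}}>1$ at every step, which is the hypothesis Theorem \ref{thm:model_space_repres_ker_theta_Blambda} requires when applied with $I_{n-1}$ in place of $\theta$.

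For (i), write $B=B_{\lambda_1}B'$ with $B'=B_{\lambda_2}\cdots B_{\lambda_N}$. Theorem \ref{thm:model_space_repres_ker_theta_Blambda}(i) gives $\ker T_{\ol\theta B_{\lambda_1}}=O_1K_{I_1}$, which is a multiplier onto. Corollary \ref{cor:multiplier_between_subkernels_v2}, applied with the inner function $\alpha=B'$, then gives
\[
\ker T_{\ol\theta B}=O_1\ker T_{\ol{I_1}B'} .
\]
Applying the inductive hypothesis to $I_1$, which satisfies $\dim K_{I_1}>N-1$, with the $N-1$ points $\lambda_2,\dots,\lambda_N$, yields $\ker T_{\ol{I_1}B'}=O_2\cdots O_NK_{I_N}$. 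This proves (i).

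For (ii) I argue the same way, but track isometries. By Theorem \ref{thm:model_space_repres_ker_theta_Blambda}(ii), $m_1=O_1/\sqrt{1-|\theta(\lambda_1)|^2}$ is an isometric multiplier from $K_{I_1}$ onto $\ker T_{\ol\theta B_{\lambda_1}}$. It is therefore isometric on the subspace $\ker T_{\ol{I_1}B'}\subset K_{I_1}$, and by the step above it maps this subspace onto $\ker T_{\ol\theta B}$. By induction, $\prod_{i\ge2}O_i/\sqrt{1-|I_{i-1}(\lambda_i)|^2}$ is an isometric multiplier from $K_{I_N}$ onto $\ker T_{\ol{I_1}B'}$. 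A composition of isometric surjections is again one, so $m$ maps $K_{I_N}$ isometrically onto $\ker T_{\ol\theta B}$.

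For (iii), apply \eqref{eq:Hayashi_repres_modelspace} to $K_{I_N}$; this needs $\dim K_{I_N}\ge1$, which holds. It gives $K_{I_N}=\frac{k_0^{I_N}}{\sqrt{1-|I_N(0)|^2}}K_{z(I_N)_0}$, where the multiplier is outer and isometric. Composing with (ii), the product $m\,k_0^{I_N}/\sqrt{1-|I_N(0)|^2}$ is outer, since each factor lies in $\GG\Hinf$, and it is isometric onto $\ker T_{\ol\theta B}$ from $K_{z\gamma}$ with $\gamma=(I_N)_0$ inner. By uniqueness of Hayashi's representation, this is Hayashi's representation of $\ker T_{\ol\theta B}$.

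The main obstacle is the inductive step in (i) and (ii). We need Corollary \ref{cor:multiplier_between_subkernels_v2} to transport the one-point representation to the subkernel $\ker T_{\ol\theta B_{\lambda_1}B'}$, and we need the isometry to pass to the subspace with the same multiplier. Both are direct consequences of the cited results, so the only real care is in the dimension count ensuring every $I_{n-1}$ is non-constant with $\dim K_{I_{n-1}}>1$.
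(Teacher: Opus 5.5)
Your proposal is correct and follows essentially the paper's own argument: iterate Theorem \ref{thm:model_space_repres_ker_theta_Blambda} with $\mu_j=\lambda_j$, and carry the one-point representation over to the subkernel $\ker T_{\ol{I_1}B_{\lambda_2}\cdots B_{\lambda_N}}$ via Corollary \ref{cor:multiplier_between_subkernels_v2} (equivalently, via the factorisation $\ol\theta B_{\lambda_1}=\ol{I_1}\,\ol{O_1}/O_1$ with $O_1\in\GG\Hinf$), then compose the isometries and finish with \eqref{eq:Hayashi_repres_modelspace}. Your dimension bookkeeping and your check that the final multiplier is outer simply make explicit what the paper leaves as ``repeating this reasoning''.
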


The question of describing the functions $u$ and $\gamma$ in \eqref{eq:Hayashi_representation} for kernels of the form $\ker T_{\ol\theta B}$ where $B$ is a finite Blaschke product with simple zeroes was studied, by a completely different method, in \cite{Kapustin18}.
The simplicity and generality of the results presented here highlight the importance of considering maximal functions and appropriate factorisations of inner factors in our approach.


\section{Toeplitz kernels with generalised Frostman shift symbols}
\label{sec:Tkernels_gen_Frostman_shift_isometric_multipliers}

We now apply the previous results to study a class of Toeplitz kernels that may be seen as a ``perturbation'' of a model space, of the form
\begin{equation}
\label{eq:perturbation_kernel}
\ker T_{\ol\theta - h} \quad , \quad h \in \Hinf, \ ||h||_\infty <1.
\end{equation}

Factorising the symbol as
\begin{equation}
\ol\theta - h = \ol\theta (1 - h\theta)
\end{equation}
where $1-h\theta \in\GG\Hinf$, we see that $\ker T_{\ol\theta - h}$ can be represented in the form
\begin{equation}
\label{eq:model_space_repres_Frostman_shift}
\ker T_{\ol\theta - h} = \frac{1}{1 - h\theta} K_\theta\ .
\end{equation}

One easily sees from \eqref{eq:model_space_repres_Frostman_shift} that the following properties hold.

\begin{prop}
$\ker T_{\ol\theta - h} \neq \zero$ if and only if $\theta$ is non-constant;
$\ker T_{\ol\theta - h}$ is finite dimensional if and only if $\theta$ is a finite Blaschke product.
\end{prop}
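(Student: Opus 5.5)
The plan is to derive both equivalences from the representation \eqref{eq:model_space_repres_Frostman_shift}, $\ker T_{\ol\theta - h} = \frac{1}{1-h\theta} K_\theta$. Since $\|h\|_\infty<1$ and $|\theta|=1$ a.e., we have $\|h\theta\|_\infty<1$. By the Neumann series, $1-h\theta\in\GG\Hinf$. Hence multiplication by $w=(1-h\theta)^{-1}$ is a bounded, invertible linear map on \Hp, with inverse given by multiplication by $1-h\theta\in\Hinf$. The first step is to justify the representation itself. Because $1-h\theta\in\GG\Hinf$, its conjugate lies in $\GG\ol\Hinf$, so we may write
\begin{equation*}
\ol\theta - h = \ol\theta\,(1-h\theta) = \ol\theta\,\frac{\ol{(1-h\theta)}}{1-h\theta}\cdot\frac{1-h\theta}{\ol{(1-h\theta)}}\ .
\end{equation*}
Alternatively, and more directly, $f\in\ker T_{\ol\theta-h}$ iff $\ol\theta(1-h\theta)f\in\Hm$ iff $(1-h\theta)f\in K_\theta$. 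This direct characterisation is the one I will rely on.

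Once $\ker T_{\ol\theta-h}$ is the image of $K_\theta$ under an isomorphism of \Hp, both statements reduce to statements about $K_\theta$. Nontriviality and dimension are preserved, so $\ker T_{\ol\theta-h}\neq\zero$ iff $K_\theta\neq\zero$, and $\dim\ker T_{\ol\theta-h}=\dim K_\theta$.

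I then invoke the standard facts about model spaces. First, $K_\theta=\zero$ iff $\theta$ is constant: if $\theta$ is a unimodular constant then $K_\theta=\Hp\ominus\Hp=\zero$, while if $\theta$ is non-constant then $K_\theta$ contains $\tilde k^\theta_\lambda\neq 0$, or $1-\ol{\theta(0)}\theta\neq0$. Second, $\dim K_\theta<\infty$ iff $\theta$ is a finite Blaschke product, and then the dimension equals the number of zeroes. For the converse direction, if $\theta$ has infinitely many zeroes or a nontrivial singular factor $S$, then $K_\theta$ contains the infinite-dimensional space $K_{B'}$ for infinitely many Blaschke factors, or $K_S$. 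Here $K_S$ is infinite dimensional, for instance because $S^{1/n}$ gives a strictly increasing chain of model spaces.

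The only point needing any care is the last standard fact, that singular inner functions have infinite-dimensional model spaces. I will simply cite it as well known rather than reprove it.
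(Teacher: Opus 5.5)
Your proof is correct and follows the paper's route. The paper simply notes that both properties follow from the representation $\ker T_{\ol\theta-h}=\frac{1}{1-h\theta}K_\theta$ with $1-h\theta\in\GG\Hinf$, which carries triviality and dimension over from $K_\theta$; you do the same via the direct characterisation $f\in\ker T_{\ol\theta-h}\iff(1-h\theta)f\in K_\theta$. One small slip: your first displayed factorisation is false as written, since its right-hand side equals $\ol\theta$ rather than $\ol\theta(1-h\theta)$, but you set it aside in favour of the direct characterisation, so nothing depends on it.
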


\begin{prop}
\label{prop:ker_theta_h}
$\ker T_{\ol\theta - h} = \ker T_{\ol{\theta_h}}$ , where
\begin{equation}
\theta_h = \frac{\theta - \ol h}{1 - h	\theta}
\end{equation}
is a unimodular symbol.
\end{prop}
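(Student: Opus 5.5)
We need to show $\ker T_{\ol\theta - h} = \ker T_{\ol{\theta_h}}$ with $\theta_h = \frac{\theta-\ol h}{1-h\theta}$, and that $\theta_h$ is unimodular. The plan is to compare the two symbols directly and then invoke Corollary \ref{cor:kernels_iguais}.

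\emph{Unimodularity.} On \TT\ we have $|\theta|=1$, so
\begin{equation*}
|\theta - \ol h| = |\theta|\,|1 - \ol h\ol\theta| = |\ol{1 - h\theta}| = |1-h\theta|.
\end{equation*}
Since $\|h\|_\infty<1$, the function $1-h\theta$ is bounded away from $0$, so the quotient is well defined and $|\theta_h|=1$ a.e.

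\emph{Comparing the symbols.} Take conjugates to get
\begin{equation*}
\ol{\theta_h} = \frac{\ol\theta - h}{1-\ol h\ol\theta} = (\ol\theta - h)\,\frac{1}{\ol{1-h\theta}}.
\end{equation*}
Now $1-h\theta\in\GG\Hinf$: it lies in \Hinf, and $\|h\theta\|_\infty<1$, so its inverse is given by a convergent Neumann series in \Hinf. Hence $O := (1-h\theta)^{-1}$ is an invertible, and therefore outer, element of \Hinf, and
\begin{equation*}
\ol{\theta_h} = (\ol\theta - h)\,\ol O.
\end{equation*}

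\emph{Conclusion.} The symbol $g := \ol\theta - h = \ol\theta(1-h\theta)$ is in $\GG\Linf$, since $|g| = |1-h\theta|$ is bounded below. The second part of Corollary \ref{cor:kernels_iguais} then gives $\ker T_g = \ker T_{g\ol O}$, which is exactly $\ker T_{\ol\theta-h} = \ker T_{\ol{\theta_h}}$.

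Alternatively, $T_{\ol O}$ is invertible with inverse $T_{\ol{O^{-1}}}$, and $T_{g\ol O} = T_{\ol O}T_g$ because $\ol O$ is co-analytic; so the two kernels coincide directly.

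The only point needing care is checking that $1-h\theta$ is invertible in \Hinf\ (not merely in \Linf), which is what makes the conjugated factor admissible in Corollary \ref{cor:kernels_iguais}. The Neumann series argument settles this, so I expect no serious obstacle.
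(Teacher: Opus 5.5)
Your proof is correct and is essentially the paper's argument. The paper reads the result off the factorisation $\ol\theta-h=\ol\theta(1-h\theta)$ with $1-h\theta\in\GG\Hinf$: it uses $\ker T_{\ol\theta-h}=(1-h\theta)^{-1}K_\theta$ from \eqref{eq:model_space_repres_Frostman_shift} together with the symbol formula \eqref{eq:simbolo_vindo_de_multiplicador}, which gives the symbol $\ol\theta\,\frac{1-h\theta}{\ol{1-h\theta}}=\ol{\theta_h}$, whereas you compare the two symbols directly via Corollary \ref{cor:kernels_iguais} (the $w=1$ case of the same theorem), which rests on the same key fact and is equally valid.
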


The function $\theta_h$ is called a \emph{generalised Frostman shift of $\theta$} (\cite{CaCarRoss24}).
Recall that, for any $p\in\CC$, $|p|<1$, one can associate to $\theta$ another inner function, $\theta_p$, defined by
\begin{equation}
\theta_p = \frac{\theta - \ol p}{1 - p \theta} \ ,
\end{equation}
called a {\em Frostman shift of $\theta$}.
Thus Toeplitz kernels of the form \eqref{eq:perturbation_kernel} can also be seen as a generalisation of model spaces defined by Frostman shifts.
Those Toeplitz kernels are model spaces if and only if $h$ is a constant, $h=p \in\CC$, $|p|<1$.
In that case, by Theorem \ref{thm:Crofoot_isom_multiplier}, $K_\theta$ can be isometrically multiplied onto $K_{\theta_p}$:
\begin{equation}
\ker T_{\ol\theta -p} = \ker T_{\ol{\theta_p}} = K_{\theta_p} = \frac{\sqrt{1 - |p|^2}}{1-p\theta} K_\theta
\end{equation}
where the isometric multiplier
\begin{equation}
m_p^\theta = \frac{\sqrt{1 - |p|^2}}{1-p\theta}
\end{equation}
is a constant multiple of the multiplier in \eqref{eq:model_space_repres_Frostman_shift} for $h=p$.
A natural question then is to characterise the class of functions $h\in\Hinf$, with $||h||_\infty <1$, such that a constant multiple of that multiplier, i.e., $\frac{C}{1-h\theta}$ with $C\in\CC$, is an isometric multiplier from $K_\theta$ onto $\ker T_{\ol{\theta_h}}$.

By \cite[Proposition 5.8]{CaCarRoss24}, this happens if and only if the truncated Toeplitz operator
\begin{equation}
A^\theta_{1-\frac{|C|^2}{|1-h\theta|^2}}  = P_\theta \left( 1-\frac{|C|^2}{|1-h\theta|^2} \right) P_\theta \quad : K_\theta \to K_\theta
\end{equation}
is the zero operator on $K_\theta$ .
This, in its turn, is equivalent to
\begin{equation}
\label{eq:zero_symbol}
1-\frac{|C|^2}{|1-h\theta|^2} \in \ol{\theta\Hp} + \theta \Hp
\end{equation}
(\cite{Sa07}).
Now we can write
\begin{equation}
1-\frac{|C|^2}{|1-h\theta|^2} = \frac{1 - |C|^2 - |h|^2}{|1-h\theta|^2} + 
\annot{\frac{\theta h}{1-h\theta}}{\in \theta \Hp} + 
\annot{\frac{\ol\theta \ol h}{1-\ol h \ol\theta}}{\in\ol\theta \ol\Hp}\ ,
\end{equation}
so \eqref{eq:zero_symbol} is reduced to imposing that
\begin{equation}
\label{eq:Frostman_shift_mult_isometric_condition}
\frac{1 - |C|^2 - |h|^2}{|1-h\theta|^2} \in \ol{\theta\Hp} + \theta \Hp\ .
\end{equation}

We have thus the following necessary and sufficient condition.

\begin{prop}[{\cite{CaCarRoss24}}]
The multiplier $\frac{C}{1-h\theta}$ , where $C\in\CC$, from $K_\theta$ onto $\ker T_{\ol{\theta_h}}$ is an isometric multiplier if and only if \eqref{eq:Frostman_shift_mult_isometric_condition} holds.
\end{prop}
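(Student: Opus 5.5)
The plan is to assemble this from the chain of equivalences just established. By \eqref{eq:model_space_repres_Frostman_shift} and Proposition \ref{prop:ker_theta_h}, multiplication by $\frac{1}{1-h\theta}$ maps $K_\theta$ onto $\ker T_{\ol{\theta_h}}$. Since $1-h\theta\in\GG\Hinf$, the map $f\mapsto \frac{C}{1-h\theta}f$ is bounded with bounded inverse for $C\neq 0$. For $C=0$ the map cannot be isometric, and condition \eqref{eq:Frostman_shift_mult_isometric_condition} then also fails, as we will see below. So the question is only whether $\|\frac{C}{1-h\theta}f\|_2=\|f\|_2$ for all $f\in K_\theta$.

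\textbf{Step 1: reduce isometry to a truncated Toeplitz operator.} Polarisation shows that isometry is equivalent to
\begin{equation*}
\left\langle \left(1-\frac{|C|^2}{|1-h\theta|^2}\right) f, g\right\rangle = 0 \quad \text{for all } f,g\in K_\theta .
\end{equation*}
This says exactly that the truncated Toeplitz operator $A^\theta_\varphi$ with bounded symbol $\varphi=1-\frac{|C|^2}{|1-h\theta|^2}$ is zero. This is the content of \cite[Proposition 5.8]{CaCarRoss24}, and it can also be checked directly.

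\textbf{Step 2: characterise the zero operator.} By Sarason's theorem (\cite{Sa07}), $A^\theta_\varphi=0$ if and only if $\varphi\in\ol{\theta\Hp}+\theta\Hp$, which is condition \eqref{eq:zero_symbol}.

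\textbf{Step 3: isolate the genuinely nontrivial part of $\varphi$.} We use the decomposition displayed just before the statement. Writing $|1-h\theta|^2=(1-h\theta)(1-\ol h\ol\theta)$, a direct algebraic identity gives
\begin{equation*}
1-\frac{|C|^2}{|1-h\theta|^2}=\frac{1-|C|^2-|h|^2}{|1-h\theta|^2}+\frac{\theta h}{1-h\theta}+\frac{\ol\theta\ol h}{1-\ol h\ol\theta}.
\end{equation*}
This can be verified by putting everything over the common denominator $|1-h\theta|^2$. The second term lies in $\theta\Hinf\subset\theta\Hp$ because $\frac{h}{1-h\theta}\in\Hinf$. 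The third term is its conjugate, so it lies in $\ol{\theta\Hp}$.

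\textbf{Step 4: conclude.} Since $\ol{\theta\Hp}+\theta\Hp$ is a linear space, $\varphi$ belongs to it if and only if the first term does, which is \eqref{eq:Frostman_shift_mult_isometric_condition}.

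\textbf{Remaining checks and the main obstacle.} The delicate point is Step 2. Sarason's characterisation is usually stated for symbols in $L^2$ with the sum $\ol{\theta\Hp}+\theta\Hp$ inside $L^2$. Our symbol is bounded, so we must check that the decomposition in Step 3 stays in the right classes; it does, since every term is bounded. For $C=0$ we have $\varphi=1$, and $A^\theta_1$ is the identity, which is nonzero because $\theta$ is non-constant. So the equivalence still holds in that case, with both sides false.
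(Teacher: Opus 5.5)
Your argument is the paper's own: you reduce isometry to $A^\theta_\varphi=0$ with $\varphi=1-\frac{|C|^2}{|1-h\theta|^2}$ (via \cite[Proposition 5.8]{CaCarRoss24}, or your polarisation), apply Sarason's characterisation $\varphi\in\ol{\theta\Hp}+\theta\Hp$, and split off two terms lying in $\theta\Hp$ and $\ol{\theta\Hp}$. One correction in Step 3: the identity you reproduce from the text before the statement is false as written, and the correct version has minus signs on the last two terms,
\[
1-\frac{|C|^2}{|1-h\theta|^2}=\frac{1-|C|^2-|h|^2}{|1-h\theta|^2}-\frac{\theta h}{1-h\theta}-\frac{\ol\theta\,\ol h}{1-\ol h\,\ol\theta}.
\]
For example, at a point of $\TT$ where $h\theta=\frac12$, with $C=0$, the plus-sign version gives $5$ instead of $1$, so the common-denominator check you propose would fail. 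Since $\ol{\theta\Hp}+\theta\Hp$ is a linear space, Step 4 and the conclusion are unaffected.
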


We thus get an isometric model space representation for $\ker T_{\ol\theta -p\alpha}$, with $p\in\CC$, $|p|<1$ and $\alpha$ inner.

\begin{cor}
\label{cor:Frostman_shift_h=p*alpha}
If $h=p\alpha$ where $p\in\CC$, $|p|<1$ and $\alpha$ is an inner function, then $m_p^{\alpha\theta} = \frac{\sqrt{1-|p|^2}}{1-p\alpha\theta}$ is an isometric multiplier from $K_\theta$ onto $\ker T_{\ol\theta -p\alpha} = \ker T_{\ol{\theta_{p\alpha}}}$.
\end{cor}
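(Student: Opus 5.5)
Two routes are available: verify the criterion \eqref{eq:Frostman_shift_mult_isometric_condition} directly, or reduce to Crofoot's Theorem \ref{thm:Crofoot_isom_multiplier} via subkernels. I would use the direct check as the main argument and the second route as confirmation.

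\emph{Direct check.} Take $h=p\alpha$ and $C=\sqrt{1-|p|^2}$. Since $|\alpha|=1$ on \TT, we have $|h|^2=|p|^2$ a.e. Hence $1-|C|^2-|h|^2 = 1-(1-|p|^2)-|p|^2 = 0$. So the function in \eqref{eq:Frostman_shift_mult_isometric_condition} vanishes identically, and $0$ certainly lies in $\ol{\theta\Hp}+\theta\Hp$. By the preceding proposition from \cite{CaCarRoss24}, $\frac{C}{1-p\alpha\theta}=m_p^{\alpha\theta}$ is then an isometric multiplier from $K_\theta$ onto $\ker T_{\ol{\theta_h}}$.

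\emph{Identifying the target kernel.} By Proposition \ref{prop:ker_theta_h}, $\ker T_{\ol{\theta_h}} = \ker T_{\ol\theta - p\alpha}$. Here $\theta_{p\alpha} = \frac{\theta-\ol p\,\ol\alpha}{1-p\alpha\theta}$ is exactly the generalised Frostman shift with $h=p\alpha$, and \eqref{eq:model_space_repres_Frostman_shift} confirms the multiplier is onto. One point needs care: $\|h\|_\infty=|p|<1$, so $1-p\alpha\theta\in\GG\Hinf$ and the earlier framework applies.

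\emph{Confirmation via subkernels.} Write $\ol\theta - p\alpha = \alpha(\ol{\alpha\theta} - p)$. Then $\ker T_{\ol\theta-p\alpha} = \ker T_{(\ol{\alpha\theta}-p)\alpha}$ is a subkernel of $\ker T_{\ol{\alpha\theta}-p} = K_{(\alpha\theta)_p}$. By Theorem \ref{thm:Crofoot_isom_multiplier}, $m_p^{\alpha\theta}$ maps $K_{\alpha\theta}$ isometrically onto $K_{(\alpha\theta)_p}$. Corollary \ref{cor:multiplier_between_subkernels_v2}, applied with $G=\ol{\alpha\theta}$, $H=\ol{(\alpha\theta)_p}$ and the inner function $\alpha$, gives
\[
m_p^{\alpha\theta}\ker T_{\ol{\alpha\theta}\alpha} = \ker T_{\ol{(\alpha\theta)_p}\alpha},
\]
that is, $m_p^{\alpha\theta}K_\theta = \ker T_{\ol{(\alpha\theta)_p}\alpha}$. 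To see that this equals $\ker T_{\ol\theta-p\alpha}$, note that $\ol{(\alpha\theta)_p}\,\alpha = (\ol{\alpha\theta}-p)\alpha\cdot\frac{1}{1-\ol p\,\ol{\alpha\theta}}$, and the last factor is in $\GG\ol\Hinf$, so Corollary \ref{cor:kernels_iguais} applies. Isometry is inherited by restricting to $K_\theta\subset K_{\alpha\theta}$.

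There is no real obstacle. The only points requiring attention are the identity $|h|=|p|$ and checking that the restriction to subkernels preserves the ``onto'' property, which Proposition \ref{prop:multiplier_between_subkernels} guarantees.
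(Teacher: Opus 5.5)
Your proposal is correct and its main argument is the paper's proof: with $C=\sqrt{1-|p|^2}$ and $|h|=|p|$ a.e.\ on $\TT$, the numerator $1-|C|^2-|h|^2$ in \eqref{eq:Frostman_shift_mult_isometric_condition} vanishes identically, so the criterion from \cite{CaCarRoss24} applies. Your secondary route, via Theorem \ref{thm:Crofoot_isom_multiplier} and the inclusion $K_\theta\subset K_{\alpha\theta}$, is essentially the alternative derivation the paper gives in Remark \ref{rem:ker_theta_palpha}.
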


\begin{proof}
\eqref{eq:Frostman_shift_mult_isometric_condition} obviously holds if $1-|C|^2-|h|^2 = 0$, which is equivalent to $|C|<1$ and $|h|/ \sqrt{1-|C|^2} =1$.
This means that $h/ \sqrt{1-|C|^2} = \alpha$ is an inner function, i.e., $h=p\alpha$ with $|p|=\sqrt{1-|C|^2} <1$.
\end{proof}

Another approach to obtaining a model space representation of the form $\ker T_{\ol\theta -h} = w K_\beta$, where $w$ is an isometric multiplier from $K_\beta$ onto $\ker T_{\ol\theta -h}$, consists in using Theorem \ref{thm:Crofoot_isom_multiplier} and Proposition \ref{prop:multiplier_between_subkernels}, when $\ker T_{\ol\theta - h}$ is contained in a model space as follows.

\begin{prop}
\label{prop:ker_theta_h_contained_model_space}
Let $h\in\Hinf$ with $||h||_\infty <1$.
Then $\ker T_{\ol\theta -h}$ is contained in a model space if and only if
\begin{equation}
\label{eq:K_alpha_infty}
h\in K_\alpha^\infty := \{ f\in\Hinf : \ol\alpha f \in \ol\Hinf \}
\end{equation}
for some inner function $\alpha$.
In that case, defining
\begin{equation}
\label{eq:model_space_containing_Frostman_shift}
\gamma = \frac{\theta \alpha - \alpha \ol h}{1 - h\theta} = \theta_h \alpha \ ,
\end{equation}
we have that $\gamma$ is an inner function and
\begin{equation}
\label{eq:Frostman_shift_equal_gamma_alpha}
\ker T_{\ol\theta -h} = \ker T_{\ol\gamma \alpha} \subset K_\gamma \ .
\end{equation}
\end{prop}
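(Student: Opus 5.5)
The plan is to reduce everything to Proposition \ref{prop:subkernel_modelspace}(ii), Proposition \ref{prop:ker_theta_h} and Corollary \ref{cor:kernels_iguais}, using that $\theta_h$ is unimodular. By Proposition \ref{prop:ker_theta_h}, $\ker T_{\ol\theta - h} = \ker T_{\ol{\theta_h}}$. Since $\|h\|_\infty<1$, we have $1-h\theta\in\GG\Hinf$, so this kernel is nontrivial exactly when $\theta$ is non-constant. If $\theta$ is constant the kernel is trivial, and every statement holds trivially. I would note this degenerate case and then assume $\theta$ non-constant.

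For the ``only if'' direction, suppose $\ker T_{\ol{\theta_h}}\subset K_\beta$ for some inner $\beta$. Because $|\theta_h|=1$, the second part of Proposition \ref{prop:subkernel_modelspace}(ii) applies to the unimodular symbol $g=\ol{\theta_h}$. It gives $\ol{\theta_h}=\ol\beta\alpha_0$ with $\alpha_0$ inner, that is, $\theta_h\,\ol{\alpha}=\ol{\alpha_0}\,\ol{\ol\beta}$ for $\alpha:=\beta\ol{\alpha_0}$. It is cleaner to state this as: $\theta_h\gamma'$ is conjugate-analytic for some inner function. Concretely, set $\alpha:=\beta$, so that $\theta_h\beta=\alpha_0$ is inner.

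From $\theta_h\beta$ inner I recover $h$. Writing $\theta_h(1-h\theta)=\theta-\ol h$ gives $\ol h=\theta-\theta_h(1-h\theta)$. Multiplying by $\beta$,
$$\beta\ol h=\theta\beta-\alpha_0(1-h\theta)\in\Hinf .$$
Taking conjugates, $\ol\beta h\in\ol\Hinf$, so $h\in K^\infty_\beta$ with $\alpha=\beta$.

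For the converse, assume $h\in K^\infty_\alpha$, i.e.\ $\alpha\ol h\in\Hinf$. Then $\gamma=\theta_h\alpha=(\theta\alpha-\alpha\ol h)/(1-h\theta)$ is a quotient of an $\Hinf$ function by an element of $\GG\Hinf$, hence lies in $\Hinf$. Its modulus is $|\theta_h|=1$ a.e., so $\gamma$ is inner. This is the one point needing care: $|\theta_h|=1$ follows from $|\theta-\ol h|=|\ol\theta(1-h\theta)|\cdot$ and the identity $|\theta-\ol h|=|1-h\theta|$ on $\TT$, since $\theta-\ol h=\theta\,\ol{(1-h\theta)}$ there.

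Then $\ol{\theta_h}=\ol\gamma\alpha$ as functions on $\TT$ (because $\ol\alpha\alpha=1$). This gives $\ker T_{\ol\theta-h}=\ker T_{\ol{\theta_h}}=\ker T_{\ol\gamma\alpha}$. Finally, $\ker T_{\ol\gamma\alpha}\subset\ker T_{\ol\gamma}=K_\gamma$, for example by Corollary \ref{cor:kernel.subset.of.another.kernel}, since $\ol\gamma/(\ol\gamma\alpha)=\ol\alpha\in\ol\NN$. This establishes \eqref{eq:Frostman_shift_equal_gamma_alpha} and the ``if'' direction.

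I expect no real obstacle; the main care is in the bookkeeping of conjugates in the ``only if'' direction, so that the inner function produced by Proposition \ref{prop:subkernel_modelspace}(ii) is the right one to put into \eqref{eq:K_alpha_infty}.
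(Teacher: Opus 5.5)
\textbf{Error in the ``only if'' direction.} The mistake is at the conjugation step you flagged. Proposition \ref{prop:subkernel_modelspace}(ii) gives $\ol{\theta_h}=\ol\beta\alpha_0$. Conjugating gives $\theta_h=\beta\ol{\alpha_0}$, i.e.\ $\theta_h\alpha_0=\beta$. What equals $\alpha_0$ is $\ol{\theta_h}\beta$, not $\theta_h\beta$; the latter is $\beta^2\ol{\alpha_0}$.

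As a result, your conclusion $h\in K^\infty_\beta$ is false in general. Take $\theta=z$ and $h=z/2$.
Then $\ker T_{\ol z-z/2}=\spn\{(1-z^2/2)^{-1}\}\subset K_\beta$ with $\beta=\frac{z^2-1/2}{1-z^2/2}$ (cf.\ Remark \ref{rem:ker_theta_palpha}).
Here $\ol{\theta_h}=\ol\beta z$, so $\alpha_0=z$.
But on $\TT$ one has $\beta\ol h=\frac{z^2-1/2}{2z(1-z^2/2)}$, which has a pole at $0$. So $h\notin K^\infty_\beta$, whereas $h\in K^\infty_z$.

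\textbf{Repair.} Use $\alpha_0$ in place of $\beta$. Multiplying $\ol h=\theta-\theta_h(1-h\theta)$ by $\alpha_0$ gives $\alpha_0\ol h=\theta\alpha_0-\beta(1-h\theta)\in\Hinf$, so $h\in K^\infty_{\alpha_0}$. With $\alpha=\alpha_0$, the function $\gamma$ in \eqref{eq:model_space_containing_Frostman_shift} is then exactly $\beta$.

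With this correction, and your ``if'' direction (which is fine), the argument is essentially the paper's. The paper reads both directions off one chain: the Toeplitz kernels containing $\ker T_{\ol{\theta_h}}$ are the $\ker T_{\ol{\theta_h}\ol\alpha}$ with $\alpha$ inner. Such a kernel is a model space iff $\ol{\theta_h}\ol\alpha\in\ol\Hinf$, iff $h\ol\alpha\in\ol\Hinf$, because $1-\ol h\ol\theta\in\GG\ol\Hinf$.

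\textbf{The constant-$\theta$ case.} Your aside that everything holds trivially when $\theta$ is constant is wrong for the ``only if'' direction. The trivial kernel lies in every model space. But $K^\infty_\alpha\subset K_{z\alpha}$, so an $h$ lying in no model space (e.g.\ $h=e^{z}/(2e)$, which has $\|h\|_\infty=1/2$) lies in no $K^\infty_\alpha$. The statement must be read with $\theta$ non-constant, as the paper tacitly assumes.
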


\begin{proof}
The Toeplitz kernels containing $\ker T_{\ol\theta - h} = \ker T_{\ol{\theta_h}}$ are those of the form $\ker T_{\ol{\theta_h}\ol\alpha}$ where $\alpha$ is an inner function (Corollary \ref{cor:subkernel_galpha}).
We have that $\ker T_{\ol{\theta_h}\ol\alpha}$ is a model space if and only if
\begin{equation}
\ol{\theta_h}\ol\alpha \in\ol\Hinf \iff (\ol\theta - h) \ol\alpha \in\ol\Hinf \iff h\ol\alpha \in\ol\Hinf \ .
\end{equation}
In that case, since $|\theta_h \alpha|=1$, we have that $\gamma = \theta_h \alpha$ is an inner function and \eqref{eq:Frostman_shift_equal_gamma_alpha} holds.
\end{proof}

Note that if $\alpha$, in Proposition \ref{prop:ker_theta_h_contained_model_space}, is a finite Blaschke product, then we can use the results from Section \ref{sec:multipliers_subkernels} to obtain an isometric model space representation for $\ker T_{\ol\theta -h}$.
For a general inner function, we have the results that follow.

\begin{prop}
\label{prop:isometric_multiplier_Frostman_shift}
Let $h$ satisfy \eqref{eq:K_alpha_infty} with $||h||_\infty < 1$, and let $\gamma$ be defined by \eqref{eq:model_space_containing_Frostman_shift}.
For any $p\in\CC$, $|p|<1$, define
\begin{equation}
m_p^\gamma = \frac{\sqrt{1-|p|^2}}{1-p\gamma}
\quad , \quad
\gamma_p = \frac{\gamma - \ol p}{1 - p\gamma}\ .
\end{equation}
Then $m_p^\gamma$ is an isometric multiplier from $\ker T_{\ol\theta -h}$ onto $\ker T_{\ol{\gamma_p} \alpha}$.
\end{prop}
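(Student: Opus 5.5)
By Proposition \ref{prop:ker_theta_h_contained_model_space}, $\ker T_{\ol\theta - h} = \ker T_{\ol\gamma\alpha}$ with $\gamma = \theta_h\alpha$ inner, so $\ker T_{\ol\theta-h}$ is a subkernel of the Toeplitz operator $T_{\ol\gamma}$, whose kernel is the model space $K_\gamma$. The plan is to realise the multiplier $m_p^\gamma$ first on $K_\gamma$, using Crofoot's theorem, and then pass to the subkernel. For this I would use Corollary \ref{cor:multiplier_between_subkernels_v2} for surjectivity and the containment of spaces for the isometry.

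\emph{Step 1: the multiplier on $K_\gamma$.} By Theorem \ref{thm:Crofoot_isom_multiplier}, applied with $\theta_1=\gamma$ and the Frostman shift $\gamma_p$, the function $m_p^\gamma$ is an isometric multiplier from $K_\gamma$ onto $K_{\gamma_p}$, that is,
\begin{equation*}
m_p^\gamma \ker T_{\ol\gamma} = \ker T_{\ol{\gamma_p}} .
\end{equation*}
(The statement of Theorem \ref{thm:Crofoot_isom_multiplier} writes $|p|=1$, but the intended condition is $|p|<1$, which is the usual Frostman shift setting and the one used in \eqref{eq:Hayashi_repres_modelspace}.)

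\emph{Step 2: surjectivity on the subkernel.} Apply Corollary \ref{cor:multiplier_between_subkernels_v2} with $G=\ol\gamma$, $H=\ol{\gamma_p}$, $w=m_p^\gamma$ and the inner function $\alpha$. This gives
\begin{equation*}
m_p^\gamma \ker T_{\ol\gamma\alpha} = \ker T_{\ol{\gamma_p}\alpha} .
\end{equation*}
Since $\ker T_{\ol\gamma\alpha}=\ker T_{\ol\theta-h}$, this is exactly the claimed onto statement.

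\emph{Step 3: isometry.} Because $\ker T_{\ol\theta-h}\subset K_\gamma$ and multiplication by $m_p^\gamma$ preserves the $L^2$ norm on all of $K_\gamma$, it preserves the norm on the subspace as well. Hence $m_p^\gamma$ is an isometric multiplier from $\ker T_{\ol\theta-h}$ onto $\ker T_{\ol{\gamma_p}\alpha}$. This is the same restriction argument used in the proof of Theorem \ref{thm:model_space_repres_ker_theta_Blambda}(ii).

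The proof is essentially a composition of earlier results. The only points needing care are confirming $\gamma$ is inner, which Proposition \ref{prop:ker_theta_h_contained_model_space} provides, and the $|p|<1$ reading of Crofoot's theorem noted in Step 1. Provided Corollary \ref{cor:multiplier_between_subkernels_v2} applies with a general inner $\alpha$, which it does as stated, I expect no real obstacle.
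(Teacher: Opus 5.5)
Your proposal is correct and follows the same route as the paper. Crofoot's theorem gives $m_p^\gamma K_\gamma = K_{\gamma_p}$ isometrically. Corollary \ref{cor:multiplier_between_subkernels_v2} then transfers this to $m_p^\gamma \ker T_{\ol\gamma\alpha} = \ker T_{\ol{\gamma_p}\alpha}$, and the isometry follows by restriction since $\ker T_{\ol\theta-h}\subset K_\gamma$. You are also right that the condition in Theorem \ref{thm:Crofoot_isom_multiplier} should read $|p|<1$.
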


\begin{proof}
(Figure \ref{fig:diagram})
We have that $m_p^\gamma$ is an isometric multiplier from $K_\gamma$ onto $K_{\gamma_p}$, so from \eqref{eq:Frostman_shift_equal_gamma_alpha} and Corollary \ref{cor:multiplier_between_subkernels_v2} we have that
\begin{equation}
m_p^\gamma \ker T_{\ol\theta -h} = m_p^\gamma \ker T_{\ol\gamma \alpha} = \ker T_{\ol{\gamma_p} \alpha}\ ,
\end{equation}
where $m_p^\gamma$ is an isometric multiplier because $\ker T_{\ol\theta -h} \subset K_\gamma$ and $\ker T_{\ol{\gamma_p} \alpha} \subset K_{\gamma_p}$.
\end{proof}

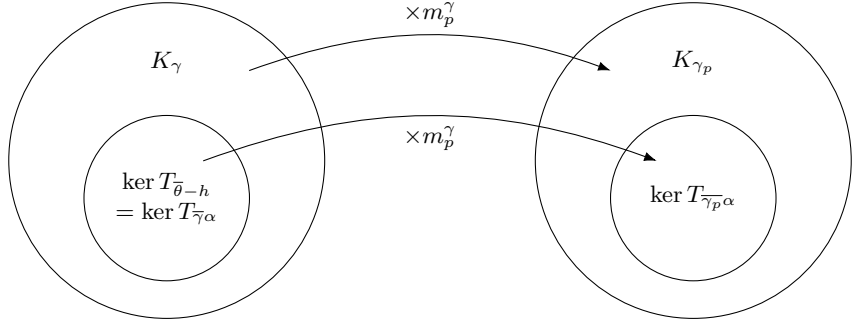
\begin{figure}[H]
\centering
\begin{tikzpicture}[
  >=Latex,
  font=\small,
  big/.style={circle, draw, minimum size=4.2cm},
  small/.style={circle, draw, minimum size=2.2cm},
]

\node[big] (L) at (0,0) {};
\node[big] (R) at (7,0) {};

\node[small] (Ls) at (0,-0.5) {$\begin{array}{c}
\ker T_{\overline{\theta}-h} \\
= \ker T_{\overline{\gamma}\alpha}
\end{array}$};
\node[small] (Rs) at (7,-0.5) {$\ker T_{\overline{\gamma_p}\alpha}$};

\node at (L.north) [yshift=-0.8cm] {$K_\gamma$};
\node at (R.north) [yshift=-0.8cm] {$K_{\gamma_p}$};

\draw[->, bend left=20, shorten >=4pt, shorten <=4pt]
  ([shift={(50:1.5cm)}]L.center) to
  node[above] {$\times m_p^\gamma$}
  ([shift={(130:1.5cm)}]R.center);

\draw[->, bend left=20, shorten >=3pt, shorten <=3pt]
  ([shift={(50:0.6cm)}]Ls.center) to
  node[below] {$\times m_p^\gamma$}
  ([shift={(130:0.6cm)}]Rs.center);

\end{tikzpicture}
\caption{$\ker T_{\ol\theta -h}$ contained in $K_\gamma$, and the multiplier $m_p^\gamma$ acting on both spaces}
\label{fig:diagram}
\end{figure}

\begin{cor}
\label{cor:Frostman_shift_isom_model_space_repres}
With the same assumptions as in Proposition \ref{prop:isometric_multiplier_Frostman_shift}, if $\alpha \preceq \gamma_p$ then
\begin{equation}
\ker T_{\ol\theta -h} = (m_p^\gamma)^{-1} K_{\gamma_p / \alpha}
\end{equation}
is an isometric model space representation of $\ker T_{\ol\theta -h}$.
\end{cor}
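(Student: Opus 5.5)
By Proposition \ref{prop:isometric_multiplier_Frostman_shift}, the multiplication by $m_p^\gamma$ is an isometry from $\ker T_{\ol\theta -h}$ onto $\ker T_{\ol{\gamma_p}\alpha}$. The plan is to identify $\ker T_{\ol{\gamma_p}\alpha}$ as a model space when $\alpha \preceq \gamma_p$, and then to invert the multiplier.

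First, write $\gamma_p = \alpha \beta$ with $\beta = \gamma_p/\alpha$. This $\beta$ is inner: $\gamma_p$ is inner, being a Frostman shift of the inner function $\gamma$ with $|p|<1$, and the quotient is in $\Hinf$ by the divisibility hypothesis. Then $\ol{\gamma_p}\alpha = \ol\beta\,\ol\alpha\alpha = \ol\beta$ on $\TT$, so $\ker T_{\ol{\gamma_p}\alpha} = \ker T_{\ol\beta} = K_{\gamma_p/\alpha}$. Combining this with Proposition \ref{prop:isometric_multiplier_Frostman_shift} gives
\begin{equation*}
m_p^\gamma \ker T_{\ol\theta -h} = K_{\gamma_p/\alpha}.
\end{equation*}

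Second, invert the multiplier. We have $(m_p^\gamma)^{-1} = (1-p\gamma)/\sqrt{1-|p|^2}$, which lies in $\Hinf$. Moreover $m_p^\gamma$ itself lies in $\GG\Hinf$, because $|p\gamma|\le |p|<1$. Hence multiplication by $m_p^\gamma$ is a bijection on $\Hp$, and the displayed equality can be rewritten as $\ker T_{\ol\theta -h} = (m_p^\gamma)^{-1} K_{\gamma_p/\alpha}$.

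Finally, check the isometry. On $\TT$ we have $|(m_p^\gamma)^{-1} f| = |f|/|m_p^\gamma|$. If $f = m_p^\gamma g$ with $g\in\ker T_{\ol\theta-h}$, then $\|(m_p^\gamma)^{-1}f\|_2 = \|g\|_2 = \|m_p^\gamma g\|_2 = \|f\|_2$, using that $m_p^\gamma$ is isometric on $\ker T_{\ol\theta -h}$. So the inverse of an isometric bijection is again isometric, and the representation is an isometric model space representation.

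None of these steps is a real obstacle. The only points needing care are the innerness of $\gamma_p/\alpha$ and the identification of $\ker T_{\ol\beta}$ with $K_\beta$.
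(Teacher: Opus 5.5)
Your proposal is correct and follows the route the paper intends. The paper states this corollary without a separate proof, as an immediate consequence of Proposition \ref{prop:isometric_multiplier_Frostman_shift}: when $\alpha \preceq \gamma_p$ one has $\ker T_{\ol{\gamma_p}\alpha} = K_{\gamma_p\ol\alpha}$, and inverting the isometric bijection given by multiplication by $m_p^\gamma$ yields the representation, which is exactly the argument you give.
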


We have the following conditions for $\alpha$ to divide $\gamma_p$.

\begin{prop}
\label{prop:alpha_divides_gamma_p}
Let $p\in\CC$, $|p|<1$, $h\in K_\alpha^\infty$ for some inner function $\alpha$ and $\gamma$ be defined by \eqref{eq:model_space_containing_Frostman_shift}.
Then $\alpha \preceq \gamma_p$ if and only if
\begin{equation}
\label{eq:gamma_p_alpha_is_model_space}
\ol h + \ol p\, \ol\alpha \, (1 - h\theta) \in \Hinf.
\end{equation}
\end{prop}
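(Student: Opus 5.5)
The plan is to translate the divisibility $\alpha \preceq \gamma_p$ into the statement that a certain unimodular function lies in \Hinf, and then to strip off invertible factors until only \eqref{eq:gamma_p_alpha_is_model_space} remains.

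First, by Proposition \ref{prop:ker_theta_h_contained_model_space}, $\gamma = \theta_h\alpha$ is inner. Hence $\gamma_p$ is inner, since it is a Frostman shift of an inner function with $|p|<1$. For inner functions, $\alpha \preceq \gamma_p$ means exactly that $\ol\alpha\gamma_p \in \Hinf$. Indeed, $\ol\alpha\gamma_p$ is unimodular on \TT, so if it lies in \Hinf\ it is automatically inner. So the goal is to show
\begin{equation*}
\ol\alpha\gamma_p \in \Hinf \iff \ol h + \ol p\,\ol\alpha(1-h\theta)\in\Hinf .
\end{equation*}

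Next, write
\begin{equation*}
\ol\alpha\gamma_p = \frac{\ol\alpha\gamma - \ol p\,\ol\alpha}{1-p\gamma}.
\end{equation*}
Since $\gamma$ is inner and $|p|<1$, we have $1-p\gamma\in\GG\Hinf$. Hence the condition is equivalent to
\begin{equation*}
\ol\alpha\gamma - \ol p\,\ol\alpha \in \Hinf .
\end{equation*}
Now $\ol\alpha\gamma = \theta_h = \frac{\theta-\ol h}{1-h\theta}$. Because $\|h\|_\infty<1$, we have $1-h\theta\in\GG\Hinf$, so we may multiply by it without changing membership in \Hinf. The condition then becomes
\begin{equation*}
\theta - \ol h - \ol p\,\ol\alpha(1-h\theta)\in\Hinf .
\end{equation*}
Since $\theta\in\Hinf$, we can drop it and change the overall sign. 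This gives exactly \eqref{eq:gamma_p_alpha_is_model_space}.

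Every step above is a reversible equivalence, so both directions follow at once. The only points needing care are the invertibility in \Hinf\ of $1-p\gamma$ and of $1-h\theta$, and the fact that a unimodular \Hinf\ function is inner. The first two follow from $|p|<1$, $|\gamma|=1$ a.e., and $\|h\theta\|_\infty<1$ (Neumann series). I do not expect a genuine obstacle. The mild subtlety is recognising that the divisibility condition is simply $\ol\alpha\gamma_p\in\Hinf$, with no separate inner-ness check required. The hypothesis $h\in K^\infty_\alpha$ is used only to know, via Proposition \ref{prop:ker_theta_h_contained_model_space}, that $\gamma$ is inner.
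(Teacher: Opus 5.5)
Your proof is correct and follows the same chain of equivalences as the paper: remove the invertible factor $1-p\gamma$, substitute $\ol\alpha\gamma=\theta_h$, multiply through by $1-h\theta\in\GG\Hinf$, and discard the term $\theta$. You also state explicitly that $1-h\theta$ is invertible, which follows from the section's standing assumption $\|h\|_\infty<1$; the paper uses this fact without comment.
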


\begin{proof}
Since $1-p\gamma \in\GG\Hinf$, we have that
\begin{align}
\alpha \preceq \gamma_p
	& \iff 
	\gamma_p \ol\alpha \in\Hinf	 \iff (\gamma - \ol p) \ol\alpha \in\Hinf\\
& \iff 
	\theta - \ol h - \ol p\, \ol\alpha + h\,\ol\alpha\, \ol p\, \theta \in\Hinf \\
& \iff 
	\ol h + \ol p\, \ol\alpha \, (1 - h\theta) \in \Hinf \ .
\end{align}
\end{proof}

As a consequence of Proposition \ref{prop:alpha_divides_gamma_p} and Corollary \ref{cor:Frostman_shift_isom_model_space_repres}, we have the following.

\begin{cor}
\label{cor:constant_h+palpha}
If $\alpha \preceq \theta$ and $h = C - p\alpha$, with $C\in\CC$ and $||h||_\infty < 1$, then $\alpha \preceq \gamma_p$ and
\begin{equation}
\ker T_{\ol\theta -h} = (m_p^\gamma)^{-1} K_{\gamma_p \ol\alpha}
\end{equation}
is an isometric model space representation of $\ker T_{\ol\theta -h}$.
\end{cor}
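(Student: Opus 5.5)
The plan is to check condition \eqref{eq:gamma_p_alpha_is_model_space} of Proposition \ref{prop:alpha_divides_gamma_p} directly, and then apply Corollary \ref{cor:Frostman_shift_isom_model_space_repres}.

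First we need that $h\in K_\alpha^\infty$, so that $\gamma$ is defined and is inner by Proposition \ref{prop:ker_theta_h_contained_model_space}. With $h = C - p\alpha$ we have $h\in\Hinf$ and
\begin{equation*}
\ol\alpha h = C\ol\alpha - p ,
\end{equation*}
which lies in $\ol\Hinf$. Note that $|p|<1$ is implicit here, since $m_p^\gamma$ and $\gamma_p$ are only defined for such $p$. Together with $||h||_\infty<1$, this puts us in the setting of Proposition \ref{prop:isometric_multiplier_Frostman_shift}.

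Next we verify \eqref{eq:gamma_p_alpha_is_model_space}. Substituting $\ol h = \ol C - \ol p\,\ol\alpha$ gives
\begin{equation*}
\ol h + \ol p\,\ol\alpha\,(1-h\theta) = \ol C - \ol p\,\ol\alpha + \ol p\,\ol\alpha - \ol p\,\ol\alpha h\theta = \ol C - \ol p\, h\,\ol\alpha\theta .
\end{equation*}
Since $\alpha\preceq\theta$, the function $\ol\alpha\theta = \theta/\alpha$ is inner and so belongs to $\Hinf$. Hence $\ol p\,h\,\ol\alpha\theta\in\Hinf$, and the whole expression is in $\Hinf$. Proposition \ref{prop:alpha_divides_gamma_p} then yields $\alpha\preceq\gamma_p$.

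Finally, Corollary \ref{cor:Frostman_shift_isom_model_space_repres} gives
\begin{equation*}
\ker T_{\ol\theta-h} = (m_p^\gamma)^{-1}K_{\gamma_p/\alpha} ,
\end{equation*}
and $\gamma_p/\alpha = \gamma_p\ol\alpha$ because $|\alpha|=1$ on $\TT$. This representation is isometric: $m_p^\gamma$ maps $\ker T_{\ol\theta-h}$ isometrically onto $\ker T_{\ol{\gamma_p}\alpha} = K_{\gamma_p\ol\alpha}$, and therefore its inverse maps $K_{\gamma_p\ol\alpha}$ isometrically onto $\ker T_{\ol\theta-h}$.

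None of these steps is a real obstacle. The only point needing care is the algebraic simplification above, in particular seeing that the term $-\ol p\,\ol\alpha\,h\theta$ is analytic, which is exactly where the hypothesis $\alpha\preceq\theta$ is used.
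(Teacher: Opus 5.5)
Your proposal is correct and follows the paper's proof. You check $h\in K_\alpha^\infty$ via $\ol\alpha h = C\ol\alpha - p$, then verify \eqref{eq:gamma_p_alpha_is_model_space}; your form $\ol C - \ol p\, h\,\ol\alpha\theta$ expands to the paper's $\ol C - C\ol p\theta\ol\alpha + |p|^2\theta$. You then conclude via Proposition \ref{prop:alpha_divides_gamma_p} and Corollary \ref{cor:Frostman_shift_isom_model_space_repres}, exactly as the paper does.
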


\begin{proof}
We have that $\ol\alpha h = C \ol\alpha - p \in\ol\Hinf$, thus $h\in K_\alpha^\infty$.
Moreover,
\begin{equation}
\ol h + \ol p \ol\alpha (1-h\theta) = \ol C - C \ol p \theta \ol\alpha + |p|^2 \theta \in\Hinf,
\end{equation}
therefore, from Proposition \ref{prop:alpha_divides_gamma_p}, we have that $\alpha \preceq \gamma_p$, and the result follows from Corollary \ref{cor:Frostman_shift_isom_model_space_repres}.
\end{proof}

\begin{rem}
\label{rem:ker_theta_palpha}
The case where $h=p\alpha$, considered in Corollary \ref{cor:Frostman_shift_h=p*alpha}, is a particular case where $h\in K_\alpha^\infty$.
We see that, in that case, $\gamma = \frac{\theta \alpha - \ol p}{1 - p \theta \alpha} = (\theta\alpha)_p$ is a Frostman shift of $\theta\alpha$.
From Propositions \ref{prop:ker_theta_h} and \ref{prop:ker_theta_h_contained_model_space} and Theorem \ref{thm:Crofoot_isom_multiplier}, we have
\begin{equation}
\label{eq:rem:ker_theta_palpha}
\ker T_{\ol\theta - p\alpha} = \ker T_{\ol{\theta_{p\alpha}}} \subset 
K_\gamma = K_{(\theta\alpha)_p} = \frac{\sqrt{1-|p|^2}}{1-p\theta\alpha} K_{\theta\alpha} = m_p^{\theta\alpha} K_{\theta\alpha}\ ,
\end{equation}
where $m_p^{\theta\alpha}$ is an isometric multiplier from $K_{\theta\alpha}$ onto $K_{(\theta\alpha)_p}$.
Since $\ker T_{\ol\theta - p\alpha} = m_p^{\theta\alpha} K_\theta$, with $K_\theta \subset K_{\theta\alpha}$, one can also derive the result of Corollary \ref{cor:Frostman_shift_h=p*alpha} from \eqref{eq:rem:ker_theta_palpha}.
\end{rem}

\begin{ex}
Let $\theta$ be an inner function and consider the affine perturbation $h=a+bz$, with $a,b\in\CC$ such that $|a|+|b|<1$, so that $||h||_\infty <1$.
Then, with $\alpha = z$, since $\ol z h = a \ol z + b \in \ol\Hinf$, we have that $h\in K_z^\infty$, and from Proposition \ref{prop:ker_theta_h_contained_model_space}
\begin{equation}
\ker T_{\ol \theta - a - bz} = \ker T_{\ol\gamma z} \subset K_\gamma \,
\end{equation}
with $\gamma = \theta_h z = \frac{\theta z - \ol a z - \ol b}{1 - (a+bz) \theta}$.

On the one hand, from Theorem \ref{thm:model_space_repres_ker_theta_Blambda},
with
\begin{equation}
\beta = \frac{\tilde k^\gamma_0}{k^\gamma_0}
\quad , \quad
w = \frac{1 - \ol{\gamma(0)} \gamma}{\sqrt{1 - |\gamma(0)|^2}}\ ,
\end{equation}
\begin{equation} \label{eq:ex:affine_gamma_z}
\ker T_{\ol \theta - a - bz}
= \ker T_{\ol\gamma z}
= w K_\beta
= \left( m_{p}^\gamma \right)^{-1} K_\beta
\quad , \quad
\text{with } p = \ol{\gamma(0)} \ ,
\end{equation}
is an isometric model space representation of $\ker T_{\ol \theta - a - bz}$.

On the other hand, $z \preceq \gamma_p \iff \gamma_p(0) = 0 \iff p = \ol{\gamma(0)}$.
So, by Corollary \ref{cor:Frostman_shift_isom_model_space_repres} with $p = \ol{\gamma(0)}$,
\begin{equation} \label{eq:ex:affine_frostman_shift}
\ker T_{\ol \theta - a - bz}
= \left( m_{p}^\gamma \right)^{-1} K_{\gamma_p \ol z}
= \left( m_{p}^\gamma \right)^{-1} K_\beta \ ,
\end{equation}
and we obtain the same isometric model space representation of $\ker T_{\ol\theta -a-bz}$.
\end{ex}

\begin{ex}
Consider the singular inner functions $e_k(z) := \exp \left( k \frac{z+1}{z-1} \right)$, with $k > 0$.
Let $\theta = e_1$ and $\alpha = e_k$, for some $0 < k \leq 1$, and let $h=a+b e_k$, with $a,b\in\CC$ such that $|a|+|b|<1$, so that $||h||_\infty <1$.
Then, $\ol{e_k} h = a \ol{e_k} + b \in \ol\Hinf$, and by Proposition \ref{prop:ker_theta_h_contained_model_space}, defining
\begin{equation}
\gamma = (e_1)_h e_k = \frac{e_{1+k} - \ol a e_k - \ol b}{1 - a e_1 - b e_{1+k}} \ ,
\end{equation}
we have
\begin{equation}
\ker T_{\ol{e_1} - a - b e_k} = \ker T_{\ol\gamma e_k} \subset k_\gamma \ .
\end{equation}
From Proposition \ref{prop:isometric_multiplier_Frostman_shift}, $\left( m_{p}^\gamma \right)^{-1}$ is an isometric multiplier from $\ker T_{\ol{\gamma_p} e_k}$ onto $\ker T_{\ol{e_1} - a - b e_k}$, for any $p\in\CC$, $|p|<1$.
And from Proposition \ref{prop:alpha_divides_gamma_p}, we have that $e_k \preceq \gamma_p$, i.e. $\ker T_{\ol{\gamma_p} e_k}$ is a model space, if and only if
\begin{align}
\ol h + \ol p\, \ol\alpha \, (1 - h\theta) \in \Hinf 
& \iff \ol a + \ol b \ol{e_k} (1 - a e_1 - b e_{k+1}) \in\Hinf \\
& \iff (\ol b + \ol p) \ol{e_k} \in \Hinf \\
& \iff p = -b \ .
\end{align}
Thus, choosing $p=-b$, from Corollary \ref{cor:Frostman_shift_isom_model_space_repres}, we have that
\begin{equation}
\ker T_{\ol{e_1} - a - b e_k}
= \left( m_{-b}^\gamma \right)^{-1} K_{\gamma_{{}_{-b}} \ol{e_k}}
\end{equation}
is a model space representation of $\ker T_{\ol{e_1} - a - b e_k}$, where
\begin{equation}
\gamma_{{}_{-b}}\ \ol{e_k} = \frac{\gamma + \ol b}{1 + b\gamma} \ol{e_k} = \frac{e_1 - g}{1 - \ol g e_1}
\quad , \text{ with} \quad
g = \frac{\ol a + a \ol b e_{1-k}}{1 - |b|^2} \ ,
\end{equation}
and
\begin{equation}
\left( m_{-b}^\gamma \right)^{-1} = \frac{1+b\gamma}{\sqrt{1-|b|^2}} = \sqrt{1 - |b|^2} \frac{1 - \ol g e_1}{1 - a e_1 - b e_{1+k}} \ .
\end{equation}
\end{ex}

\section*{Acknowledgements}
Research partially funded by Fundação para a Ciência e Tecnologia (FCT), Portugal, through grant No. UID/4459/2025.
Also, the work of the second author was supported by Fundação para a Ciência e Tecnologia (FCT), Portugal, through CAMGSD PhD fellowship UI/BD/153700/2022.

\section*{Author contributions}
All authors contributed to the research in this manu-
script, and they read and approved the final manuscript.

\section*{Data Availibility}
Data sharing is not applicable to this article as no datasets
were generated or analysed during the current study.

\section*{Competing Interests}
The authors have no relevant financial or non-financial interests to disclose.

\bibliographystyle{plain}
\bibliography{references}

\end{document}